\newtheorem{thm}{Theorem}[section]
\newtheorem{cor}[thm]{Corollary}
\newtheorem{lemma}[thm]{Lemma}
\newtheorem{prop}[thm]{Proposition}
\newtheorem{defn}[thm]{Definition}
\theoremstyle{remark}
\theoremstyle{definition}
\newtheorem{rmk}[thm]{Remark}
\newtheorem{exa}[thm]{Example}
\newtheorem{notation}[thm]{Notation}
\numberwithin{equation}{thm}
\def\beq{\begin{equation}}
\def\eeq{\end{equation}}
\def\beqn{\begin{equation*}}
\def\eeqn{\end{equation*}}
\def\ben{\begin{enumerate}}
\def\een{\end{enumerate}}
\def\crash#1{}
\def\C{{\mathbb C}}
\def\N{{\mathbb N}}
\def\R{{\mathbb R}}
\def\l{\left}
\def\r{\right}
\def\Ker{{\rm Ker \,}}
\def\ie{\emph{i.e.}~}
\def\cA{{\mathcal A}}
\def\cB{{\mathcal B}}
\def\cE{{\mathcal E}}
\def\cI{{\mathcal I}}
\def\cN{{\mathcal N}}
\def\cV{{\mathcal V}}
\def\cW{{\mathcal W}}
\def\sP{{\mathscr P}}
\def\wtilde{\widetilde}
\def\a{\alpha}
\def\be{\beta}
\def\la{\lambda}
\def\ol{\overline}
\def\limpro{\mathop{\lim\limits_{\displaystyle\leftarrow}}}
\def\limind{\mathop{\lim\limits_{\displaystyle\rightarrow}}}
\def\then{\Rightarrow}
\def\lt{\langle}
\def\gt{\rangle}
\def\void{{\rm \varnothing}}
\def\bBorn{\mathbf{Born}}
\def\bLoc{\mathbf{Loc}}
\def\uborn{{\rm uborn\,}}
\def\Ban{{\rm Ban\,}}
\author{Federico Bambozzi}
\title{Closed graph theorems for bornological spaces}
\thanks{
        The author acknowledges the University of Regensburg with the support of the DFG funded CRC 1085 "Higher Invariants. Interactions between Arithmetic Geometry and Global Analysis". }
\begin{document}

\begin{abstract}
	The aim of this paper is that of discussing Closed Graph Theorems for bornological vector spaces in a way which is accessible to non-experts. We will see how to easily adapt classical arguments of functional analysis over $\R$ and $\C$ to deduce Closed Graph Theorems for bornological vector spaces over any complete, non-trivially valued field, hence encompassing the non-Archimedean case too. We will end this survey by discussing some applications. In particular, we will prove de Wilde's Theorem for non-Archimedean locally convex spaces and then deduce some results about the automatic boundedness of algebra morphisms for a class of bornological algebras of interest in analytic geometry, both Archimedean (complex analytic geometry) and non-Archimedean.
\end{abstract}

\maketitle

%\tableofcontents

\section*{Introduction}

This paper aims to discuss the Closed Graph Theorems for bornological vector spaces in a way which is accessible to non-specialists and to fill a gap in literature about the non-Archimedean side of the theory at the same time. In functional analysis over $\R$ or $\C$ bornological vector spaces have been used since long time ago, without becoming a mainstream tool. It is probably for this reason that bornological vector spaces over non-Archimedean valued fields were rarely considered. Over the last years, for several reasons, bornological vector spaces have drawn new attentions: see for example \cite{Bam}, \cite{BaBe}, \cite{BaBeKr}, \cite{CAM} and \cite{M}. These new developments involve the non-Archimedean side of the theory too and that is why it needs adequate foundations. Among the classical books on the theory of bornological vector spaces, the only one considering also non-Archimedean base fields, in a unified fashion with the Archimedean case, is \cite{H2}. But that book cannot cover all the theory, and in particular it lacks of a unified treatment of the Closed Graph Theorems. This work comes out from the author's need for a reference for these theorems and also in the hope that in the future bornological vector spaces will gain more popularity and that this work may be useful for others. 

The Closed Graph Theorem for Banach spaces over $\R$ and $\C$ is one of the most celebrated classical theorems of functional analysis. Over the years it has been generalized in many ways, for example to Fr\'echet and LF spaces as a consequence of the Open Mapping Theorems. Although it is a classical argument that the Closed Graph Theorem can be deduced from the Open Mapping Theorem, people have understood that the Closed Graph Theorem can be proved in an independent way, with argumentations that also work when the Open Mapping Theorem fails. The two most famous examples of this kind of proofs are given in \cite{POPA} by Popa and \cite{DW} by de Wilde. In particular, de Wilde's Theorem is probably the most general Closed Graph Theorem for locally convex spaces, and states the following:

\begin{thm} (De Wilde's Closed Graph Theorem)\\
	If $E$ is an ultrabornological locally convex space and
	$F$ is a webbed locally convex space over $\R$ or $\C$, then every linear map $f: E \to F$ which has bornologically closed graph with respect to the convex bornologies on $E$ and $F$ that are generated by all bounded Banach disks in $E$ and in $F$, respectively, is continuous even if regarded as a mapping into the ultrabornologification $F_\uborn$ of $F$.
\end{thm}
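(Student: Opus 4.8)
The plan is to split De Wilde's Theorem into two independent pieces: a reduction, using the ultrabornological hypothesis on $E$, to the case in which the source is a Banach space, followed by the classical web-descent argument applied to maps out of a Banach space, carried out so that every estimate survives over an arbitrary complete non-trivially valued field $k$. First I would translate the conclusion into bornological language: continuity of $f$ into $F_\uborn$ is equivalent to boundedness of the induced map of bornological vector spaces, where $E$ carries its bornology of bounded Banach disks and $F$ carries the (complete, convex) bornology generated by its bounded Banach disks. Since $E$ is ultrabornological, as a bornological space it is the colimit $\colim_B E_B$ of the Banach spaces $E_B$ attached to the bounded Banach disks $B\subseteq E$, and a linear map out of a colimit is bounded exactly when each restriction $f|_{E_B}$ is. Because the graph of $f$ is bornologically closed and the bornology of $E_B$ is finer than that of $E$, each $f|_{E_B}$ again has bornologically closed graph. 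This reduces the statement to: a linear map $f\colon E\to F$ from a $k$-Banach space $E$ to a webbed space $F$, with bornologically closed graph, carries the unit ball $B_E$ into a single bounded Banach disk of $F$.

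For the reduced statement I would exploit the web $\{C_{n_1,\dots,n_k}\}$ of $F$, whose members are bounded absolutely $k$-convex (Banach) disks. Since $F=\bigcup_{n_1}C_{n_1}$ we have $E=\bigcup_{n_1}f^{-1}(C_{n_1})$, and $E$, being a $k$-Banach space, is a Baire space; hence some $f^{-1}(C_{n_1})$ is non-meagre, and absolute $k$-convexity of $C_{n_1}$ forces $\overline{f^{-1}(C_{n_1})}$ to be a neighbourhood of $0$. Iterating this Baire argument inside the selected layer produces indices $n_2,n_3,\dots$ and scalars $\lambda_k$ in the value group such that each $\overline{f^{-1}(\lambda_k C_{n_1,\dots,n_k})}$ is a neighbourhood of $0$. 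This yields a strand of the web along which the preimages are \emph{almost} neighbourhoods.

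The crux is to upgrade these closures to genuine membership, i.e. to show that $f$ maps some fixed neighbourhood of $0$ into a single dilate of a web set. Here I would invoke the defining summability property of a strand (the completing-web condition) together with the bornologically closed graph. Given $x$ in a small neighbourhood, I would successively approximate $f(x)$ by elements $y_k\in\lambda_k C_{n_1,\dots,n_k}$ while controlling both the residuals $x-\sum_{j\le k}x_j$ in $E$ and the tails of $\sum_k y_k$ along the disks of the strand. Completeness of $E$ and of the Banach disks $C_{n_1,\dots,n_k}$ makes both series Mackey-convergent, and the closed graph identifies the limit of the $y_k$ with $f(x)$, placing $f(x)$ in a bounded disk $D$ independent of $x$. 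I expect this closure-removal to be the main obstacle: it is precisely the point at which the completeness encoded in the web and the closed graph hypothesis are both indispensable, and it plays, in bornological form, the role that completeness plays in the final step of the Open Mapping argument.

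Finally I would observe that the passage to a non-Archimedean base field is only cosmetic: \emph{absolutely convex} is read as absolutely $k$-convex (that is, as a $\cO_k$-submodule), Baire category still applies to $k$-Banach spaces, and the geometric estimates of the summability step are replaced by ultrametric ones, which are in fact simpler. The conclusion is stated for $F_\uborn$ rather than for $F$ exactly because the bounded disk $D$ produced above lives in the bornology generated by Banach disks, whose associated topology is by definition the ultrabornologification.
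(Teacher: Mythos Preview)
Your reduction to a Banach domain via the colimit description of an ultrabornological space is correct and matches the paper's strategy, and your outline of the Baire--approximation descent along a strand is the right skeleton for the bornological Closed Graph Theorem. The genuine gap is in your description of the web on $F$ and in the step that produces the bounded Banach disk $D$.

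A topological web $\cW$ on $F$ does \emph{not} consist of bounded Banach disks: the sets $\cW(n_0,\ldots,n_k)$ are only required to be absolutely convex, and the completing condition guarantees that $\sum_k y_k$ converges \emph{topologically} in $F$, not Mackey in any fixed Banach disk. (Think of a Fr\'echet space, where the web comes from a basis of neighbourhoods of $0$: those are unbounded.) So the sentence ``completeness of $E$ and of the Banach disks $C_{n_1,\dots,n_k}$ makes both series Mackey-convergent'' is not justified, and with it the closed-graph identification that is supposed to place $f(x)$ in a single bounded disk $D$ independent of $x$ collapses: from topological convergence alone you cannot conclude Mackey convergence, and the bornologically closed graph hypothesis only bites on Mackey-convergent sequences.

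This is exactly the step the paper isolates. One first rescales the topological web to $\cV(n_0,\ldots,n_k)=\la^{-k}\cW(n_0,\ldots,n_k)$ so that any choice $x_k\in\cV_{s,k}$ can be written $x_k=\la^{-(k-1)}y_k$ with $y_k\in\cW_{s,k}$; then one proves (Lemma~\ref{lemma:polar}) that whenever $\sum_k \mu_k y_k$ converges for every bounded scalar sequence $(\mu_k)$, the bipolar $(\{y_k\})^{\circ\circ}$ is a bounded Banach disk. The argument uses the linear map $S_K\to F$, $(\mu_k)\mapsto\sum_k\mu_k y_k$, weak compactness of the image of the unit ball (Bourbaki--Alaoglu), and the Bipolar Theorem. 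Only then does one obtain Mackey convergence of $\sum_k x_k$ in some $E_B$, hence a bornological web on $(F,\cB_\Ban)$ and continuity into $F_\uborn$. Your proposal skips precisely this manufacture of a Banach disk out of topological summability; it is not cosmetic, and it is also the place where, over a non-Archimedean field, one is forced to add the polarity and spherical-completeness hypotheses.
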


The terminology of the theorem will be explained in the course of this work. What is interesting to notice is that, although we would like to prove a theorem for locally convex spaces we are naturally led to talk about bornologies and bounded maps. Popa's Theorem, on the contrary, is an explicit bornological statement which is the Archimedean case of our Theorem \ref{thm:net}. 

The content of the paper is the following: in the first section we give an overview of the theory of bornological vector spaces. In particular, since we adopt the unusual attitude of discussing the Archimedean and non-Archimedean case of the theory at the same time, we spend some time in recalling basic definitions and discuss in details the notions from the theory bornological vector spaces that will be used. In the second section we will introduce the notion of bornological nets and then give the main examples of bornological vector spaces endowed with nets. We will then deduce our first Closed Graph Theorem, which is the unified version of Popa's Theorem (cf. \cite{POPA}), stated as follows: 

\begin{thm} 
	Let $E$ and $F$ be separated convex bornological vector spaces, where $E$ is complete and $F$ has a net compatible with its bornology. Then, every linear map $f: E \to F$ with bornologically closed graph is bounded.
\end{thm}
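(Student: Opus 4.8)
The plan is to follow the classical web/Baire-category strategy of de Wilde and Popa, transported to the bornological setting. The first move is a reduction on the source: since $E$ is a separated \emph{complete} convex bornological space, its bornology is generated by the completant bounded disks, so that $E = \bigcup_B E_B$ is the inductive limit of the Banach spaces $E_B$ attached to the completant bounded disks $B$. A linear map out of $E$ is bounded precisely when each restriction $f|_{E_B}\colon E_B \to F$ is bounded, and each such restriction inherits a closed graph from $f$: bornological convergence in $E_B$ is convergence in the Banach norm of $E_B$, which is in particular bornological convergence in $E$, so the graph hypothesis survives the restriction. Hence it suffices to treat the case where $E$ is a Banach space with closed unit ball $U$, and the goal becomes to show that $f(U)$ is bounded in $F$.

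Next I would exploit the net on $F$ to build a thread by a Baire descent on $E$. Writing the net as a family $(N_s)$ of disks indexed by finite sequences $s$, the first layer covers $F$ pointwise, so $E = \bigcup_n f^{-1}(N_{(n)})$; since the Banach space $E$ is a Baire space and each $f^{-1}(N_{(n)})$ is absolutely convex, some closure $\overline{f^{-1}(N_{(n_1)})}$ has nonempty interior and thus contains a ball centred at $0$. Intersecting with this ball and applying the covering property one layer down yields $n_2$, then $n_3$, and so on, producing a thread $(n_k)_{k\ge 1}$ together with a decreasing sequence of balls in $E$ whose radii can be forced to shrink geometrically, and such that $\overline{f^{-1}(N_{(n_1,\dots,n_k)})}$ absorbs the $k$-th ball.

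The compatibility of the net with the bornology is what turns this thread into a single bounded target: it furnishes positive scalars $(\lambda_k)$ so that the string assembled from the $N_{(n_1,\dots,n_k)}$ along the thread lies in a bounded disk $D$ of $F$, which one may take completant, so that $F_D$ is a Banach space in which the relevant series converge. I would then run the standard successive-approximation argument: starting from $x \in U$, use that $U$ sits (after scaling) inside $\overline{f^{-1}(N_{(n_1)})}$ to find $y_1 \in f^{-1}(\lambda_1 N_{(n_1)})$ with $x - y_1$ in the next ball, correct within that ball to obtain $y_2$, and iterate. The geometric control on the balls makes the telescoping series converge in $E$ to $x$, while the images form a series converging in the Banach space $F_D$ to an element of a fixed bounded disk.

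The main obstacle is exactly this last upgrade step, where one must pass from membership in the \emph{closures} $\overline{f^{-1}(N_s)}$ to an honest bound on $f(x)$: Baire category only delivers the closures, and these live in the Banach topology of $E$, whereas $F$ carries only a bornology. This is the point at which the \emph{bornologically closed graph} hypothesis becomes indispensable, for the successive approximation produces a sequence converging to $x$ in $E$ whose images converge bornologically in $F_D$, and the closed-graph condition forces the bornological limit to equal $f(x)$, giving $f(x)$ in a fixed bounded disk and hence $f(U)$ bounded. Keeping the bookkeeping of the scalars $(\lambda_k)$ and the nested balls consistent with the net's compatibility condition is the delicate part; everything else is a transcription of the Archimedean Baire argument, which survives over any complete, non-trivially valued field since the reduction to Banach disks and the geometric estimates use only completeness and the nontriviality of the valuation.
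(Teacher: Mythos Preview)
Your outline is the paper's proof: reduce to $E$ Banach via the inductive-limit description of a complete bornological space, Baire-descend along the net to obtain a thread $s$ with each $f^{-1}(\cN_{s,k})$ non-meagre, then run the successive-approximation argument and invoke the closed-graph hypothesis. The one point to tighten is your appeal to a single completant bounded disk $D\subset F$ ``in which the relevant series converge'': condition (1) of net-compatibility only says that each individual series $\sum a_k x_k$ converges bornologically (in a disk that may depend on the particular sequence $(x_k)$) with tails landing in $\cN_{s,n}$, so it does not hand you a uniform Banach target up front. The paper separates the two compatibility conditions cleanly: the approximation argument and closed graph are used to show that $f$ of the unit ball is absorbed by \emph{each} $\cN_{s,k_0}$ along the thread (this is isolated as a lemma), and only then does condition (2) deliver boundedness via $f(B)\subset\bigcap_k a_k\cN_{s,k}$.
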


In the subsequent section the notion of bornological net is generalized by the notion of bornological web and the analogous Closed Graph Theorem for webbed bornological vector spaces is proved quite easily as a consequence of the previous discussion. In this case our theorem is the direct generalization, for all base fields, of the Closed Graph Theorem proved by Gach in \cite{G}, Theorem 4.3. 
In the last section we discuss some applications of the theorems we proved. We will see how one can deduce Isomorphism Theorems from Closed Graph Theorems and following \cite{G} we will see how de Wilde's Closed Graph Theorem can be deduced. We would like to remark that for non-Archimedean base fields we need to add some restrictions, that do not affect the Archimedean side of the theory. Our generalization of de Wilde's Theorem is the following:

\begin{thm}
	If $E$ is an ultrabornological locally convex space and
	$F$ is a polar webbed locally convex space defined over a spherically complete field $K$, then every linear map $f: E \to F$ which has bornologically closed graph with respect to the convex bornologies on $E$ and $F$ that are generated by all bounded Banach disks in $E$ and in $F$, respectively, is continuous even if regarded as a mapping into the ultrabornologification $F_\uborn$ of $F$.
\end{thm}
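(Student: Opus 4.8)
The plan is to reduce this topological statement to the purely bornological Closed Graph Theorem for webbed bornological vector spaces established in the previous section, via the dictionary between ultrabornological topologies and complete convex bornologies. Write $E^b$ and $F^b$ for the convex bornological vector spaces obtained by equipping $E$ and $F$ with the bornologies generated by their bounded Banach disks. Since $K$ is complete and non-trivially valued and $E$ is ultrabornological, the bornology of $E^b$ is complete, and both $E^b$ and $F^b$ are separated because the underlying spaces are. The first thing I would record is the dictionary itself: because $E$ is ultrabornological its topology is exactly the bornological topology associated with $E^b$, and by definition the ultrabornologification $F_\uborn$ carries the bornological topology associated with $F^b$. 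Using the functorial correspondence between bornologies and topologies recalled in the first section (and the completeness of the Banach-disk bornology on $F$), a linear map $f \colon E \to F_\uborn$ is continuous if and only if $f \colon E^b \to F^b$ is bounded. Thus the whole assertion collapses to: if $f$ has bornologically closed graph, then $f \colon E^b \to F^b$ is bounded.

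The technical heart is to manufacture, from the polar web on $F$, a bornological web on $F^b$ that is compatible with the Banach-disk bornology. Given the topological web $(C_{n_1,\dots,n_k})$ on $F$, I would pass to the absolutely convex closed hulls of its strands and then to their completants, and show that along each thread these sets form a sequence of bounded Banach disks assembling into a bornological web in the sense of the previous section. Polarity of the web is what guarantees that the relevant absolutely convex sets are weakly closed and that their completants remain bounded, while spherical completeness of $K$ is what makes the non-Archimedean Hahn–Banach theorem available, so that polars separate points and the completant of a polar bounded disk is again a Banach disk lying in the bornology. This is precisely the step in which the two hypotheses absent from the Archimedean formulation are genuinely used, and I expect it to be the main obstacle.

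Once a compatible bornological web on $F^b$ is in hand, the conclusion is immediate. The map $f \colon E^b \to F^b$ has bornologically closed graph by hypothesis, $E^b$ is a complete separated convex bornological vector space, and $F^b$ is a separated convex bornological vector space carrying a compatible bornological web; these are exactly the hypotheses of our Closed Graph Theorem for webbed bornological vector spaces (the generalization for all base fields of \cite{G}, Theorem 4.3), which therefore yields that $f \colon E^b \to F^b$ is bounded. By the dictionary of the first paragraph, this boundedness is equivalent to the continuity of $f \colon E \to F_\uborn$, which is the statement to be proved.
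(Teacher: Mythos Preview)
Your strategy is the paper's: turn the topological web on $F$ into a bornological web on $(F,\cB_\Ban)$, apply Theorem~\ref{thm:web}, and translate boundedness back to continuity via the ultrabornological hypothesis on $E$. One point to sharpen: Theorem~\ref{thm:web} only gives boundedness into the auxiliary bornology $\cB_{(\cV,b)}$, so the paper separately verifies (Lemma~\ref{lemma:web_top_born}/\ref{lemma:compatible_neumann}) that $\cB_{(\cV,b)}$ is finer than the von Neumann bornology of $F_\uborn$, and its actual construction is the rescaling $\cV(n_0,\dots,n_k)=\lambda^{-k}\cW(n_0,\dots,n_k)$ together with the bipolar argument of Lemma~\ref{lemma:polar} (this is precisely where polarity and spherical completeness enter), rather than passing to completants of closed hulls.
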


Therefore, in order to deduce de Wilde's Theorem for non-Archimedean base fields we needed to suppose that the base field $K$ is spherically complete and that $F$ is a polar locally convex space (cf. Definition \ref{defn:polar}), conditions which are always satisfied when $K$ is Archimedean. We remark that these hypothesis on $K$ and $F$ are only used in Lemma \ref{lemma:polar} and in Lemma \ref{lemma:web_ultraborn}; one might ask if it is possible to prove that lemmata without these restrictions. We do not address this problem in this work.
Finally, in the last part of the paper we show how to use the Closed Graph Theorems for bornological spaces to deduce that all algebra morphisms between dagger affinoid algebras, as defined in \cite{Bam}, are bounded. This application, and others coming in \cite{BaBeKr} and planned in future works, are our main motivations for this study. 

\section{Bornological spaces and Closed Graph Theorems}

\subsection*{Closed graph}
Let $u: E \to F$ be a map of sets, then the set 
\[ \Gamma(u) =  \{ (x, y) \in E \times F \ | y = u(x) \ \}  \]
is called the graph of $u$. If $E$ and $F$ are Hausdorff topological spaces and $u$ a continuous map, then $\Gamma(u)$ is a closed subspace of $E \times F$ endowed with the product topology. This is a basic property of Hausdorff topological spaces. If $E$ and $F$ are vector spaces over a field $K$ and $u$ is a linear map, then $\Gamma(u)$ is a vector subspace of $E \times F$. If $E$ and $F$ are separated bornological vector spaces over a complete, non-trivially valued field $K$ and $u$ is linear and bounded, then $\Gamma(u)$ is bornologically closed in $E \times F$ endowed with the product bornology (see below for what it means for a subset of a bornological vector space to be bornologically closed). This  assertion is pretty easy to check. Let $(x_n, u(x_n))$ be a sequence of elements of $\Gamma(u)$ which converges bornologically to $( x, y)$ in $E \times F$. Then, by definition of product bornology, $x_n \to x$ in $E$ and $u(x_n) \to y$ in $Y$. Since $u$ is bounded, the sequence $(u(x_n))$ converges bornologically to $u(x)$ and since $Y$ is separated, we must have $y = u(x)$. Therefore $(x, y) \in \Gamma(u)$ and $\Gamma(u)$ is bornologically closed in $E \times F$.

The Closed Graph Theorems are converses of the above statements for some special class of bornological or topological $K$-vector spaces. Here we pursue the main ideas of \cite{G} for which the bornological Closed Graph Theorems are of more fundamental nature and extend it for non-Archimedean base fields.
The statements of our Closed Graph Theorems assert that if $u: E \to F$ is a linear map which has bornologically closed graph then it is bounded when $E$ and $F$ belong to some particular classes of bornological vector spaces: we will prove it when $E$ is a complete bornological vector space and $F$ a separated bornological vector space endowed with a net, in Section \ref{sec:closed_nets}, and when $F$ is endowed with a web in Section \ref{sec:closed_webs}. Both the proofs of these sections are adaptations of results from \cite{H} and \cite{G} on bornological vector spaces over Archimedean base fields to any non-trivially valued, complete base field $K$ Archimedean or not.
 
\subsection*{Bornologies}

Bornological vector spaces are well studied objects in functional analysis over $\R$ and $\C$. They are not mainstream, as the theory of topological or locally convex vector spaces, but they are often useful in addressing problems for locally convex spaces and they found a good amount of applications. Thus, during the years, a good amount of work has been done to study the properties of bornological vector spaces and algebras over $\R$ and $\C$: for examples \cite{H}, \cite{Hog2}, \cite{Hog3}, \cite{WAEL}, \cite{M} discuss various aspects and applications of the theory. On the other hand, for non-Archimedean base fields the theory has never got much attention. The only works known to the author which deal with bornological vector spaces over non-Archimedean base fields date back to many years ago and they often discuss the non-Archimedean base field case as a mere example, interesting for working out general theories and general theorems, pursuing a ``Bourbaki" study of the subject; however, this side of the theory were seldom thought to have applications to ``real" mathematics. An example of this kind of work is \cite{H2}. In recent years a renewed interest in the theory of bornological vector spaces may challenge this attitude: for example, on the Archimedean side of the theory, one has the work \cite{M}, \cite{M2}, \cite{M3} showing the usefulness of bornological vector spaces in non-commutative geometry and representation theory and \cite{Bam}, \cite{CAM}, \cite{BaBe} where also the theory of non-Archimedean bornological vector spaces and algebras is used in analytic geometry and higher order local field theory. We will recall the basis of the theory of bornoloogical vector spaces as far as needed for our scopes.

\begin{defn}
Let $X$ be a set. A \emph{bornology} on $X$ is a collection $\cB$ of subsets of $X$ such that
\begin{enumerate}
\item $\cB$ is a covering of $X$, \ie $\forall x \in X, \exists B \in \cB$ such that $x \in \cB$;
\item $\cB$ is stable under inclusions, \ie $A \subset B \in \cB \then A \in \cB$;
\item $\cB$ is stable under finite unions, \ie for each $n \in \N$ and $B_1, ..., B_n \in \cB$, $\underset{i = 1}{\overset{n}\bigcup} B_i \in \cB$.
\end{enumerate}

A pair $(X, \cB)$ is called a \emph{bornological set}, and the elements of $\cB$ are called \emph{bounded subsets} of $X$ 
(with respect to $\cB$, if it is needed to specify).  
A family of subsets $\cA \subset \cB$ is called a \emph{basis} for $\cB$ if for any $B \in \cB$ there exist $A_1, \dots, A_n \in \cA$ such that $B \subset A_1 \cup \dots \cup A_n$.
A \emph{morphism} of bornological sets $\varphi: (X, \cB_X) \to (Y, \cB_Y)$ is defined to be a bounded map $\varphi: X \to Y$, \ie a map of sets such that $\varphi(B) \in \cB_Y$ for all $B \in \cB_X$. 
\end{defn}

From now on let's fix a complete, non-trivially valued field $K$. We will use the notation $K^\circ = \{ x \in K | |x| \le 1 \}$. Clearly $K$ has a natural structure of bornological field.

\begin{defn}
A \emph{bornological vector space} over $K$ is a $K$-vector space $E$ along with a bornology on the underlying set of $E$ for which the maps $(\la, x) \mapsto \la x$ and $(x, y) \mapsto x + y$
are bounded.
\end{defn}

We will only work with bornological vector spaces whose bounded subsets can be described using convex subsets, in the following way.

\begin{defn}
Let $E$ be a $K$-vector space. A subset $B \subset E$ is called \emph{balanced} if for every $\la \in K^\circ$ one has that $\la B \subset B$. A subset $B \subset E$ is called \emph{absolutely convex} (or \emph{disk}) if
\begin{enumerate}
\item for $K$ Archimedean, it is convex and balanced, where convex means that for every $x, y \in B$ and $t \in [0, 1]$ then $(1 -t) x + t y \in B$;
\item for $K$ non-Archimedean, it is a $K^\circ$-submodule of $E$.
\end{enumerate}
\end{defn}

The definition of absolutely convex subset of $E$ is posed in two different ways, depending on $K$ being Archimedean of non-Archimedean, although the formal properties are essentially the same in both cases. Moreover, using the theory of generalized rings (in the sense of Durov \cite{DUR}) one can put the two situations on equal footing, but we are not interested to this issue in this work. 
 
\begin{defn}
A bornological $K$-vector space is said to be \emph{of convex type} if its bornology has a basis made of absolutely convex subsets. 
\end{defn}

We denote by $\bBorn_K$ the category of bornological vector spaces of convex type over $K$. Then, every semi-normed space over $K$ can be endowed with the bornology induced by the bounded subsets for the semi-norm and is manifestly of convex type. This association permits to see the category of semi-normed spaces over $K$ as a full sub-category of $\bBorn_K$.

\begin{rmk}
For every bornological vector space of convex type $E$ there is an isomorphism
\[ E \cong \limind_{B \in \cB_E} E_B  \]
where $\cB_E$ denotes the family of bounded absolutely convex subsets of $E$ and $E_B$ is the vector subspace of $E$ spanned by the elements of $B$  equipped with the \emph{gauge semi-norm} (also called \emph{Minkowski functional}) defined by $B$. Notice that all morphism of this inductive system are monomorphisms.
\end{rmk}

\begin{defn} \label{defn:separated_born}
        A bornological vector space over $K$ is said to be \emph{separated} if its only bounded vector subspace is the trivial subspace $\{0\}$.
\end{defn}

\begin{rmk}
A bornological vector space of convex type over $K$ is separated if and only if for each $B\in \cB_E$, the gauge semi-norm on $E_{B}$ is actually a norm.
\end{rmk}

\begin{defn} \label{defn:banach_disk}
	A disk $B \subset E$ is said to be a \emph{Banach disk} if $E_B$ is a Banach space.
\end{defn}

\begin{defn} \label{defn:complete_born}
        A bornological space $E$ over $k$ is said to be \emph{complete} if
        \[ E \cong \limind_{i \in I} E_i \]
        for a filtered colimit of Banach spaces over $K$ for which the system morphisms are all injective.
\end{defn}

It can be shown that the definition of complete bornological vector space just given is equivalent to the request that the family of bounded disks $\cB_E$ of $E$ admits a final subfamily made of Banach disks, cf. \cite{H2} Proposition 7 page 96.

\subsection*{Bornological convergence and bornologically closed subsets} The data of a bornology is enough for endowing a vector space of a notion of convergence.

\begin{defn} \label{defn:born_conv}
	Let $E$ be a bornological $k$-vector space and $\{x_n\}_{n \in \N}$ a sequence of elements of $E$. We say that $\{x_n\}_{n \in \N}$ \emph{converges (bornologically) to $0$ in the sense of Mackey} if there exists a bounded subset $B \subset E$ such that for every $\la \in K^\times$ there exists an $n = n(\la)$ for which
	\[ x_m \in \la B, \forall m > n. \] 
	We say that $\{ x_n \}_{n \in \N}$ converges  (bornologically) to $a \in E$ if the sequence $\{x_n - a\}_{n \in \N}$ converges (bornologically) to zero.
\end{defn}

An analogous definition can be given for the convergence of filters of subsets of $E$. We omit the details of this definition since is not important for our scope.

\begin{rmk}
	The notion of bornological convergence on a bornological vector space of convex type $E = \underset{B \in \cB_E}\limind E_B$ can be restated in the following way: $\{ x_n \}_{n \in \N}$ is convergent to zero in the sense of Mackey if and only if there exists a $B \in \cB_E$ and $N \in \N$ such that for all $n > N$, $x_n \in E_B$ and $x_n \to 0$ in $E_B$ for the semi-norm of $E_B$.
\end{rmk}

\begin{defn}
Let $E$ be a bornological vector space over $K$.
\begin{itemize}
\item a sequence $\{x_n\}_{n \in \N} \subset E$ is called \emph{Cauchy-Mackey} if the double sequence $\{x_n - x_m\}_{n,m \in \N}$ converges to zero;
\item a subset $U \subset E$ is called \emph{(bornologically) closed} if every Mackey convergent sequence of elements of $U$ converges (bornologically) to an element of $U$. 
\end{itemize}
\end{defn}

\begin{defn}
A bornological vector space is called \emph{semi-complete} if every Cauchy-Mackey sequence is convergent.
\end{defn}

The notion of semi-completeness is not as useful as the notion of completeness in the theory of topological vector spaces. We remark that any complete bornological vector space is semi-complete, but the converse is false.

It can be shown that the notion of bornologically closed subset induces a topology on $E$, but this topology is neither a vector space topology, nor a group topology in general. Therefore, an arbitrary intersection of bornological closed subsets of a bornological vector space is bornologically closed. So, the following definition is well posed.

\begin{defn}
	Let $U \subset E$ be a subset of a bornolgical vector space. The closure of $U$ is the smallest bornologically closed subset of $E$ in which $U$ is contained. We denote the closure of $U$ by $\ol{U}$.
\end{defn}

The concept of bornologically closed subspace fits nicely in the theory. For example a bornological vector space is separated, in the sense of Definition \ref{defn:separated_born}, if and only if $\{0 \}$ is a bornologically closed subset. We have to warn the reader that the closure of a subset $X \subset E$ of a bornological subset is not always equal to the limit points of convergent sequences of elements of $X$ but strictly contains it.

\subsection*{Duality between bornologies and topologies}

Let $\bLoc_K$ denote the category of locally convex vector spaces over $K$.
We recall the definitions of two functors from \cite{H2}, ${}^t: \bBorn_K \to \bLoc_K$ and ${}^b: \bLoc_K \to \bBorn_K$. To a bornological vector space of convex type $E$ we associate the topological vector space $E^t$ in the following way: we equip the underlying vector space of $E$ with a topology for which a basis of neighborhoods of $0$ is given by \emph{bornivorous subsets}, \ie subsets that absorb all bounded subsets of $E$. The association $E \mapsto E^t$ is well defined and functorial. Then, if $E$ is a locally convex space, $E^b$ is defined to be the bornological vector space obtained by equipping the underlying vector space of $E$ with the \emph{von Neumann} (also called canonical) bornology, whose bounded subsets are the subsets of $E$ absorbed by all neighborhood of $0 \in E$. Also, this association is well defined and functorial.
In chapter 1 of \cite{H2} one can find many properties of these constructions of which the main one is that they form an adjoint pair of functors. 

We conclude this review of the theory by recalling some relations between bornological and topological vector spaces that we will use later in our proofs.

\begin{prop} \label{prop:born_convergence}
	Let $E$ be a locally convex space and consider the bornological vector space $E^b$. Then, if a sequence $\{x_n\}_{n \in \N}$ converges to $0$ in the sense of Mackey in $E^b$ it converges topologically in $E$.
\end{prop}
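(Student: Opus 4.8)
The plan is to unwind the definition of the von Neumann bornology on $E^b$ and match it against the quantifier structure in the definition of Mackey convergence. Recall that a subset $B$ is bounded in $E^b$ precisely when it is absorbed by every neighbourhood of $0$ in $E$, and that the topology of $E$ admits a basis of neighbourhoods of $0$ consisting of absolutely convex (hence balanced) sets. The whole argument consists of producing, for each such neighbourhood, a single scalar that simultaneously witnesses boundedness and can be reconciled with the tail condition defining Mackey convergence.

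Concretely, suppose $\{x_n\}_{n\in\N}$ converges to $0$ in the sense of Mackey in $E^b$, and fix the bounded set $B\subset E^b$ provided by Definition \ref{defn:born_conv}. Let $U$ be a neighbourhood of $0$ in $E$; since balanced neighbourhoods form a basis, I may assume $U$ is balanced. Because $B$ is bounded in $E^b$, the neighbourhood $U$ absorbs $B$, so there is $\mu\in K^\times$ with $B\subset \mu U$.

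The key step is the choice of scalar. Since $K$ is non-trivially valued, I can pick $\lambda\in K^\times$ with $|\lambda|\,|\mu|\le 1$. Then $\lambda B\subset \lambda\mu U\subset U$, the last inclusion holding because $U$ is balanced and $|\lambda\mu|\le 1$ (this is exactly where the uniform treatment of the Archimedean and non-Archimedean definitions of balancedness is used: in both cases $\nu U\subset U$ whenever $\nu\in K^\circ$). Feeding this particular $\lambda$ into the definition of Mackey convergence, there is $N=n(\lambda)$ such that $x_m\in\lambda B$ for all $m>N$, whence $x_m\in U$ for all $m>N$. As $U$ ranged over a basis of neighbourhoods of $0$, this shows $x_n\to 0$ in the topology of $E$.

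I do not expect a serious obstacle: the statement is essentially a bookkeeping exercise converting the ``one bounded set, all scalars'' formulation of Mackey convergence into the ``all neighbourhoods, one scalar each'' formulation of topological convergence. The only point demanding care is that the scalar $\lambda$ can be taken with arbitrarily small absolute value, which is guaranteed precisely by the non-triviality of the valuation on $K$; were the valuation trivial, no such $\lambda$ need exist and the argument would break down.
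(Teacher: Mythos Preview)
Your proof is correct and follows essentially the same approach as the paper: both fix the bounded set $B$ witnessing Mackey convergence, use that any neighbourhood $U$ of $0$ absorbs $B$ to produce a scalar $\lambda$ with $\lambda B\subset U$, and then invoke the Mackey condition for that $\lambda$. Your version is simply more explicit about why such a $\lambda$ exists (via balancedness of $U$ and the non-triviality of the valuation), whereas the paper states the existence of $\lambda$ directly.
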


\begin{proof}
%By \ref{rem:circled} we can always suppose that $\{x_n\}$ satisfy conditions of Definition \ref{defn:born_conv} for a circled bounded subset $B$. 
Let $B \subset E$ be a von Neumann bounded subset such that the condition of Mackey convergence to $0$ for $\{x_n\}_{n \in \N}$ is satisfied.
Given any neighborhood of zero $U \subset E$, then there must exist a $\la \in K^\times$ such that
\[ \la B \subset U \] 
and by Definition \ref{defn:born_conv} there exists an $n = n(\la)$ such that 
\[ x_i \in \la B \subset U, \ \ \forall i > n, \]
hence the sequence converge topologically.
\end{proof}

\begin{defn} \label{def:born_metrizable}
Let $E$ be a bornological vector space of convex type. We say that $E$ is metrizable if $E \cong F^b$ for a metrizable locally convex space. 
\end{defn}

The following result is a well-known statement of the theory of bornological vector spaces over $\R$ or $\C$, see for example proposition (3) of section 1.4.3 of \cite{H}. But as far as we know the non-Archimedean version of the result is hard to find in literature. The last lines of page 108 of \cite{H2}, essentially affirm, without proof, the statement of next proposition. Here we offer a detailed proof.

\begin{prop} \label{prop:metric}
	Let $E$ be metrizable bornological vector space of convex type. Then, a sequence $\{ x_n \}_{n \in \N}$ converges bornologically in $E$ if and only if it converges topologically in $E^t$.
\end{prop}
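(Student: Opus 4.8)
The plan is to prove the two implications separately, and to note that only the converse uses metrizability. Throughout, after translating we may assume the candidate limit is $0$, since both convergences are invariant under the vector space operations.

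For the \emph{only if} direction (bornological convergence in $E$ implies topological convergence in $E^t$) I would argue exactly as in Proposition \ref{prop:born_convergence}. Suppose $\{x_n\}$ converges to $0$ in the sense of Mackey in $E$, with witnessing bounded disk $B$ as in Definition \ref{defn:born_conv}. A basic neighborhood $U$ of $0$ in $E^t$ is by construction bornivorous, hence absorbs $B$, so there is $\la \in K^\times$ with $\la B \subset U$; Mackey convergence then gives $x_m \in \la B \subset U$ for all $m$ large, so $x_n \to 0$ in $E^t$. This half needs nothing about metrizability.

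For the \emph{if} direction I would first set up the reduction. Write $E \cong F^b$ for a metrizable locally convex space $F$ (Definition \ref{def:born_metrizable}). Every absolutely convex neighborhood of $0$ in $F$ absorbs each von Neumann bounded subset, hence is bornivorous, hence is a neighborhood of $0$ in $E^t$; thus the topology of $E^t$ is finer than that of $F$, and topological convergence in $E^t$ implies topological convergence in $F$. It therefore suffices to prove that $x_n \to 0$ in $F$ forces $x_n \to 0$ in the sense of Mackey in $F^b = E$. Here I would fix a decreasing countable basis $\{U_k\}$ of absolutely convex neighborhoods of $0$ in $F$ (available since $F$ is metrizable) and a scalar $\rho \in K^\times$ with $0 < |\rho| < 1$ (available since $K$ is non-trivially valued). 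Using $x_n \to 0$, choose indices $n_1 < n_2 < \cdots$ with $x_n \in \rho^k U_k$ for every $n \ge n_k$. For $n \ge n_1$ let $k(n)$ be the unique index with $n_{k(n)} \le n < n_{k(n)+1}$ and put $y_n = \rho^{-k(n)} x_n$. For a fixed $j$ and $n \ge n_j$ one has $k(n) \ge j$ and $y_n \in U_{k(n)} \subset U_j$, while the finitely many remaining $y_n$ are absorbed by $U_j$; hence $\{y_n\}$ is von Neumann bounded, and its disked hull $B$ is bounded in $F^b$ because the bornology is of convex type. Since $x_n = \rho^{k(n)} y_n \in \rho^{k(n)} B$ with $k(n) \to \infty$, for any $\la \in K^\times$ one picks $k_0$ with $|\rho|^{k_0} \le |\la|$ and concludes $x_n \in \rho^{k(n)} B \subset \rho^{k_0} B \subset \la B$ for all $n \ge n_{k_0}$, using that $B$ is balanced; this is precisely Mackey convergence to $0$ with witness $B$.

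The main obstacle is this diagonal construction in the converse: the classical Archimedean proof rescales the $k$-th block by the integer $k$, and the delicate point is to replace integer rescalings by the powers $\rho^{\pm k}$ of a single scalar $\rho$ with $0 < |\rho| < 1$ so that the estimates $y_n \in U_{k(n)} \subset U_j$ and $\rho^{k_0} B \subset \la B$ hold verbatim over an arbitrary non-trivially valued field $K$, Archimedean or not. The reduction showing that $E^t$ refines the topology of $F$ is the other point to state carefully, but it is routine.
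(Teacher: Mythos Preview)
Your proof is correct. Both implications are handled soundly: the forward direction is identical to the paper's, and for the converse your reduction $E^t \supset F$ and the diagonal rescaling by powers of a fixed $\rho \in K^\times$ with $|\rho|<1$ are clean and work uniformly over any non-trivially valued $K$.

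The route, however, differs from the paper's. The paper does \emph{not} rescale the sequence. Instead it observes that the convergent sequence $A=\{x_n\}$ is bounded, so $A \subset \la_n V_n$ for suitable $\la_n$, and then builds the witnessing bounded set directly as an intersection $B = \bigcap_n \mu_n V_n$ where $|\mu_n| = |\la_n|/\be_n$ for an auxiliary null sequence $\be_n \in |K^\times|$; the key step is showing that for each $\epsilon \in |K^\times|$ there is $m$ with $A \cap V_m \subset \la B$ for $|\la|=\epsilon$. Your argument instead produces $B$ as the disked hull of the rescaled sequence $y_n = \rho^{-k(n)} x_n$. Your approach is the more familiar ``diagonal'' construction and is arguably more transparent; the paper's approach has the minor virtue that the set $B$ depends only on the absorption constants $\la_n$ and not on the particular sequence, but this plays no role here. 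Either way the essential adaptation to non-Archimedean $K$ is the same: replace integer scalings by powers of a chosen $\rho$ (you) or by elements of prescribed absolute value in $|K^\times|$ (the paper).
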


\begin{proof}
	Thanks to Proposition \ref{prop:born_convergence} we only need to check that topological convergence implies bornological convergence.
	So, let $\{V_n \}_{n \in \N}$ be a countable base of absolutely convex neighbourhoods of $0$ in $E$ such that $V_{n + 1} \subset V_n$ for every $n \in \N$. Let $A = \{ x_n \}_{n \in \N}$ be a sequence in $E$ which converges to $0$ topologically. 

Since the sequence $A$ converges topologically to zero, then it is
absorbed by every neighbourhood of zero. Therefore, for every $n \in \N$,
there exists a $\a_n \in |K^\times|$ such that 
\[ A \subset \la_n V_n \]
for $\la_n \in K^\times$ with $|\la_n| = \a_n$.
It follows that 
\[ A \subset \bigcap_{n = 1}^\infty \la_n V_n. \]
Let $\{\be_n \}_{n \in \N}$ be a sequence of strictly positive real numbers such that $\be_n \in |K^\times|$ and $\be_n \to 0$ as $n \to \infty$. Let $\gamma_n = \frac{\a_n}{\be_n}$ and  $\epsilon \in |K^\times|$ be given. Define
\[ B = \bigcap_{n = 1}^\infty \mu_n V_n \]
with $\mu_n \in K^\times$ and $|\mu_n| = \gamma_n$. $B$ is clearly a bounded subset of $E$, because it is absorbed by all $V_n$. We are going to prove the following assertion: for every $\epsilon \in |K^\times|$ there is an integer $m \in \N$ for which $A \cap V_m \subset \la B$, for $\la \in K^\times$ with $|\la| = \epsilon$, from which the proposition will be then proved.

Since the sequence $\frac{\gamma_n}{\a_n} = \frac{1}{\be_n}$ tends to $\infty$ for $n \to \infty$, there is an integer $p \in \N$ such
that 
\[ \forall n > p, \ \frac{\gamma_n}{\a_n} > \frac{1}{\epsilon}. \]
Therefore 
\[ A \subset \bigcap_{n = 1}^\infty \la_n V_n \then A \subset \rho_n V_n,  \]
for $\rho_n = \la \mu_n \in K^\times$ with $|\la| = \epsilon$ and $n > p$.
But the set
\[ \bigcap_{n = 1}^p \rho_n V_n, \]
with again $\rho_n = \la \mu_n \in K^\times$ and $|\la| = \epsilon$, is a neighbourhood of $0$. Hence there exists an integer $m \in \N$ such that
\[ V_m \subset \bigcap_{n = 1}^p \rho_n V_n \]
therefore
\[ A \cap V_m \subset \bigcap_{n = 1}^\infty \rho_n V_n = \bigcap_{n = 1}^\infty \la \mu_n V_n = \la B,  \]
proving the claim.
\end{proof}

The last proposition has the following interesting consequence.

\begin{cor} \label{cor:metric}
Let $E$ be a metrizable locally convex space. Then $E$ is complete topologically, if and only if $E^b$ is complete bornologically if and only if $E^b$ is semi-complete.
\end{cor}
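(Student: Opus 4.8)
The plan is to close the cycle of implications
\[ (E^b \text{ complete}) \Rightarrow (E^b \text{ semi-complete}) \Rightarrow (E \text{ topologically complete}) \Rightarrow (E^b \text{ complete}), \]
of which the first arrow is free, since it is already remarked in the text that every complete bornological vector space is semi-complete. Throughout I would exploit that, because $E$ is metrizable, the bornological space $E^b$ is metrizable in the sense of Definition \ref{def:born_metrizable}, so that both Proposition \ref{prop:born_convergence} and the argument of Proposition \ref{prop:metric} apply to it: a sequence converges bornologically in $E^b$ if and only if it converges topologically in $E$.

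For the implication $(E^b \text{ semi-complete}) \Rightarrow (E \text{ complete})$ the core step is the lemma that a topologically Cauchy sequence $\{x_n\}_{n \in \N}$ in the metrizable space $E$ is Cauchy--Mackey in $E^b$, i.e. that the set of differences $D = \{x_n - x_m : n,m \in \N\}$ is Mackey null. I would obtain this by rerunning the construction in the proof of Proposition \ref{prop:metric} with the null sequence $A$ replaced by $D$: since $\{x_n\}_{n \in \N}$ is topologically Cauchy it is topologically bounded, so $D$ is absorbed by each member $V_k$ of a countable decreasing base $\{V_n\}_{n \in \N}$ of absolutely convex neighbourhoods of $0$, which is exactly the input needed to manufacture the bounded disk $B = \bigcap_n \mu_n V_n$. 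The Cauchy condition then plays the role that topological convergence to $0$ played before: given $\epsilon \in |K^\times|$ one produces $m$ with $D \cap V_m \subset \la B$ for $|\la| = \epsilon$, and then an $N$ with $x_n - x_m \in V_m$ for all $n,m > N$, whence $x_n - x_m \in \la B$. Granting this lemma, semi-completeness of $E^b$ makes $\{x_n\}_{n \in \N}$ bornologically convergent to some $a$, and Proposition \ref{prop:born_convergence} upgrades this to topological convergence $x_n \to a$ in $E$; as $E$ is metrizable, sequential completeness is completeness, so $E$ is topologically complete.

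For the implication $(E \text{ complete}) \Rightarrow (E^b \text{ complete})$ I would use the characterization recalled after Definition \ref{defn:complete_born} (cf. \cite{H2}), namely that $E^b$ is bornologically complete precisely when $\cB_{E^b}$ has a final subfamily of Banach disks, and I claim the topologically closed bounded disks form such a family. Indeed every bounded disk is contained in its closure $\ol{B}$, which is again a bounded disk, so closed bounded disks are cofinal; and for a closed bounded disk $B$ the normed space $E_B$ is in fact a Banach space, which is the classical argument: a Cauchy sequence for the gauge of $B$ is, because $B$ is bounded, topologically Cauchy in $E$, hence convergent in $E$ to some $x$ by topological completeness, and closedness of $B$ together with the estimate furnished by the gauge forces $x \in E_B$ with convergence in $E_B$. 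I would carry this out uniformly over Archimedean and non-Archimedean $K$, the only change being the meaning of \emph{disk} and of the gauge semi-norm, and using separatedness of $E$ so that the gauge is a genuine norm.

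The delicate points are these two adaptations of classical facts rather than the formal bookkeeping of the cycle. The Cauchy--Mackey lemma must be \emph{extracted} by modifying, not merely quoting, the proof of Proposition \ref{prop:metric}, and the statement that closed bounded disks are Banach disks is where topological completeness is genuinely converted into the bornological notion of a final family of Banach disks. I expect the latter to be the main obstacle, since one must check that the completeness argument for $E_B$ goes through verbatim for non-Archimedean $K$, using that $B$ is then a $K^\circ$-submodule and that the gauge is a norm once $E$ is separated.
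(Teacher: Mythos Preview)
Your proposal is correct and follows the same cycle of implications as the paper. The paper's proof is extremely terse: for $E$ topologically complete $\Rightarrow E^b$ bornologically complete it simply cites \cite{H2}, Proposition~15 p.~101, and for $E^b$ semi-complete $\Rightarrow E$ complete it just says this is ``directly'' implied by Proposition~\ref{prop:metric}. You unpack both steps: for the first you supply the standard argument that closed bounded disks are cofinal Banach disks, and for the second you correctly observe that it is the \emph{proof} of Proposition~\ref{prop:metric}, not merely its statement, that must be invoked, since one needs the Cauchy analogue (topologically Cauchy $\Rightarrow$ Cauchy--Mackey) rather than the convergence statement. Your adaptation---replacing the null sequence $A$ by the set of differences $D$, using that a Cauchy sequence is bounded so $D$ is absorbed by every $V_n$, and then using the Cauchy condition in place of convergence to land eventually in $V_m$---is exactly how that gap is closed. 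One small terminological slip: saying ``$D$ is Mackey null'' is not quite the definition of Cauchy--Mackey, but the argument you actually sketch produces a single bounded $B$ with $x_n - x_m \in \lambda B$ for $n,m$ large, which is the correct conclusion.
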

\begin{proof}
That the completeness of $E$ implies the completeness of $E^b$ is proved in Proposition 15 of page 101 of \cite{H2}, and as we have already remarked the completeness of $E^b$ implies its semi-completeness. Then, Proposition \ref{prop:metric} directly implies that the semi-completeness of $E^b$ implies the completeness of $E$ proving the corollary.
\end{proof}

\section{The closed graph theorem for bornological spaces with nets} \label{sec:closed_nets}

In this section we will prove the Closed Graph Theorem for bornological vector spaces endowed with a net. Essentially we will adapt the proof of Popa's Theorem that one can find in \cite{H} and we will make it work over any complete, non-trivially valued field $K$.

\begin{defn} \label{defn:net}
Let $F$ be a $K$-vector space. A \emph{net} on $F$ is a map $\cN: \underset{k \in \N}\bigcup \N^k \to \sP(F)$ such that
\begin{enumerate}
	\item each $\cN(n_1, \ldots, n_k)$ is a disk;
	\item $\cN(\void) = F$;
	\item for every finite sequence $(n_0, \ldots, n_k)$ one has
	\[  \cN(n_0, \ldots, n_k) = \bigcup_{n \in \N} \cN(n_0, \ldots, n_k, n). \]
\end{enumerate}
\end{defn}

Notice that condition $(2)$ of the previous definition is used to give meaning to the formula
\[ F = \cN(\void) = \bigcup_{n \in \N} \cN(n). \]
If $s: \N \to \N$ is a sequence we will use the notation
\[ \cN_{s, k} = \cN(s(0), \ldots, s(k)). \]
 
\begin{defn} \label{defn:net_compatible}
Let $F$ be a separated bornological $K$-vector space of convex type. Then we can say that a net $\cN$ on $F$ is compatible with its bornology if
\begin{enumerate}
	\item for every sequence $s: \N \to \N$ there is a sequence of positive real numbers $b(s): \N \to \R_{> 0}$ such that for all $x_k \in \cN_{s,k}$ and $a_k \in K$ with $|a_k| \le b(s)_k$ the series
	\[ \sum_{k \in \N} a_k x_k \]
	converges bornologically in $F$ and $\underset{k \ge n}\sum a_k x_k \in \cN_{s,n}$ for every $n \in \N$.
	\item For every sequences $\{ \la_k\}_{k \in \N}$ of elements of $K$ and $s: \N \to \N$ the subsets
	\[ \bigcap_{k \in \N} \la_k \cN_{s, k} \]
	are bounded in $F$.
\end{enumerate}

We say that a separated bornological vector space \emph{has a net} if there exists a net which is compatible with its bornology. 
\end{defn}

The most common bornological vector spaces used in functional analysis have nets.

\begin{exa} \label{exa:nets}
\begin{enumerate}
\item Let $F$ be a bornological Fr\'echet space. By this we mean that $F \cong E^b$ for a Fr\'echet space $E$. Then $F$ has a net compatible with its bornology. To show this, consider a base of neighborhoods $\{ V_n \}_{n \in \N}$ of $0 \in F$. If the base field $K$ is Archimedean it is well known that one can define a net in the following way. For any $k$-tuple of integers $(n_1, \ldots, n_k)$ define
\[ \cN(n_1, \ldots, n_k) = n_1 V_{n_1} \cap \ldots \cap n_k V_{n_k}. \]
See section 4.4.4 of \cite{H} for a detailed proof of this fact.
This definition does not work for non-Archimedean base fields since $|n_k| \le 1$ and so it does not satisfy condition (3) of Definition \ref{defn:net}. So, let's pick an element $\a \in K$ such that $|\a| > 1$. This is always possible since $K$ is non-trivially valued. Then, let's define
\[ \cN(n_1, \ldots, n_k) = \a^{n_1} V_{n_1} \cap \ldots \cap \a^{n_k} V_{n_k}  \]
and check that this is a net compatible with the bornology of $F$. Since every neighborhood of $0$ is absorbent then the condition of $\cN$ to be a net is clearly satisfied. Let $\{ n_k \}_{k \in \N}$  be a sequence of integers, define $v_k = |\a^{ -2^k n_k}|$ and choose $\la_k \in K$ with $|\la_k| \le v_k$. Then, the series
\[ \sum_{k = 0}^\infty \la_k x_k \]
converges bornologically in $F$ for every $x_k \in \cN(n_1, \ldots, n_k)$ because it converges for the metric of $F$, and by Proposition \ref{prop:metric} the convergence for the metric of $F$ is equivalent to the bornological convergence in $F$. Moreover, for every $k_0$
\[  \sum_{k = k_0}^\infty \la_k x_k \in \la_{k_0} \cN(n_1, \ldots, n_k) \subset \cN(n_1, \ldots, n_k),  \]
because $|\la_k| < 1$ and for non-Archimedean base fields disks are additive sub-groups of $F$. Then, we need to check the second condition on compatibility of the net with the bornology, which is pretty easy to check since given any sequence $\{\la_k\}_{k \in \N}$ of elements of $K$, we can consider the set 
\[ A = \bigcap_{k = 0}^\infty \la_k \cN(n_1, \ldots, n_k) =  \bigcap_{k = 0}^\infty \la_k \a^{n_1} V_{n_1} \cap \ldots \cap \a^{n_k} V_{n_k}. \]
Thus, given any $V_{n_r}$, we have that
\[  A \subset \la_r \a^{n_r}  V_{n_r}  \]
hence $A$ is absorbed by any neighborhood of $0 \in F$, hence it is bounded for the von Neumann bornology of the Fr\'echet space.

\item Let $F = \underset{n \in \N}\limind F_n$ be a monomorphic inductive limit such that $F_n$ are separated bornological vector spaces which have nets compatible with the bornology, then we claim that $F$ is a separated bornological vector space which has a compatible net. To show this, first we note that since the inductive limit is monomorphic 
\[ F = \bigcup_{n \in \N} F_n \]
and $F$ is separated when equipped with the direct limit bornology, see Proposition 6 at page 49 of \cite{H2} for a proof of this fact. Then, let $\cN_n$ be a net for $F_n$. Let's define 
\[ \cE(n) = \cN_1(n_1), \ \ \cE(n_1, \ldots, n_k) = \cN_{n_1}(n_2, \ldots, n_k).  \]
One can check directly that $\cE$ is a net on $F$ compatible with the bornology of $F$.

\item From the previous example it follows that every complete bornological vector space with countable base for its bornology has a net compatible with the bornology. In particular regular LB spaces and regular LF spaces have nets for their von Neumann bornology.

\item As an example of bornological vector space which cannot be endowed with a net, one can consider an infinite dimensional Banach space $E$ endowed with the bornology of pre-compact subsets, if the base field is locally compact, or the bornology of compactoid subsets if the base field is not locally compact. Let's denote $E^c$ the vector space $E$ equipped with this bornology, which is of convex type and complete (see examples 1.3 (9) and (10) for a proof of this fact for Archimedean base fields. The same argument works for any base field). It is also well known that the identity map $E^c \to E^b$ is bounded, but the two bornologies do not coincide in general. This also means that the bornology of $E^c$ cannot have a net because the fact that the identity map is bounded implies that, if $E^c$ would have a net, we could apply the Isomorphism Theorem \ref{thm:iso} to deduce that the identity map is bounded also in the other direction $E^b \to E^c$. Notice that, by previous examples, this also shows that the bornology of $E^c$ has not a countable base.

\end{enumerate}
\end{exa}

Let's move towards the proof of the Closed Graph Theorem, and before proving it let's prove some lemmata.

\begin{lemma} \label{lemma:absorbing}
	Let $E$ be and $F$ be $K$-vector spaces. Let $B \subset E$ be bounded, $C \subset F$ any subset and $f: E \to F$ a linear map. Then, $C$ absorbs $f(B)$ if and only if $f^{-1}(C)$ absorbs $B$.
\end{lemma}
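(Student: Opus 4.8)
The plan is to reduce the statement to a chain of elementary equivalences, so the first step is to make the definition of ``absorbs'' explicit. A subset $C \subset F$ absorbs a subset $S \subset F$ precisely when there exists $\la \in K^\times$ with $S \subset \la C$ (equivalently, $S \subset \mu C$ for all $\mu$ of sufficiently large absolute value; for a purely logical equivalence either formulation works, so I would fix the existential one). Thus the conclusion ``$C$ absorbs $f(B)$ iff $f^{-1}(C)$ absorbs $B$'' amounts to showing: there is $\la \in K^\times$ with $f(B) \subset \la C$ if and only if there is $\la \in K^\times$ with $B \subset \la f^{-1}(C)$. My strategy is to show that for a \emph{fixed} $\la \in K^\times$ the two inclusions are already equivalent, after which the statement about existence follows immediately.

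The key algebraic identity is that for a linear map $f$ and any $\la \in K^\times$ one has
\[ f^{-1}(\la C) = \la\, f^{-1}(C). \]
I would prove this by a direct element-chase: $x \in f^{-1}(\la C) \iff f(x) \in \la C \iff f(\la^{-1} x) \in C \iff \la^{-1} x \in f^{-1}(C) \iff x \in \la\, f^{-1}(C)$, where linearity of $f$ is used in the middle step and invertibility of $\la$ (so $\la^{-1}$ exists) is essential. The second ingredient is the standard image--preimage adjunction: for any map of sets, $f(B) \subset D \iff B \subset f^{-1}(D)$. Applying this with $D = \la C$ gives $f(B) \subset \la C \iff B \subset f^{-1}(\la C)$.

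Combining the two ingredients yields, for each fixed $\la \in K^\times$,
\[ f(B) \subset \la C \iff B \subset f^{-1}(\la C) = \la\, f^{-1}(C), \]
and quantifying over $\la \in K^\times$ gives exactly the claimed equivalence. I expect there to be no genuine obstacle here: the only point requiring care is keeping track of where linearity and the nonvanishing of $\la$ enter (both only in the identity $f^{-1}(\la C) = \la f^{-1}(C)$). I would also remark that the boundedness hypothesis on $B$ plays no role in the argument---the statement is a purely set-theoretic fact about linear maps---and is presumably recorded only to match the way the lemma is invoked later.
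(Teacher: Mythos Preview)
Your proof is correct and follows essentially the same route as the paper: both fix $\la \in K^\times$, use the image--preimage adjunction together with the identity $f^{-1}(\la C) = \la f^{-1}(C)$ (from linearity), and then quantify over $\la$. Your observation that the boundedness hypothesis on $B$ plays no role is also accurate.
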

\begin{proof}
This is a very basic property of linear maps. $C$ absorbs $f(B)$ means that there exists a $\la \in K^\times$ such that
\[ f(B) \subset \la C \]
hence
\[ B \subset f^{-1}(f(B)) \subset  f^{-1}(\la C) = \la f^{-1}(C). \]
On the other hand, if $B \subset \la f^{-1}(C)$ then
\[ f(B) \subset f( \la f^{-1}(C)) \subset \la  f( f^{-1}(C)) \subset \la C. \]
\end{proof}

\begin{lemma} \label{lemma:baire}
	Let $E$ be a $K$-Banach space and $F$ be a separated convex bornological vector space endowed with a net $\cN$, not necessarily supposed to be compatible with the bornology. Let $f: E \to F$ be a linear map, then there exists a sequence $\{ n_k \}_{k \in \N}$ of integers such that $f^{-1}(\cN(n_1, \ldots, n_k))$ is not meagre in $E$ for each $k \in \N$.
\end{lemma}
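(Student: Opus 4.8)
The plan is to exploit the two defining properties of a net together with the Baire Category Theorem applied to the complete space $E$. Recall that $\cN(\void) = F$ and, by property (3) of Definition \ref{defn:net}, $F = \bigcup_{n \in \N} \cN(n)$. Since $f$ is a map into $F$, pulling back gives $E = f^{-1}(F) = \bigcup_{n \in \N} f^{-1}(\cN(n))$. As $E$ is a $K$-Banach space it is a Baire space, so a countable union of its subsets cannot be meagre unless at least one member is nonmeagre; hence there exists $n_1 \in \N$ such that $f^{-1}(\cN(n_1))$ is nonmeagre in $E$.

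The construction then proceeds by induction. Suppose $n_1, \ldots, n_k$ have been chosen so that $f^{-1}(\cN(n_1, \ldots, n_k))$ is nonmeagre in $E$. Using property (3) once more at the node $(n_1, \ldots, n_k)$, we have
\[ \cN(n_1, \ldots, n_k) = \bigcup_{n \in \N} \cN(n_1, \ldots, n_k, n), \]
and applying $f^{-1}$ yields
\[ f^{-1}(\cN(n_1, \ldots, n_k)) = \bigcup_{n \in \N} f^{-1}(\cN(n_1, \ldots, n_k, n)). \]
Here is the step requiring a little care: a nonmeagre set is being written as a countable union, and I would invoke the fact that a countable union of meagre sets is meagre (the definition of meagreness), so the union on the right cannot consist entirely of meagre sets; otherwise $f^{-1}(\cN(n_1,\ldots,n_k))$ would itself be meagre, contradicting the inductive hypothesis. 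Therefore there exists $n_{k+1} \in \N$ with $f^{-1}(\cN(n_1, \ldots, n_k, n_{k+1}))$ nonmeagre in $E$, closing the induction.

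The sequence $\{n_k\}_{k \in \N}$ produced in this way satisfies the conclusion by construction. I expect the main conceptual point, rather than a genuine obstacle, to be the correct use of Baire category: the argument never needs the net to be compatible with the bornology (as the statement emphasizes), and it relies only on the completeness of $E$ to guarantee that $E$ is a Baire space, so that meagreness is not preserved under the relevant countable decompositions. The remaining verifications are the routine set-theoretic identities for preimages of unions, which commute with $f^{-1}$ without any hypothesis on $f$ beyond being a map of sets.
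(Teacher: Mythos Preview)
Your proof is correct and follows essentially the same approach as the paper: use $\cN(\void)=F$ and the recursive covering property of the net to write $E$ (and then each nonmeagre preimage) as a countable union, and invoke the Baire category theorem at every stage to select the next index. Your write-up is in fact slightly more explicit than the paper's in justifying the inductive step via ``a countable union of meagre sets is meagre.''
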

\begin{proof}
By the definition of net $F = \underset{n_1 \in \N}\bigcup \cN(n_1)$, so $E = \underset{n_1 \in \N}\bigcup f^{-1}(\cN(n_1))$ and since $E$ is a Baire space, it follows that there must exist a $n_1$ such that $f^{-1}(\cN(n_1))$ is not meagre in $E$. Then, we can apply the same reasoning to the relation $\cN(n_1) = \underset{n_2 \in \N}\bigcup \cN(n_1, n_2)$ obtaining a $f^{-1}(\cN(n_1, n_2))$, for some $n_2 \in \N$, which is not meagre in $E$, and inductively for any $k$ we get a $f^{-1}(\cN(n_1, \ldots, n_k))$ which is not meagre in $E$, for suitable $n_1, \ldots, n_k \in \N$.
\end{proof}

The next lemma is the key technical lemma to prove the Closed Graph Theorem for bornological spaces equipped with a net.

\begin{lemma} \label{lemma:net}
	Let $E$ be a $K$-Banach space and $F$ be a separated convex bornological vector space endowed with a net $\cN$ compatible with its bornology. Let $B \subset E$ denotes the open unit ball of $E$. If $f: E \to F$ is a morphism with bornologically closed graph, then there exist a sequence $(n_k)$ of integers such that $f(B)$ is absorbed in each $\cN(n_1, \ldots , n_k)$.
\end{lemma}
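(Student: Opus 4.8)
The plan is to combine the Baire-category input from Lemma~\ref{lemma:baire} with the compatibility of the net and the closedness of the graph to upgrade ``non-meagre preimage'' into ``$f(B)$ is absorbed''. First I would apply Lemma~\ref{lemma:baire} to obtain a sequence $(n_k)_{k \in \N}$ of integers such that each preimage $f^{-1}(\cN(n_1, \ldots, n_k))$ is non-meagre in the Banach space $E$. Since $\cN(n_1, \ldots, n_k)$ is a disk, its preimage is a disk in $E$; a standard Baire-category argument then shows that a non-meagre disk in a Banach space has closure with nonempty interior, so $\ol{f^{-1}(\cN(n_1, \ldots, n_k))}$ is a neighbourhood of $0$ in $E$, hence absorbs the open unit ball $B$. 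By Lemma~\ref{lemma:absorbing} this is the same as saying that $\ol{\cN(n_1, \ldots, n_k)}$ (topologically, via the map $f$) absorbs $f(B)$; but the difficulty is that we have closure on the wrong side, and we must remove it.

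The mechanism for removing the closure is exactly the ``successive approximation'' built into condition $(1)$ of Definition~\ref{defn:net_compatible}, together with the bornologically closed graph hypothesis. With the sequence $(n_k)$ fixed, write $s$ for this sequence and let $b(s): \N \to \R_{>0}$ be the sequence of radii furnished by compatibility. The key step is to show that, after possibly rescaling, $f(B)$ is \emph{literally} (not just up to closure) absorbed by each $\cN_{s,k} = \cN(n_1, \ldots, n_k)$. Fixing $k$, I would choose, for any $x \in B$, a first approximation $y_0 \in f^{-1}(\cN_{s,k})$ with $f(x) - f(y_0)$ controlled, using that $\ol{f^{-1}(\cN_{s,k+1})}$ is a neighbourhood of zero, and then iterate: produce $y_j \in f^{-1}(\cN_{s,k+j})$ so that the partial sums $\sum_{j} a_j y_j$ (with $|a_j| \le b(s)_{k+j}$) track $x$ more and more closely in $E$. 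The series $\sum_j a_j y_j$ converges in the Banach space $E$, say to some $y$, while $\sum_j a_j f(y_j)$ converges bornologically in $F$ by compatibility, with tail sums lying in $\cN_{s,k}$.

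At this point the closed graph hypothesis does the decisive work. The pairs $(y_0 + \cdots + y_{N}, f(y_0) + \cdots + f(y_{N}))$ lie in the graph $\Gamma(f)$, and as $N \to \infty$ the first coordinate converges to $y$ bornologically (being a Mackey-convergent series in a Banach space) and the second converges bornologically in $F$; since $\Gamma(f)$ is bornologically closed, the limit $(y, z)$ lies in $\Gamma(f)$, i.e.\ $z = f(y)$. Comparing this with the construction, $f(x)$ and $f(y)$ differ by an element controlled by $\cN_{s,k}$, and tracking constants shows $f(x) \in \la \cN_{s,k}$ for a fixed $\la \in K^\times$ independent of $x \in B$; since disks are absolutely convex, the scalar $\la$ can be absorbed and one concludes that $\cN_{s,k}$ absorbs $f(B)$ for every $k$.

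The hard part will be the bookkeeping of the successive approximation: choosing the approximants $y_j$ so that simultaneously (a) the $E$-series converges, (b) the coefficients $a_j$ respect the bounds $b(s)_{k+j}$ coming from the net, and (c) the residual $f(x) - \sum_j a_j f(y_j)$ is forced into $\cN_{s,k}$ by the tail condition $\sum_{j \ge n} a_j x_j \in \cN_{s,n}$. The non-Archimedean and Archimedean cases must be handled uniformly here, and the only subtlety is that the rescaling needed to pass from ``$f(x)-f(y)$ lies in a dilate of $\cN_{s,k}$'' to a bound uniform in $x \in B$ uses that $B$ is balanced and that the gauge estimates are homogeneous; this is where I expect the main technical effort, rather than any conceptual obstacle.
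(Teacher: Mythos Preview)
Your proposal is correct and follows essentially the same strategy as the paper: invoke Lemma~\ref{lemma:baire} for a sequence $(n_k)$ with non-meagre preimages, then run a successive-approximation argument using condition~(1) of Definition~\ref{defn:net_compatible} together with the bornologically closed graph to pass from the closure of $f^{-1}(\cN_{s,k})$ to $f^{-1}(\cN_{s,k})$ itself (up to a fixed dilation).

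The one organizational difference is worth noting. The paper does \emph{not} use your observation that a non-meagre disk in a Banach space has closure which is a $0$-neighbourhood; instead it only records that $\ol{A_k}$ (with $A_k = a_k f^{-1}(\cN_{s,k})$) contains some ball $D(b_k,\rho_k)$ with centre $b_k \in A_k$, and then carries the centres $b_k$ through the entire iteration as auxiliary shift terms, producing two companion series $\sum y_k$ and $\sum b_k$ in $E$ and their images $\sum z_k$, $\sum c_k$ in $F$. Your route --- using absolute convexity once to centre the balls at $0$ --- eliminates the $b_k$ and the second series entirely, so the bookkeeping you flag as ``the hard part'' is actually lighter than in the paper. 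Conversely, the paper's formulation makes the closure-removal step explicit as an inclusion $\ol{f^{-1}(\cN_{s,k_0})} \subset \gamma\, f^{-1}(\cN_{s,k_0})$, which cleanly separates the two halves of the argument (closure absorbs $B$; then remove closure) and sidesteps the rescaling issue you mention at the end when starting from an arbitrary $x \in B$.
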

\begin{proof}
By Lemma \ref{lemma:baire} we can produce a sequence $\{ n_k \}_{k \in \N}$ of integers such that $f^{-1}(\cN(n_1, \ldots, n_{k_0}))$ is not meagre in $E$ for each $k_0 \in \N$. It is sufficient to show that for each fixed $k_0$ the set $f^{-1}(\cN(n_1, \ldots, n_{k_0}))$ absorbs $B$ by Lemma \ref{lemma:absorbing}.

Let's denote by $s: \N \to \N$ the sequence obtained by applying Lemma \ref{lemma:baire}. By the compatibility of $\cN$ with the bornology on $F$ there exists a sequence $b(s)$ of positive real numbers such that for each sequence $a_k$ of elements of $K$ with $|a_k| \le b(s)_k$ and for all $x_k \in \cN_{s, k}$, the series $\underset{k \ge k_0}\sum a_k x_k$ converges bornologically to an element in $\cN_{s, k_0}$.
Let $\epsilon > 0$, we can choose $a_k$ such that
\[ \sum_{k \in \N} |a_k| \le \epsilon. \]
Let's denote $A_k = a_k f^{-1}(\cN_{s, k})$. Since $A_k$ is not meagre, then there is a point $b_k$ in the interior of $\ol{A_k}$ and a radius $\rho_k > 0$ such that the open ball 
\[ D(b_k, \rho_k) = b_k + \mu_k B, \ \ \ \mu_k \in K, \ \ \ |\mu_k| = \rho_k, \]
of radius $\rho_k$ and centred in $b_k$ is contained in $\ol{A_k}$. We can assume that $b_k \in A_k$. In fact, since $b_k \in \ol{A_k}$, then there exists $b_k' \in A_k$ such that $|b_k - b_k'|_E < \frac{\rho_k}{2}$. So,
\[ b_k' + D(0, \frac{\rho_k}{2}) = (b_k' - b_k) + (b_k + D(0, \frac{\rho_k}{2})) \subset \ol{A_k}.  \]
We may also suppose that $\rho_k \le \frac{1}{k}$ without loss of generality. So for a fixed $k_0$, we have 
\[ D(0, \rho_{k_0}) \subset \ol{A_{k_0}} - b_{k_0} \subset \ol{A_{k_0}} - \ol{A_{k_0}}.  \]
Thus, if $K$ is Archimedean we can deduce that
\[ D(0, \rho_{k_0}) \subset 2 \ol{A_{k_0}} \]
and if $K$ is non-Archimedean that
\[ D(0, \rho_{k_0}) \subset \ol{A_{k_0}}, \]
hence if we redefine $\rho_{k_0}$ being half its value when $K$ is Archimedean we can always suppose that
\[ D(0, \rho_{k_0}) \subset \ol{A_{k_0}}. \]
We will conclude the proof by showing that there exists $\gamma \in K^\times$, such that
\[ \ol{f^{-1}(\cN_{s, k_0})} \subset \gamma f^{-1}(\cN_{s, k_0})) \]
because then we can deduce that 
\[ \mu_{k_0} B = D(0, \rho_{k_0}) \subset \gamma A_{k_0} \then B \subset \mu_{k_0}^{-1} \gamma a_{k_0} \cN(n_1, \ldots, n_{k_0}),  \ \ \mu_{k_0} \in K, \ \ |\mu_{k_0}| = \rho_{k_0}. \]
Pick an element $x \in \ol{f^{-1}(\cN_{s, k_0})}$. There exists an element $y_{k_0} \in f^{-1}(\cN_{s, k_0})$ such that $|x - y_{k_0}|_E \le \rho_{k_0 + 1}$.  Therefore
\[ (y_{k_0} - x) + b_{k_0 + 1} \in D(b_{k_0 + 1}, \rho_{k_0 + 1}) \subset \ol{A_{k_0 + 1}}. \]
Then, we can find $y_{k_0 + 1} \in A_{k_0 + 1}$ such that $|x - y_{k_0}  - y_{k_0 + 1} + b_{k_0 + 1}|_E \le \rho_{k_0 + 2}$. So, by induction for every $N > k_0$ we can find elements $y_k \in \ol{A_k}$ such that
\[ |x - \sum_{k \ge k_0}^N y_k  + \sum_{k \ge k_0 + 1}^N b_k |_E \le \rho_{N + 1}  \]
Since $\rho_N \to 0$, the left-hand side converges to $0$. Let's show that the series 
\[ \sum_{k \ge k_0}^N z_k  + \sum_{k \ge k_0 + 1}^N c_k \]
where $z_k = f(y_k)$ and $c_k = f(b_k)$ converges to $f(x)$. By hypothesis $z_{k_0} \in \cN_{s, k_0}$ and $z_k, c_k \in a_k \cN_{s, k}$ for $k > k_0$, so by the compatibility of the net with the bornology of $F$ the series
\[ \sum_{k \ge k_0}^\infty z_k , \ \  \sum_{k \ge k_0 + 1}^\infty c_k \]
converge bornologically in $F$. Moreover, since $\cN_{s, k} \subset \cN_{s, k_0}$ for each $k > k_0$, one has that
\[ \sum_{k \ge k_0}^\infty z_k \in \cN_{s, k_0} + \gamma' \cN_{s, k_0} \]
\[ \sum_{k \ge k_0 + 1}^\infty c_k  \in \gamma' \cN_{s, k_0} \]
for a $\gamma' \in K$ such that $|\gamma'| \le \epsilon$. So
\[ y = \sum_{k \ge k_0}^\infty z_k - \sum_{k \ge k_0 + 1}^\infty c_k   \in \gamma \cN_{s, k_0} \]
where $\gamma \in K^\times$ can be chosen to have absolute value $1 + 2 \epsilon$ if $K$ is Archimedean and $1$ if $K$ is non-Archimedean, because $\cN_{s, k_0} + \gamma' \cN_{s, k_0} = \cN_{s, k_0}$ for $|\gamma'| < 1$ in this case.
Since the graph of $f$ is bornologically closed in $E \times F$, then we have that
\[ 0 = f(0) = f(x - \sum_{k \ge k_0}^\infty y_k  + \sum_{k \ge k_0 + 1}^\infty b_k) = f(x) - f(\sum_{k \ge k_0}^\infty y_k  + \sum_{k \ge k_0 + 1}^\infty b_k) = f(x) - y \]
so $f(x) \in \gamma \cN_{s, k_0}$ which implies that $x \in \gamma f^{-1}(\cN_{s, k_0})$, thus proving the lemma.
\end{proof}

\begin{thm} \label{thm:net}
	Let $E$ and $F$ be separated convex bornological vector spaces, where $E$ is complete and $F$ has a net $\cN$ compatible with its bornology. Then every linear map $f: E \to F$ with bornologically closed graph is bounded.
\end{thm}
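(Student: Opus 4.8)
The plan is to reduce the statement to the Banach-space situation already handled by Lemma \ref{lemma:net}, exploiting that completeness of $E$ allows us to test boundedness on Banach disks alone.

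First I would recall that, since $E$ is complete, the remark following Definition \ref{defn:complete_born} guarantees that the family $\cB_E$ of bounded disks of $E$ admits a final subfamily consisting of Banach disks. Consequently, to prove that $f$ is bounded it suffices to show that $f(B)$ is bounded in $F$ for every bounded Banach disk $B \subset E$. For such a $B$ the space $E_B$ is a $K$-Banach space whose unit ball is, up to a scalar, $B$ itself, and the inclusion $\iota_B : E_B \rhook E$ is bounded.

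Next I would check that the restriction $f \circ \iota_B : E_B \to F$ still has bornologically closed graph. Indeed, if $(x_n, f(x_n))$ converges bornologically to $(x,y)$ in $E_B \times F$, then $x_n \to x$ in $E_B$, and since $\iota_B$ is bounded and bounded maps preserve Mackey convergence, also $x_n \to x$ bornologically in $E$; together with $f(x_n) \to y$ in $F$ this gives $(x_n, f(x_n)) \to (x,y)$ in $E \times F$, so the closedness of $\Gamma(f)$ forces $y = f(x)$. Thus $\Gamma(f \circ \iota_B)$ is bornologically closed in $E_B \times F$. Now I would apply Lemma \ref{lemma:net} to the linear map $f\circ\iota_B : E_B \to F$ (its proof uses only the closed-graph hypothesis, not the boundedness of the map): it produces a sequence $s = (n_k)_{k\in\N}$ of integers such that $f(B)$ is absorbed by each $\cN_{s,k} = \cN(n_1,\dots,n_k)$, i.e. for every $k$ there is $\la_k \in K^\times$ with $f(B) \subset \la_k \cN_{s,k}$. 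Hence
\[ f(B) \subset \bigcap_{k \in \N} \la_k \cN_{s,k}, \]
and by condition $(2)$ of the compatibility of $\cN$ with the bornology (Definition \ref{defn:net_compatible}) the right-hand side is bounded in $F$. Therefore $f(B)$ is bounded, and since $B$ ranges over a final family of bounded disks of $E$, the map $f$ is bounded.

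Finally I would observe where the work really lies: the theorem itself is a short bookkeeping argument, all the analytic content — the Baire-category extraction of the sequence $(n_k)$ and the summation estimate inside the net — having already been carried out in Lemmata \ref{lemma:baire} and \ref{lemma:net}. The only point demanding a little care is the reduction step, namely that the closed-graph hypothesis descends to each Banach disk $E_B$; this rests on the elementary fact that bounded maps preserve Mackey convergence, so that closedness of $\Gamma(f)$ in $E\times F$ is inherited by $\Gamma(f\circ\iota_B)$ in $E_B\times F$.
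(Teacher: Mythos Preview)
Your proof is correct and follows essentially the same approach as the paper: reduce to the Banach-space case via the Banach disks of $E$, check that the closed-graph hypothesis descends to each $E_B$, invoke Lemma \ref{lemma:net}, and conclude via compatibility condition (2). Your treatment of the descent step is in fact slightly more careful than the paper's.
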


\begin{proof}
Notice that $E \cong \underset{B \in \cB_E}\limind E_B$, where $B$ runs through all bounded Banach disks, i.e. $E$ can be described as a monomorphic inductive limit of a family of Banach spaces. In order to prove that $f$ is bounded we only have to show that the compositions of $f$ with the canonical maps $i_B: E_B \to E$ are bounded. Indeed, the
graph of $f \circ i_B$ is bornologically closed, so we just need to show the theorem for $E$ supposed to a Banach space. To see that the graph of $f \circ i_B$ is closed one can consider a sequence $\{x_n\} \subset E_B$, converging to $x \in E$. Since $i_B$ is bounded then $i_B(x_n) \to i(x)$, in the sense of Makey, so $f(i_B(x_n)) \to f(i(x))$ in the sense of Mackey in $F$, because the graph of $f$ is bornologically closed. 

Therefore, suppose that $E$ is a $K$-Banach space with unit ball $B \subset E$.
By Lemma \ref{lemma:net} there exists a sequence $\{ n_k \}_{k \in \N}$ of integers such that $f(B)$ is absorbed in each $\cN(n_1, \ldots , n_k)$. It follows that there exists a sequence $\{ a_k \}_{k \in \N}$ of elements of $k$ such that 
\[ f(B) \subset \bigcap_{k \in \N} a_k \cN(n_1, \ldots, n_k) \]
and the latter subset is bounded in $F$, by the request of the net to be compatible with the bornology of $F$. So, we can conclude that $f(B)$ is bounded in $F$.
\end{proof}

We will discuss some applications of this theorem in the last section. Let's now see some stability properties of bornological nets, for which there is not much in literature known to the author.

\begin{prop} \label{prop:stability}
	Bornological nets have the following stability properties:
	\begin{enumerate}
		\item if $E = \underset{i \in \N}\limind E_i$ is a monomorphic direct limit of bornological vector spaces with nets then $E$ has a net;
		\item if $(E, \cB)$ has a net and $\cB'$ is another bornology on $E$ such that the identity $(E, \cB) \to (E, \cB')$ is bounded, then $(E, \cB')$ has a net;
		\item every closed subspace $F \subset E$ of a bornological vector space with a net has a net;
		\item if $E = \underset{i \in \N}\limpro E_i$ is the countable projective limits of bornological spaces with nets, then $E$ has a net;
		\item if $E = \underset{i \in \N}\bigoplus E_i$ is the countable coproduct of bornological spaces with nets, then $E$ has a net.
	\end{enumerate}
\end{prop}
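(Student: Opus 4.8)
The plan is to prove the five items by reducing each, as far as possible, to constructions already in hand, isolating the one genuinely new construction — a compatible net on a countable product — as the crux. Item (1) is essentially Example \ref{exa:nets}(2): the net $\cE$ built there from nets on the $F_n$ is already shown to be compatible with the direct limit bornology (using that each structural map $F_n\to E$ is bounded, so that bornological convergence and boundedness in any $F_n$ are inherited by $E$), and I would simply invoke that computation. For item (2) the \emph{same} net $\cN$ works verbatim: the axioms of Definition \ref{defn:net} involve only the $K$-vector space structure, and compatibility (Definition \ref{defn:net_compatible}) is preserved because $\id\colon(E,\cB)\to(E,\cB')$ is bounded — every $\cB$-bounded set is $\cB'$-bounded and every $\cB$-Mackey convergent sequence is $\cB'$-Mackey convergent with the \emph{same} witnessing bounded set. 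Thus condition (1) holds with the same $b(s)$ (the containments $\sum_{k\ge n}a_kx_k\in\cN_{s,n}$ are purely set-theoretic), and condition (2) holds since $\bigcap_k\la_k\cN_{s,k}$ is $\cB$-bounded hence $\cB'$-bounded; one tacitly assumes $(E,\cB')$ separated, as the very notion of having a net requires.

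For item (3), equip $F$ with the subspace bornology and set $\cN_F(n_1,\dots,n_k)=\cN(n_1,\dots,n_k)\cap F$. The three net axioms pass to the intersection at once, since intersecting with the subspace $F$ preserves disks, sends $\cN(\void)=E$ to $F$, and commutes with the unions in axiom (3). For compatibility, take $b(s)$ from $\cN$: for $x_k\in\cN_{s,k}\cap F$ the series $\sum a_kx_k$ converges bornologically in $E$ to some $\ell$, its partial sums lie in $F$, and — this is the single place the hypothesis enters — since $F$ is bornologically closed the Mackey limit $\ell$ lies in $F$; replacing the witnessing bounded set $B$ by $B\cap F$, which is legitimate because $F$ is a subspace so that $\mu(B\cap F)=\mu B\cap F$, upgrades this to bornological convergence \emph{inside} $F$, with tails in $\cN_{s,n}\cap F$. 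Condition (2) is immediate from $\bigcap_k\la_k(\cN_{s,k}\cap F)=\bigl(\bigcap_k\la_k\cN_{s,k}\bigr)\cap F$.

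The remaining two items I would reduce to products. For item (5), write $\bigoplus_{i\in\N}E_i=\limind_n(E_1\oplus\cdots\oplus E_n)$ as a monomorphic direct limit, so by item (1) it suffices to treat a finite coproduct, i.e. a finite product; there a simple interleaving net (odd net-indices feeding the first factor, even ones the second, and so on) works cleanly, the point being that in a finite product every factor is switched on after finitely many levels, so tails of the controlling series are simultaneously controlled in all factors. For item (4), I would realize the countable projective limit as the kernel of the bounded difference map $d\colon\prod_iE_i\to\prod_iE_i$, $d((x_i))=(x_i-u_i(x_{i+1}))$; this kernel is bornologically closed because bounded maps preserve Mackey convergence and $\{0\}$ is closed in the separated product, so by item (3) it is enough to produce a compatible net on the countable product $\prod_iE_i$.

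This last point is where I expect the real difficulty to lie, and it is the main obstacle. The natural candidate is a diagonal interleaving of the factor nets in the spirit of the Fr\'echet construction of Example \ref{exa:nets}(1) (which already covers $\prod_iK$). The net axioms are routine for such a construction, but compatibility condition (1) is delicate: a single level of \emph{any} net on a product can constrain only finitely many coordinates, so a factor-wise product of the $\cN^{(i)}$ leaves infinitely many tail coordinates unconstrained, and one must nevertheless secure \emph{simultaneous} Mackey convergence of $\sum_k a_kx_k$ in all factors at once — bornological convergence in the product demands one bounded set $\prod_iB_i$ and one threshold valid for every coordinate. The work therefore goes into choosing the diagonal bound $b(s)$ so that the contributions in the not-yet-activated coordinates stay summable; this is exactly the subtle step, which in the metrizable situation can be bypassed by passing through Proposition \ref{prop:metric} to coordinatewise topological convergence, and which in general I would handle by adapting the diagonal argument from the classical theory of r\'eseaux to an arbitrary complete, non-trivially valued $K$.
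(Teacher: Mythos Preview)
Your plan matches the paper's proof almost exactly: items (1)--(3) are handled identically, item (4) is reduced to the countable product via (3) just as you propose, and item (5) is deduced from (1) together with the product case (the paper invokes the full item (4) for the finite products $\prod_{i=1}^n E_i$ rather than treating finite products separately, but this is cosmetic). The product construction you correctly flag as the crux is indeed where the paper spends all its effort, giving an explicit diagonal encoding via bijections $f_n\colon\N^{n+1}\to\N$ to interleave the factor nets --- precisely the ``diagonal interleaving'' you anticipate.
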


\begin{proof}
\begin{enumerate}
\item This claim has already been discussed in Example \ref{exa:nets} (3).

\item Indeed, one can use the same given net on $(E, \cB)$ on the space $(E, \cB')$, which is easily seen to be compatible with $\cB'$ too.

\item If $E$ is equipped with a net $\cN$ compatible with its bornology and $F \subset E$ is bornologically closed, then the association 
\[ \cN_F(n_1, \ldots, n_k) = \cN(n_1, \ldots, n_k) \cap F \]
defines a net on $F$ which is compatible with the bornololgy induced by $E$ on $F$ because $F$ is bornologically closed.

\item The projective limit of a diagram $I \to \bBorn_K$ is the linear subspace
\[ P = \left \{ (x_i) \in \prod_{i \in I} E_i | f_{i, j}(x_j) = x_i \text{ for all } f_{i,j} \right \}, \]
endowed with the induced bornology. It is easy to check that if all spaces $E_i$ are separated then $P$ is bornologically closed in $\underset{i \in I}\prod E_i$. Thus, by the previous point it is enough to show that the product of any countable family of bornological vector spaces with nets is a bornological vector space with a net. So, let $\{ (E_i, \cN^{(i)}) \}_{i \in \N}$ be a countable family of bornological vector spaces endowed with nets. 

The product bornology on $\underset{i \in \N}\prod E_i$ is separated, see Proposition 5 on page 48 of \cite{H2}. For any $n$ let's fix bijections $f_n: \N^{n+1} \to \N$ and let $s: \N \to \N$ be any given sequence. For any $k \in \N$ there exists $(a_0^{(k)}, \ldots, a_{k}^{(k)}) \in \N^{k + 1}$ such that 
\[ s(k) = f_k(a_0^{(k)}, \ldots, a_{k}^{(k)}) \]
So, we define a family of sequences $\{ s_n : \N \to \N \}_{n \in \N}$ by
\[ s_n(k) = a_n^{(k + n)} \]
for each $k \in \N$. We set
\[ \cN_{s, k} \doteq \cN_{s_0, k}^{(0)} \times \cN_{s_1, k - 1}^{(1)} \times \ldots \times \cN_{s_k, 0}^{(k)} \times \prod_{i > k} E_i \]
to get a well-defined map $\cN: \underset{i \in \N}\bigcup \N^i \to \sP( \underset{i \in \N}\prod E_i)$, by setting $\cN(\void) = \underset{i \in \N}\prod E_i$ and $\cN(n_1, \ldots, n_k) = \cN_{s, k}$ when we choose $s: \N \to \N$ such that $s(i) = n_i$ for each $i \in \N$.

Let's check that $\cN$ is a net on $\underset{i \in \N}\prod E_i$. The properties (1) and (2) of Definition \ref{defn:net} are obvious. So let's check the last condition.

First, let's fix for any $n$ a sequence $t^{(n)}: \N \to \N$ such that $t^{(n)}(i) = s(i) = n_i$ for all $i \le k$, $t^{(n)}(k + 1) = n$, and the other values of $t^{(n)}$ can be freely chosen. Then, consider
\[ x = (x_k)_{k \in \N} \in \cN_{s, k} \times \prod_{i > k} E_i. \]
We have to show that there exists an $n$ such that
\[ x \in \cN_{t^{(n)}, k + 1} \times \prod_{i > k + 1} E_i. \]
Then, for any $n$ 
\[ \cN_{t^{(n)}, k + 1} = \cN_{s_0, k + 1}^{(0)} \times \cN_{s_1, k }^{(1)} \times \ldots \times \cN_{s_k, 1}^{(k)} \times \cN_{t^{(n)}_k, 0}^{(k + 1)} \]
so that the first $k$ components do not change changing $n$, whereas
\[ \cN_{t^{(n)}_k, 0}^{(k + 1)} = \cN^{(k + 1)} ((a_n)_k^{(k)}) \]
for the $(k + 1)$-tuple such $f_k((a_n)_0^{(k)}, \ldots, (a_n)_k^{(k)}) = n$. Thus, since $f_k$ is a bijection then $\underset{n \to \infty}\limsup (a_n)_k^{(k)} \to \infty$ and since $\cN^{(k + 1)}$ is a net we have that
\[ \bigcup_{n \in \N} \cN^{(k + 1)} ((a_n)_k^{(k)}) = E_{k + 1}, \]
which shows (3) of Definition \ref{defn:net}.

Then, let's prove that $\cN$ is compatible with the bornology of $\underset{i \in \N}\prod E_i$. For any $s: \N \to \N$ let's define
\[ b(s)_k \doteq  \min \{ b^{(i)} (s_i)_{k - i} | 0 \le i \le k \} \]
where $b^{(i)}$ is as in Definition \ref{defn:net_compatible} for $\cN^{(i)}$. Let's fix a sequence $\mu_k$ of elements of $K$ such that $|\mu_k| \le b(s)_k$. It is an easy consequence of the definition of product bornology that a series $\underset{k \in \N}\sum \mu_k x_k$, with $x_k \in \cN_{s, k}$, converges in $\underset{i \in \N}\prod E_i$ if and only if its components converge in $E_i$ for each $i \in \N$. Let's denote with $\pi_n: \underset{i \in \N}\prod E_i \to E_n$ the canonical projection, so for $l > n$
\[ \pi_n \left ( \sum_{k = 0}^l \mu_k x_k \right ) = \] 
\[ = \sum_{k = 0}^{n - 1} \mu_k \pi_n(x_k) + \sum_{k = n}^{l} \mu_{(k - n)}^{(n)} \frac{\mu_k}{\mu^{(n)}_{(k - n)}}  \pi_n(x_k) \]
where $\mu_{(k - n)}^{(n)} \in K$ is such that $|\mu_{(k - n)}^{(n)}| \le b^{(n)}(s_n)_{(k - n)}$. And notice that we can always have that
\[ \left | \frac{\mu_k}{\mu^{(n)}_{(k - n)}} \right | \le 1, \ \ \forall k \ge n. \]
Hence the series 
\[ \sum_{k = n}^{\infty} \mu^{(n)}_{(k - n)} \frac{\mu_k}{\mu^{(n)}_{(k - n)}}  \pi_n(x_k) \]
converges because $\cN^{(n)}$ is a net on $E_n$. 

Then, we need to check that for any given $m \in \N$ we have that
\[ \sum_{k \ge m} \mu_k x_k \in \cN_{s, m} \]
and since 
\[ \cN_{s, m} = \cN_{s_0, m}^{(0)} \times \cN_{s_1, m - 1}^{(1)} \times \ldots \times \cN_{s_m, 0}^{(m)} \times \prod_{i > m} E_i \]
we need to check only the first $m + 1$ components. So, we need to check that
\[ \pi_n \left ( \sum_{k = m}^\infty \mu_k x_k \right ) \in \cN_{s_n, m - n}^{(n)} \]
for $0 \le n \le m$. But for the above formula
\[ \pi_n \left ( \sum_{k = m}^\infty \mu_k x_k \right ) = \sum_{k = m}^{\infty} \mu_{(k - n)}^{(n)} \frac{\mu_k}{\mu^{(n)}_{(k - n)}}  \pi_n(x_k) \]
which belongs to $\cN_{s_n, m - n}^{(n)}$ because $\cN^{(n)}$ is a net on $E_n$ compatible with its bornology.

Last condition of Definition \ref{defn:net_compatible} is easy to check because a subset $B \subset \underset{i \in \N}\prod E_ni$ is bounded if and only if $\pi_i(B)$ is bounded in $E_i$.

\item Finally the last assertion of the proposition is a consequence of (1) and (4), because for any family of bornological vector spaces $\{ E_n \}_{n \in \N}$ one has the isomorphism
\[ \bigoplus_{n \in \N} E_n \cong \limind_{n \in \N} \prod_{i = 1}^n E_i \]
and the inductive system is monomorphic.
\end{enumerate}
\end{proof}

It is easy to see that the quotient of a bornological vector space endowed with a net is not necessarily endowed with a net. One can consider a Fr\'echet-Montel space $E$ and a quotient $E/F$, for a closed subspace $F \subset E$ which is not Fr\'echet-Montel. Examples of these kind of spaces are well known both for Archimedean and non-Archimedean $K$. So, the von Neumann bornology of $E$ coincides with the compact(oid) bornology of $E$, but the quotient bornology of $E/F$ is the compact(oid) one and does not coincide with the von Neumann one. Example \ref{exa:nets} (4) implies that $E/F$ does not admit a net compatible with its bornology.

\section{The closed graph theorem for bornological spaces with webs} \label{sec:closed_webs}

In this section we will prove the most general version of the Closed Graph Theorem for bornological vector space of the paper.

\begin{defn} \label{defn:born_web}
Let $E$ be a separated bornological vector space of convex type over $K$. A pair $(\cV, b)$ consisting of mappings $\cV : \underset{k \in \N}\bigcup \N^k \to \mathcal{P}(E)$ and $b : \N^\N \to (|K^\times|)^\N$ is called a \emph{bornological web} if all the conditions below hold:
\begin{enumerate}
\item The image of $\cV$ consists of disks.
\item $\cV(\void) = E$.
\item Given a finite sequence $(n_0, \dots, n_k)$, then $\cV(n_0, \dots, n_k)$ is absorbed by
      \[ \bigcup_{n \in \N} \cV(n_0, \dots, n_k, n). \]
\item For every $s: \N \to \N$ the series $\underset{k \in \N}\sum \la(s)_k x_k$, with $\la(s) \in K$, converges bornologically in $E$, whenever we choose $x_k \in \cV(s(0), \dots ,s(k))$ and $|\la(s)_k| = b(s)_k$.
\end{enumerate}
\end{defn}

As we did in previous section we will use the shorthand notation $\cV_{s,k} = \cV(s(0), \ldots , s(k))$. We define the following sets, which depend on $b$:
\[ 
\forall s: \N \to \N, \forall n \in \N : \wtilde{\cV}_{s,n} = \Gamma( \cV_{s,n} \cup \l \{ \sum_{k \ge n +1} \la(s)_k x_k | \forall k \ge n + 1 : x_k \in \cV_{s,k}, |\la(s)_k| = b(s)_k \r \} ) \]
where $\Gamma$ denotes the absolutely convex hull. Furthermore, let $\cB_{(\cV,b)}$ denote the convex linear bornology on $E$ which is generated by all subsets of the form 
\[ \bigcap_{k \in \N}  \mu_k \wtilde{\cV}_{s,k}, \]
where the $\{\mu_k \}_{k \in \N}$ is an arbitrary $K$-valued sequence.

\begin{defn}
A separated bornological vector space of convex type $E$ which is endowed with a bornology of the form $\cB_{(\cV, b)}$ for a bornological web $(\cV, b)$ on $E$ is called a \emph{webbed convex bornological space}.
\end{defn}

The following is the Closed Graph Theorem for webbed bornological vector spaces.

\begin{thm} \label{thm:web}
Let $E$ and $F$ be separated convex bornological vector spaces, where $E$ is complete and $F$ is endowed with a bornological web $(\cV, b)$. Then, every linear map $f: E \to F$ with bornologically closed graph is bounded for the bornology $\cB_{(\cV, b)}$.
\end{thm}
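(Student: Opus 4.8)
The plan is to run the proof of Theorem \ref{thm:net} almost verbatim, with the net $\cN$ replaced throughout by the auxiliary family of disks $\wtilde{\cV}_{s,k}$ attached to the web $(\cV,b)$. These sets are engineered precisely so that they carry the ``a tail of the convergent series lands back in the level set'' property which, in the net case, was supplied by compatibility and which drives Lemma \ref{lemma:net}. As in that proof I would first reduce to $E$ Banach: writing $E\cong\limind_{B}E_B$ over its bounded Banach disks, it suffices to bound each composite $f\circ i_B\colon E_B\to F$, and the graph of $f\circ i_B$ is again bornologically closed by the same argument as in Theorem \ref{thm:net}. So assume $E$ is a $K$-Banach space with open unit ball $B$.

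Next I would reprove Lemma \ref{lemma:baire} for webs. That argument uses only that $F=\cV(\void)$ and that each level is recovered from the next; here condition $(3)$ of Definition \ref{defn:born_web} gives this in the weaker ``absorbed by'' form, i.e.\ $\cV(n_0,\dots,n_k)\subset\mu\bigcup_{n}\cV(n_0,\dots,n_k,n)$ for some $\mu\in K^\times$. Pulling back along $f$ and absorbing the scalar $\mu$ (which does not affect meagreness) the Baire category argument goes through, producing a single sequence $s\colon\N\to\N$ with $f^{-1}(\cV_{s,k})$ non-meagre in $E$ for every $k$. It is essential to complete this step \emph{first}, since $\wtilde{\cV}_{s,k}$ depends on the whole sequence $s$ through the scalars $b(s)$, and only once $s$ is fixed are the auxiliary disks $\wtilde{\cV}_{s,k}$ determined; thus $\wtilde{\cV}$ is not literally a net and one cannot simply invoke Theorem \ref{thm:net}, but with $s$ frozen the remaining argument is insensitive to this.

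With $s$ fixed I would then transcribe Lemma \ref{lemma:net}. Choosing $\la(s)_k$ with $|\la(s)_k|=b(s)_k$ and $\sum_k|\la(s)_k|$ small, set $A_k=\la(s)_k f^{-1}(\cV_{s,k})$; each $A_k$ is non-meagre, so one extracts interior balls of radii $\rho_k\to 0$ inside $\ol{A_k}$ and runs the same inductive ball-chasing in the Banach space $E$, producing points $y_k$ with geometrically small errors. Applying $f$, the images $f(y_k)$ and the $f(b_k)$ lie in $\la(s)_k\cV_{s,k}$ for $k>k_0$; web condition $(4)$ guarantees the two resulting series converge \emph{bornologically} in $F$, and by the very definition of $\wtilde{\cV}_{s,k_0}$ as the absolutely convex hull of $\cV_{s,k_0}$ together with all such tails $\sum_{k\ge k_0+1}\la(s)_k x_k$, their limits lie in a bounded multiple $\gamma\wtilde{\cV}_{s,k_0}$. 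The bornologically closed graph then identifies $f(x)$ with that limit, yielding $\ol{f^{-1}(\cV_{s,k_0})}\subset\gamma f^{-1}(\wtilde{\cV}_{s,k_0})$ and hence, through Lemma \ref{lemma:absorbing}, that $f(B)$ is absorbed in each $\wtilde{\cV}_{s,k_0}$.

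Finally, absorption at every level supplies scalars $\mu_k\in K^\times$ with $f(B)\subset\bigcap_{k}\mu_k\wtilde{\cV}_{s,k}$, and this intersection is by definition a generator of $\cB_{(\cV,b)}$, so $f(B)$ is $\cB_{(\cV,b)}$-bounded and $f$ is bounded. The main obstacle is the third step: one must verify that the absolutely convex hull defining $\wtilde{\cV}_{s,k_0}$ genuinely absorbs the two convergent series produced by the ball-chasing (with the factor-$2$ corrections in the Archimedean case and their disappearance in the non-Archimedean case, exactly as in Lemma \ref{lemma:net}), and that it is the bornological rather than merely topological convergence from web condition $(4)$ that allows the closed-graph hypothesis to pin the limit down as $f(x)$.
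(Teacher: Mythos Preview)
Your proposal is correct and follows essentially the same route as the paper's proof: reduce to $E$ Banach, run a Baire argument through the web levels (using the ``absorbed by'' form of condition~(3)) to fix a single sequence $s$, then transcribe the ball-chasing of Lemma~\ref{lemma:net} with the disks $\wtilde{\cV}_{s,k}$ in place of $\cN_{s,k}$, and conclude that $f(B)\subset\bigcap_k\mu_k\wtilde{\cV}_{s,k}\in\cB_{(\cV,b)}$. One small slip: in the web case the scalars satisfy $|\la(s)_k|=b(s)_k$ with $b$ \emph{prescribed} by the data, so you cannot also arrange $\sum_k|\la(s)_k|$ small as in the net proof; fortunately this smallness is not needed, since the tails $\sum_{k\ge k_0+1}\la(s)_kx_k$ land in $\wtilde{\cV}_{s,k_0}$ by its very definition, and the paper simply collects the three pieces into $\wtilde{\cV}_{s,n}+\wtilde{\cV}_{s,n}-\wtilde{\cV}_{s,n}\subset 3\,\wtilde{\cV}_{s,n}$ (or $\wtilde{\cV}_{s,n}$ in the non-Archimedean case).
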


\begin{proof}
First, as in Theorem \ref{thm:net} we can reduce the proof to the case in which $E$ is a Banach space, because $f: E = \limind E_i \to F$ is bounded if and only if $f \circ \a_i$ is bounded for every $i$, where $\a_i: E_i \to \limind E_i$ are the canonical maps.

By condition (3) of Definition \ref{defn:born_web} we can use a reasoning similar to the one given in Lemma \ref{lemma:baire} to produce a sequence $s: \N \to \N$ such that $f^{-1}(\cV_{s, k})$ is not meagre in $E$ for any $k$. Moreover let's put $b_k = b(s)_k$, for all $k \in \N$. Since $(\cV, b)$ satisfies condition (4) of Definition \ref{defn:born_web}, the series $\underset{k \in \N}\sum \la_k x_k$ converges bornologically in $F$,
whenever we choose $x_k \in \cV_k$ and $\la_k \in K$ with $|\la_k| = b_k$.

Next, let $D(r)$ denote the ball of radius $r$ in $E$ centred in zero. If we can show that $f (D(1))$ is absorbed by $\wtilde{\cV}_{s, k}$ , or equivalently by Lemma \ref{lemma:absorbing}, that $D(1)$ is absorbed by $f^{-1} ( \wtilde{\cV}_{s, k} )$, for all $k \in \N$, then $f(D(1)) \in \cB_{(\cV,b)}$, and we are done.

Define $A_k = \la_k f^{-1} (\cV_{s, k} )$, for all $k \in \N$ and pick $\la_k \in K$ with $|\la_k| = b_k$. Since $A_k$ is not meagre and consequently not nowhere dense, the interior of $\ol{A}_k$ is not empty. Hence there exist $\ol{y}_k \in \ol{A}_k$ and $\rho_k < \frac{1}{k+1}$ such that $\ol{y}_k + D(2 \rho_k) \subset \ol{A}_k$. Since $\ol{y}_k \in \ol{A}_k$, there is a $y_k \in A_k$ such that $y_k \in \ol{y}_k + D(\rho_k)$, thus
\[ y_k + D(\rho_k) = (y_k - \ol{y}_k) + (\ol{y}_k + D(\rho_k)) \subset \ol{y}_k + D(2 \rho_k) \subset \ol{A}_k. \]

So, $D(\rho_k) \subset \ol{A}_k - y_k$ which implies $B(\rho_k) \subset 2 \ol{A}_k$ if $K$ is Archimedean and $D(\rho_k) \subset \ol{A}_k$ if $K$ is non-Archimedean. 

So fix $n \in \N$ and let $x \in \ol{f^{-1} ( \wtilde{\cV}_{s,n} )}.$ Then there is a $u_n \in f^{-1} ( \wtilde{\cV}_{s,n} )$ with $x - u_n \in D(\rho_{n +1}).$
\[ x - u_n + y_{n+1} \in  D(\rho_{n +1}) + y_{n+1} \subset  \ol{A}_{n+1}. \]
So, there is a $u_{n+1} \in A_{n+1}$ with $(x - u_n + y_{n+1}) - u_{n+1} \in D(\rho_{n +2})$ and inductively we find $u_k \in A_k$, $k > n$, such that we have 
\[ x - \sum_{k=n}^l u_k + \sum_{k=n+1}^l y_k \in D(\rho_{l +  1}), \] 
for $l > n$. Hence, the series $x - \underset{k = n}{\overset{\infty}\sum} u_k + \underset{k = n + 1}{\overset{\infty}\sum} y_k$ converges to $0$, since
$\rho_{l +1} \to 0$. Define $v_k = f(u_k )$ and $z_k = f (y_k )$. Then $v_n \in \cV_{s, n}$, $z_n \in \la_n \cV_{s,n}$, and $\forall k > n$ one has that  $v_k , z_k \in \la_k \cV_{s,k}$. It follows from (4) of Definition \ref{defn:born_web} that $\underset{k \in \N}\sum v_k$ and $\underset{k \in \N}\sum z_k$ converge
bornologically in $F$ and moreover
\[ y = \sum_{k \ge n} v_k  - \sum_{k \ge n + 1} z_k = v_n + \sum_{k \ge n + 1} v_k - \sum_{k \ge n + 1} z_k \in \wtilde{\cV}_{s, n} + \wtilde{\cV}_{s, n} - \wtilde{\cV}_{s, n}.  \]
Then, since $f$ has bornologically closed graph, we infer $0 = f (0) = f (x) - y$, i.e. $f (x) = y$ which shows that
\[ f(x)  \in f(\wtilde{\cV}_{s,n}) \]
if $K$ is non-Archimedean and 
\[ f(x)  \in 3 f(\wtilde{\cV}_{s,n}) \]
if $K$ is Archimedean.

Therefore, we can deduce that 
\[  D(\rho_k) \subset 2 \ol{A}_k = 2 \ol{\la_k f^{-1} (\cV_{s,k} )} \subset 2 \ol{\la_k f^{-1} (\wtilde{\cV}_{s,k} )} \subset 6 \la_k f^{-1} (\wtilde{\cV}_{s,k} ) \]
if $K$ is Archimedean or 
\[ D(\rho_k) \subset  \ol{A}_k =  \ol{\la_k f^{-1} (\cV_k )} \subset  \ol{\la_k f^{-1} (\wtilde{\cV}_{s,k} )} \subset  \la_k f^{-1} (\wtilde{\cV}_{s,k} ) \]
if $K$ is non-Archimedean, completing the proof.
\end{proof}

We conclude this section by proving some stability properties of bornological webs.

\begin{prop}
	Bornological webs have the following stability properties:
	\begin{enumerate}
		\item if $E = \underset{i \in \N}\limind E_i$ is a monomorphic direct limit of webbed bornological vector spaces then $E$ is webbed;
		\item if $(E, \cB)$ is webbed and $\cB'$ is another bornology on $E$ such that the identity $(E, \cB) \to (E, \cB')$ is bounded, then $(E, \cB')$ is webbed;
		\item every closed subspace of a webbed bornological vector space is a webbed bornological vector space;
		\item every countable projective limit of webbed bornological vector spaces is a webbed bornological space;
		\item every countable coproduct of webbed bornological vector spaces is a  webbed bornological vector space.
	\end{enumerate}
\end{prop}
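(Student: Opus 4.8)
The plan is to imitate the proof of Proposition~\ref{prop:stability} for nets, the only genuinely new feature being that for webs one must also track the coefficient function $b$ and the auxiliary sets $\wtilde{\cV}_{s,k}$, and must verify in each case that the \emph{derived} bornology $\cB_{(\cV,b)}$ of the web we build coincides with the target bornology. As in the nets case, I would reduce the five assertions to three genuinely different constructions: the monomorphic direct limit (1), the closed subspace (3), and the countable product; parts (4) and (5) then follow formally.

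For (2) I would keep the given web. If the identity $(E,\cB)\to(E,\cB')$ is bounded then $\cB \subseteq \cB'$, so every Mackey convergent series in $(E,\cB)$ is still Mackey convergent in $(E,\cB')$; hence condition (4) of Definition~\ref{defn:born_web} survives the coarsening and $(\cV,b)$ remains a bornological web, exactly as in the one-line nets argument. For (1), with $E=\limind_{i\in\N} E_i$ monomorphic and separated (as recalled in Example~\ref{exa:nets}(2), via Proposition~6 page~49 of \cite{H2}), I would let the first index select the summand and the remaining indices run the web of that summand, setting $\cV(n_1,\dots,n_k)=\cV^{(n_1)}(n_2,\dots,n_k)$ and shifting $b$ accordingly; condition (4) holds because after the first term the series lives in a single $E_{n_1}$ and converges there, hence in $E$ since $E_{n_1}\to E$ is bounded. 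For (3), with $F\subset E$ bornologically closed, I would take $\cV_F(n_1,\dots,n_k)=\cV(n_1,\dots,n_k)\cap F$ with the same $b$; closedness of $F$ guarantees that the limits and tails of the series in Definition~\ref{defn:born_web}(4) stay in $F$, so $\wtilde{(\cV_F)}_{s,k}=\wtilde{\cV}_{s,k}\cap F$ and $\cB_{(\cV_F,b)}$ is exactly the bornology induced by $E$ on $F$.

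For (4) I would first observe, precisely as in Proposition~\ref{prop:stability}(4), that the countable projective limit is a bornologically closed subspace of $\prod_{i\in\N} E_i$ once all $E_i$ are separated, so by (3) it suffices to web the countable product (itself separated, Proposition~5 page~48 of \cite{H2}). Here I would reuse the interleaving device of the nets proof: fixing bijections $f_n:\N^{n+1}\to\N$, decode a sequence $s$ into component sequences $s_n$ and set
\[ \cV_{s,k} = \cV^{(0)}_{s_0,k}\times\cV^{(1)}_{s_1,k-1}\times\cdots\times\cV^{(k)}_{s_k,0}\times\prod_{i>k}E_i, \qquad b(s)_k=\min_{0\le i\le k} b^{(i)}(s_i)_{k-i}. \]
The absorption property (3) of Definition~\ref{defn:born_web} is checked as for the net-condition in Proposition~\ref{prop:stability}(4). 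Part (5) then follows from (1) and (4) through the monomorphic identification $\bigoplus_{n\in\N}E_n\cong\limind_{n\in\N}\prod_{i=1}^n E_i$.

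The hard part will be the product construction inside (4). I expect the verification of Definition~\ref{defn:born_web}(4) to be the heaviest point: one must show $\sum_k\la(s)_k x_k$ converges bornologically in the product, which by definition of the product bornology reduces to componentwise convergence, and for each projection $\pi_n$ one rewrites the tail as a series governed by $b^{(n)}$, using $|\la(s)_k/\mu^{(n)}_{k-n}|\le 1$ exactly as in the nets argument, so that convergence follows from $\cV^{(n)}$ being a web. The conceptually new task, absent in the nets setting where one only matched a \emph{given} bornology, is to identify $\cB_{(\cV,b)}$ with the product bornology: I would show that the sets $\wtilde{\cV}_{s,k}$ again split as products whose $n$-th factor is controlled by $\wtilde{\cV}^{(n)}$, so that $\bigcap_k \mu_k \wtilde{\cV}_{s,k}$ is bounded precisely when each $\pi_n$ of it is bounded in $E_n$, which is exactly the defining property of the product bornology.
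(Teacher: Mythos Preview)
Your approach is exactly what the paper intends: its proof is a two-line deferral to Proposition~\ref{prop:stability} and to \cite{G}, Theorem~4.11, so spelling out the nets argument while tracking $b$ and the sets $\wtilde{\cV}_{s,k}$ is precisely the indicated route.

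There is, however, a genuine gap in your treatment of (2). You correctly observe that $(\cV,b)$ remains a bornological web on $(E,\cB')$, since Mackey convergence survives coarsening. But ``webbed'' in this paper means the bornology \emph{equals} $\cB_{(\cV,b)}$, and here the derived bornology does not change: the sets $\wtilde{\cV}_{s,n}$ are determined by Mackey limits which, by separatedness, are unique, so every tail sum that converged in $(E,\cB)$ converges to the same element in $(E,\cB')$; hence $\cB_{(\cV,b)}=\cB\subsetneq\cB'$ in general. The nets analogy you invoke is misleading: for nets, compatibility is a pair of conditions (Definition~\ref{defn:net_compatible}) that do transfer to a coarser bornology, whereas for webs one must actually realize $\cB'$ as a derived bornology. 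You recognize and address this requirement in (3) and (4), but the ``one-line'' transfer in (2) does not deliver it. A correct treatment of (2) needs a different web whose derived bornology is $\cB'$; the paper does not supply this either, deferring to \cite{G}.

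A smaller point in (3): your claimed equality $\wtilde{(\cV_F)}_{s,k}=\wtilde{\cV}_{s,k}\cap F$ only has the inclusion $\subset$ for free (tails built from $x_j\in\cV_{s,j}\cap F$ land in $F$ by closedness), while elements of $\wtilde{\cV}_{s,k}\cap F$ may arise from tails with $x_j\notin F$. This does not immediately break the argument, but the identification of $\cB_{(\cV_F,b)}$ with the subspace bornology needs more care than the equality you assert.
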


\begin{proof}
	The proof of this proposition is similar to the proofs of Proposition \ref{prop:stability}. For details we refer to \cite{G}, Theorem 4.11 where a full proof is given when the base field is Archimedean.
\end{proof}

\section{Applications}

In this last section we deduce some consequences from the theorems we have proved so far. We start by discussing the more classical ones: various forms of Isomorphisms Theorems and then we deduce de Wilde's Theorem for arbitrary base field. We conclude showing some applications to the theory of bornological algebras from \cite{Bam} and \cite{BaBeKr}. We remark, that one of the main differences in this exposition with respect to other works in literature, is that we work over any complete non-trivially valued field $K$, treating on the same footing, for as much as it is possibile, the Archimedean and the non-Archimedean sides of the theory.

\subsection*{Isomorphism theorems}

\begin{thm} \label{thm:iso}
	Let $f: E \to F$ be a bijective bounded morphism between separated bornological vector spaces with $F$ complete and $E$ with a net compatible with the bornology (resp. is webbed), then $F$ is an isomorphism of bornological vector spaces.
\end{thm}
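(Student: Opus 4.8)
The plan is to reduce the statement to the Closed Graph Theorems already established, applied not to $f$ but to its set-theoretic inverse. Since $f$ is a bounded bijection, in order to conclude that it is an isomorphism of bornological vector spaces it suffices to prove that $f^{-1} : F \to E$ is bounded.

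First I would recall, from the opening discussion of Section 1, that because $f$ is linear and bounded between separated bornological vector spaces its graph $\Gamma(f) = \{(x, f(x)) : x \in E\}$ is bornologically closed in $E \times F$ for the product bornology. Next I would observe that the coordinate-exchange map $\tau : E \times F \to F \times E$, $(x,y) \mapsto (y,x)$, is a bornological isomorphism: it is linear, bijective, and bounded in both directions, since it merely permutes the two factors of a product bornology. Because $f$ is bijective one has $\tau(\Gamma(f)) = \{(f(x), x) : x \in E\} = \{(y, f^{-1}(y)) : y \in F\} = \Gamma(f^{-1})$, and therefore $\Gamma(f^{-1})$ is bornologically closed in $F \times E$, being the image of a bornologically closed set under a bornological isomorphism.

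Now I would apply the appropriate Closed Graph Theorem to the linear map $f^{-1} : F \to E$. In the net case, Theorem \ref{thm:net} applies with $F$ (which is complete) playing the role of the complete source and $E$ (which carries a net compatible with its bornology) playing the role of the target; since $f^{-1}$ has bornologically closed graph, it is bounded. In the webbed case one argues identically using Theorem \ref{thm:web}, noting that since the bornology of $E$ is by hypothesis exactly of the form $\cB_{(\cV,b)}$, boundedness for that bornology is simply boundedness. In either case $f^{-1}$ is bounded, whence $f$ is an isomorphism of bornological vector spaces.

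I do not expect a genuine obstacle: the real content is packaged inside the Closed Graph Theorems, and the only thing to verify by hand is that passing from $\Gamma(f)$ to $\Gamma(f^{-1})$ preserves bornological closedness, which is immediate from the fact that swapping the two factors of a product bornology is a bornological isomorphism. The single point deserving care is to match the completeness and net/web hypotheses to the correct space: it is the target $F$ of $f$ that must be complete, so that it can serve as the complete \emph{source} for $f^{-1}$, and the source $E$ of $f$ that must carry the net or web, so that it is the \emph{target} for $f^{-1}$ — which is precisely how the hypotheses of the theorem are arranged.
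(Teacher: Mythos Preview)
Your proposal is correct and follows essentially the same approach as the paper: apply the Closed Graph Theorem to $f^{-1}$, using that $\Gamma(f^{-1})$ is the image of $\Gamma(f)$ under the coordinate swap. The paper's proof is just a more terse version of exactly this argument.
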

\begin{proof}
The map	$f^{-1}: F \to E$ is a linear map between bornological vector spaces whose domain is complete and whose codomain has a net compatible with the bornology (resp. is webbed). Then, the graph of $f^{-1}$ coincides with the graph of $f$, up to swap domain with the codomain, thus is a closed subset of $F \times E$. So, by Theorem \ref{thm:net} (resp. Theorem \ref{thm:web}) $f^{-1}$ is a bounded map.
\end{proof}

For any locally convex space $E$ let's denote with $(E, \cB_\Ban)$ the vector space $E$ endowed with the vector space bornology of convex type on $E$ generated by all bounded Banach disks of $E$ and $E_\uborn = (E, \cB_\Ban)^t$, where ${}^t$ is the functor which associate to every bornological vector space the topological vector space identified by the bornivorous subset. $E_\uborn$ is called the \emph{ultrabornologification} of $E$.

\begin{defn} \label{defn:born_ultra}
Let $E$ be a locally convex space over $K$. $E$ is called \emph{bornological} if $E \cong (E^b)^t$. $E$ is called \emph{ultrabornological} if $E \cong E_\uborn$.
\end{defn}

\begin{prop}
	Let $f: E \to F$ be a bijective continuous morphism between locally convex spaces. Suppose that $E$ ultrabornological with $E^b$ endowed with a net of webbed, $F$ is bornological and $F^b$ complete. Then, $f^{-1}$ is continuous.
\end{prop}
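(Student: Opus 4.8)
The plan is to reduce the continuity statement for locally convex spaces to the already-established Isomorphism Theorem \ref{thm:iso} for bornological vector spaces, by passing through the functors ${}^b$ and ${}^t$. First I would observe that $f: E \to F$ being continuous means precisely that $f^b: E^b \to F^b$ is a bounded morphism of bornological vector spaces, since ${}^b$ is functorial. The issue is that the bornologies relevant to the Isomorphism Theorem are not directly the von Neumann bornologies $E^b$ and $F^b$, but rather the bornologies that make $E$ ultrabornological and $F$ bornological; so I must be careful about which bornology carries the net/web and which carries completeness.

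Concretely, since $E$ is ultrabornological we have $E \cong E_\uborn = (E, \cB_\Ban)^t$, and the relevant bornological space on the $E$-side is $(E, \cB_\Ban)$, which by hypothesis is endowed with a net (or is webbed). On the $F$-side, $F$ is bornological means $F \cong (F^b)^t$, and the relevant bornological space is $F^b$, which by hypothesis is complete. I would therefore set up the bounded bijective map in the correct direction so that its domain is complete and its codomain carries the net/web: namely I would consider the induced bornological map $g: F^b \to (E, \cB_\Ban)$ obtained from $f^{-1}$. The key point to check is that $f^{-1}$ induces a \emph{bounded} such map, or at least that its graph is bornologically closed; once that is in place, Theorem \ref{thm:iso} applies with $F^b$ complete as domain and $(E, \cB_\Ban)$ net/webbed as codomain, yielding that $g$ is bounded, equivalently an isomorphism.

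The main obstacle, and the step requiring genuine care, is matching the bornologies across the adjunction so that boundedness of the bornological map translates back into continuity of $f^{-1}$ as a map of locally convex spaces. I would argue as follows: the Closed Graph Theorem \ref{thm:net}/\ref{thm:web} only requires that $f^{-1}$ have bornologically closed graph, which follows because $f$ is continuous (hence $f^b$ is bounded, hence has bornologically closed graph, as established at the start of Section 1), and the graph of $f^{-1}$ is the graph of $f$ with the factors swapped. Applying the relevant Closed Graph Theorem to $f^{-1}: F^b \to (E, \cB_\Ban)$ gives that $f^{-1}$ is bounded for these bornologies, i.e. $f^{-1}$ maps $\cB_\Ban$-bounded (equivalently von Neumann bounded, by the hypotheses) sets of $F$ into $\cB_\Ban$-bounded sets of $E$.

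Finally I would transport boundedness back to continuity using the hypotheses that $E$ is ultrabornological and $F$ is bornological. Applying the functor ${}^t$ to the bounded map $f^{-1}: F^b \to (E, \cB_\Ban)$ produces a continuous map $(F^b)^t \to (E, \cB_\Ban)^t = E_\uborn$; since $F \cong (F^b)^t$ because $F$ is bornological and $E \cong E_\uborn$ because $E$ is ultrabornological, this is exactly the assertion that $f^{-1}: F \to E$ is continuous, completing the proof. The delicate bookkeeping throughout is ensuring that the completeness hypothesis sits on the domain and the net/web hypothesis on the codomain of the map to which Theorem \ref{thm:iso} is applied, which is why $f^{-1}$ rather than $f$ is the map fed into the Closed Graph Theorem.
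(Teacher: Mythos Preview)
Your overall strategy---pass to bornological spaces, invoke the bornological Closed Graph/Isomorphism Theorem, then return via ${}^t$---is exactly the paper's argument (the paper simply cites Theorem \ref{thm:iso} rather than rerunning the CGT). However, there is a genuine slip in the bookkeeping that breaks the argument as written.

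The hypothesis places the net/web on $E^b$, the von Neumann bornology of $E$, not on $(E,\cB_\Ban)$. You assert that $(E,\cB_\Ban)$ ``by hypothesis is endowed with a net (or is webbed)'', but this is not what is assumed, and it does not follow: $\cB_\Ban$ is \emph{finer} than the von Neumann bornology, and the stability property for nets (Proposition \ref{prop:stability}(2)) only transfers a net to a \emph{coarser} bornology, not a finer one. So you cannot apply Theorem \ref{thm:net}/\ref{thm:web} with codomain $(E,\cB_\Ban)$.

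The fix is painless: work with $E^b$ throughout on the $E$-side. Apply Theorem \ref{thm:iso} (or equivalently the CGT) to the bounded bijection $f^b: E^b \to F^b$, with $E^b$ carrying the net/web and $F^b$ complete; this gives $(f^{-1})^b: F^b \to E^b$ bounded. Applying ${}^t$ yields a continuous map $(F^b)^t \to (E^b)^t$. Now $F$ bornological gives $F=(F^b)^t$, and---this is the point you were trying to sidestep---$E$ ultrabornological implies $E$ bornological, so $E=(E^b)^t$ as well. Hence $f^{-1}:F\to E$ is continuous. The ultrabornological hypothesis is used only through the weaker fact that $E$ is bornological; there is no need to bring $(E,\cB_\Ban)$ into the picture.
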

\begin{proof}
	Direct consequence of Theorem \ref{thm:iso}, Definition \ref{defn:born_ultra} and the fact that the functors ${}^b$ and ${}^t$ are adjoints.
\end{proof}

\begin{cor}
	Let $f: E \to F$ be a bijective continuous morphism $E$ and $F$ Fr\'echet then $f$ is isomorphism.
\end{cor}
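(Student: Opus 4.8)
The plan is to derive this corollary directly from the preceding proposition by verifying that both Fréchet spaces $E$ and $F$ satisfy the required hypotheses. First I would recall that a Fréchet space is, by definition, a complete metrizable locally convex space. The key observation is that every Fréchet space is ultrabornological: this is a classical fact, since metrizable locally convex spaces are bornological (the von Neumann bounded sets determine the topology), and completeness upgrades this to ultrabornological because in a complete metrizable space the bounded Banach disks generate the full topology. Thus $E$ being Fréchet makes it ultrabornological, and likewise $F$ being Fréchet makes it bornological.

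Next I would verify the two auxiliary conditions the proposition demands. For the net condition on $E^b$: by Example \ref{exa:nets} (1), every bornological Fréchet space $E^b$ carries a net compatible with its bornology, so $E^b$ has a net. For the completeness of $F^b$: since $F$ is a complete metrizable locally convex space, Corollary \ref{cor:metric} tells us precisely that $E$ complete topologically is equivalent to $E^b$ complete bornologically, applied here to $F$; hence $F^b$ is bornologically complete.

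With all hypotheses of the proposition met, I would conclude as follows. The map $f: E \to F$ is a bijective continuous morphism between locally convex spaces, $E$ is ultrabornological with $E^b$ endowed with a net, $F$ is bornological, and $F^b$ is complete. Applying the preceding proposition yields that $f^{-1}$ is continuous, and therefore $f$ is an isomorphism of Fréchet spaces. I do not expect any serious obstacle here; the only point requiring care is marshalling the correct classical facts (every Fréchet space is ultrabornological, and the bornological Fréchet space has a net via Example \ref{exa:nets}) and citing Corollary \ref{cor:metric} for the completeness transfer between $F$ and $F^b$. The content is entirely a matter of checking that the generic hypotheses specialize correctly to the Fréchet setting.
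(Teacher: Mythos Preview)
Your proposal is correct and follows exactly the paper's approach: verify that Fr\'echet spaces satisfy all hypotheses of the preceding proposition and apply it. The paper's own proof is the one-liner ``Fr\'echet spaces are bornological and ultrabornological,'' leaving the net on $E^b$ and the completeness of $F^b$ implicit, whereas you spell these out explicitly via Example \ref{exa:nets} (1) and Corollary \ref{cor:metric}; the content is identical.
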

\begin{proof}
	Fr\'echet spaces are bornological and ultrabornological.
\end{proof}

\begin{rmk}
Although one can use the Closed Graph Theorem to deduce the Isomorphism Theorems, it cannot be used to deduce Open Mapping Theorems for bornological spaces, \ie that under some hypothesis a surjective bounded map must be a quotient map.
\end{rmk}

We conclude this section discussing a result that is not a consequence of the Bornological Closed Graph Theorems we are discussing, but for which we think it is important to have a proof that extends the classical one given over $\R$ and $\C$ to any valued base field. This result is Buchwalter’s theorem, which is an analogous of the Open Mapping Theorem for bornological space. The interest for this result rely on the fact that such kind of results are very rare for bornological spaces. We need a definition and a couple of lemmata, which are adaptations of \cite{WAEL2}, section 1.5.

\begin{defn} \label{defn:compatible_completant}
Let $E$ be a vector space, and $B_1$, $B_2$ be two Banach disks of $E$. We say that $B_1$ and $B_2$ are \emph{compatible} if their intersection is a Banach disk.
\end{defn}

\begin{lemma} \label{lemma:banach_disks}
Two Banach disks $B_1$ and $B_2$ of a vector space $E$ are compatible if
$B_1 + B_2$ does not contain a non-zero vector subspace.
\end{lemma}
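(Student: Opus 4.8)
The plan is to reduce the statement to a completeness assertion about a diagonal subspace of a product of the two Banach spaces $E_{B_1}$ and $E_{B_2}$, and to feed the hypothesis on $B_1 + B_2$ in at exactly the point where one must identify two limits inside the ambient vector space $E$.

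First I would record the purely algebraic identifications. Set $C = B_1 \cap B_2$ and $D = B_1 + B_2$; both are disks (in the non-Archimedean case they are $K^\circ$-submodules, in the Archimedean case convex balanced sets, and $0 \in B_1 \cap B_2$ so $B_i \subset D$). A scaling argument on the gauges shows that, as subspaces of $E$, one has $E_C = E_{B_1} \cap E_{B_2}$, that the gauge of $C$ satisfies $\|a\|_C = \max(\|a\|_{B_1}, \|a\|_{B_2})$ for every $a \in E_C$, and that the inclusions $E_{B_i} \hookrightarrow E_D$ are norm-decreasing. Thus $C$ being a Banach disk amounts to the completeness of $E_C$ equipped with this max-type norm.

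Next I would realize $E_C$ isometrically as the diagonal $\Delta = \{(a,a) \mid a \in E_C\}$ sitting inside the space $G = E_{B_1} \times E_{B_2}$ equipped with the norm $\max(\|\cdot\|_{B_1}, \|\cdot\|_{B_2})$; this diagonal is precisely the set $\{(a,b) \in G \mid a = b \text{ in } E\}$, i.e. the kernel of the difference map $(a,b) \mapsto a - b$. Since $E_{B_1}$ and $E_{B_2}$ are Banach, $G$ is a Banach space, so it suffices to prove that $\Delta$ is closed in $G$: a closed subspace of a Banach space is Banach, and the isometry of the previous step then transports completeness back to $E_C$.

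The hard part will be the closedness of $\Delta$, namely showing that if $a_n = b_n$ in $E$ with $a_n \to a$ in $E_{B_1}$ and $b_n \to b$ in $E_{B_2}$, then $a = b$ in $E$; this is exactly where the hypothesis enters. Because the inclusions into $E_D$ are continuous, one has $a_n \to a$ and $b_n \to b$ also in $E_D$, and since $a_n = b_n$ the two limits coincide \emph{provided} $E_D$ is separated. But the largest vector subspace contained in a disk $D$ equals $\bigcap_{\lambda \in K^\times} \lambda D = \{x \mid \|x\|_D = 0\}$, so the gauge of $D$ is a genuine norm if and only if $D = B_1 + B_2$ contains no nonzero vector subspace, which is precisely our assumption. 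Hence $E_D$ is a separated normed space, limits there are unique, so $a = b$ in $E_D$ and therefore in $E$. This proves $\Delta$ is closed, hence complete, and the first isometry gives that $E_C$ is complete; the same isometry shows its gauge is a norm (if $\|a\|_C = 0$ then $\|a\|_{B_1} = 0$, so $a = 0$). Therefore $C = B_1 \cap B_2$ is a Banach disk and $B_1$, $B_2$ are compatible.
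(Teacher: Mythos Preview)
Your proof is correct and follows essentially the same approach as the paper: both identify $E_{B_1\cap B_2}$ with the (anti)diagonal inside the Banach space $E_{B_1}\times E_{B_2}$ and deduce that it is closed from the fact that $E_{B_1+B_2}$ is separated under the hypothesis. The paper phrases this via the short exact sequence $0\to E_{B_1}\cap E_{B_2}\to E_{B_1}\oplus E_{B_2}\to E_{B_1}+E_{B_2}\to 0$ and the equivalence ``quotient is normed $\Leftrightarrow$ kernel is closed'', while you argue closedness directly by pushing converging sequences into the separated space $E_{B_1+B_2}$; these are two renderings of the same idea.
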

\begin{proof}
We shall write $E_1 = E_{B_1}$ and $E_2 = E_{B_2}$. We let also $E_1 + E_2$ be the semi-normed space absorbed by $B_1 + B_2$ with the Minkowski functional associated to $B_1 + B_2$. We have the short exact sequence
\[ 0 \to E_1 \cap E_2 \to E_1 \oplus E_2 \to E_1 + E_2 \to 0, \]
where the first morphism maps $x \in E_1 \cap E_2$ to $(x, -x) \in E_1 \oplus E_2$ and the second morphism maps $(x, y) \in E_1 \oplus E_2$ to $x + y \in E_1 + E_2$. The space $E_1 + E_2$ is normed if and only if the kernel of the map $E_1 \oplus E_2 \to E_1 + E_2$ is closed and the kernel is closed if and only if its unit ball is a Banach disk. The unit ball of the kernel is the image of the unit ball of $E_1 \cap E_2$ by an injective map.
\end{proof}

\begin{lemma}(Grothendieck's lemma) 
Let $E$ be a vector space, let $B$ be a Banach disk in $E$ and $ \{ B_n \}_{n \in \N}$ be an increasing sequence of Banach disks of $E$ such that $B = \underset{n = 0}{\overset{\infty}\bigcup} B_n$. Then $B$ is absorbed by some $B_n$.
\end{lemma}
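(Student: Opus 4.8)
The plan is to transport everything into the Banach space $E_B$ and then combine a Baire category argument with the completeness of the spaces $E_{B_n}$, exactly as in the second half of an Open Mapping Theorem proof.

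First I would record two preliminary facts. Since the $B_n$ are increasing and $B = \bigcup_n B_n$, every finite $K$-linear combination of elements of $B$ already lies in a single $E_{B_N}$ (take $N$ to be the largest index needed); hence $E_B = \bigcup_{n \in \N} E_{B_n}$ as vector spaces. Moreover, $B_n \subseteq B$ forces the gauge of $B$ to be dominated by the gauge of $B_n$ on $E_{B_n}$, so each inclusion $E_{B_n} \hookrightarrow E_B$ is bounded; each $E_{B_n}$ is a Banach space by hypothesis, and $E_B$ is Banach because $B$ is a Banach disk. Now the Baire step: fixing $\alpha \in K$ with $|\alpha| > 1$ (possible as $K$ is non-trivially valued) we get a countable covering $E_B = \bigcup_{n,m \in \N} \alpha^m B_n$. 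As $E_B$ is a complete normed space, hence Baire, some $\ol{\alpha^m B_n}$ (closure in $E_B$) has nonempty interior; since scaling by $\alpha^m$ is a homeomorphism and the closure of a disk is a disk, $\ol{B_{n_0}}$ itself has nonempty interior for this index $n_0$, and a closed disk with nonempty interior contains a neighbourhood of $0$ (argued identically in the convex/balanced and the $K^\circ$-module cases). Thus there is $\lambda_0 \in K^\times$ with $\lambda_0 B \subseteq \ol{B_{n_0}}$.

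The final and most delicate step removes the closure using completeness of $E_{B_{n_0}}$, and this is where the main difficulty lies, since it must be carried out uniformly over every complete non-trivially valued $K$. Write $\|\cdot\|_B$ for the (norm) gauge of $B$ on $E_B$ and $\|\cdot\|_{B_{n_0}}$ for that of $B_{n_0}$ on $E_{B_{n_0}}$, and pick $\pi \in K$ with $0 < |\pi| < 1$. Given $x \in \lambda_0 B \subseteq \ol{B_{n_0}}$, I would approximate $x$ by some $x_1 \in B_{n_0}$ modulo $|\pi|\,\lambda_0 B$, then approximate the remainder by $x_2 \in \pi B_{n_0}$, and inductively produce $x_k \in \pi^{k-1} B_{n_0}$ with $x - \sum_{i \le k} x_i \to 0$ in $E_B$. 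Because $E_{B_{n_0}}$ is complete, the series $\sum_k x_k$ converges there to some $y$ whose $\|\cdot\|_{B_{n_0}}$-norm is bounded by a fixed multiple of $\|x\|_B$ (the geometric bound governed by $|\pi|$, with the ultrametric estimate replacing the triangle inequality when $K$ is non-Archimedean); convergence in $E_{B_{n_0}}$ implies convergence in $E_B$, so $x = y \in E_{B_{n_0}}$ with controlled norm. Hence $\lambda_0 B$ is contained in a fixed multiple of $B_{n_0}$, i.e. $B$ is absorbed by $B_{n_0}$, which is the assertion. The points requiring care are the choice of decay rate so the correcting series converges in both the Archimedean and non-Archimedean regimes, and verifying that each increment $x_k$ can indeed be selected in the prescribed scaled copy $\pi^{k-1} B_{n_0}$ directly from the inclusion $\lambda_0 B \subseteq \ol{B_{n_0}}$.
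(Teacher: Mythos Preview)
Your proposal is correct and follows essentially the same route as the paper: a Baire category argument in the Banach space $E_B$ shows some $\ol{B_{n_0}}$ contains a ball, and then a geometric successive-approximation scheme exploiting the completeness of $E_{B_{n_0}}$ removes the closure. The paper indexes the iteration slightly differently (writing $x_0 = y_0 + \lambda x_1$ with $|\lambda|\le \tfrac12$ and summing $\sum \lambda^j y_j$ in $E_{B_{n_0}}$) and applies Baire directly to the covering $B=\bigcup_n B_n$ rather than to $E_B=\bigcup_{n,m}\alpha^m B_n$, but these are cosmetic variations of the same argument.
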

\begin{proof}
It is an immediate consequence of Lemma \ref{lemma:banach_disks} that the disk $B$ is compatible with all the $B_n$ for all $n \in \N$, since one has that  $B_n \subset B$. The space $E_B$ is a Banach space therefore it is a Baire space. According to the Baire's Theorem, for some $n \in \N$, $\ol{B_n}$ has a non-empty interior, where the closure is taken in $E_B$. As $B_n$ is absolutely convex, $\ol{B_n}$  contains a ball $D(\a)$ of radius $\a > 0$ in the Banach space $E_B$.
Let $x_0 \in D(\a) \subset \ol{B_n}$. We can find $y_0 \in B_n$ and $x_1 \in D(\a) \subset \ol{B_n}$ such that $x_0 = y_0 + \la x_1$, with $\la \in K^\times$ and $|\la| \le \frac{1}{2}$. Next
we choose $y_1 \in B_n$ and $x_2 \in D(\a) \subset \ol{B_n}$ such that $x_1 = y_1 + \la x_2$, etc. For each $j \in \N$, we see that 
\[ x_0 = \sum_{l = 0}^l \la^j y_l + \la^{ j-1} x_j. \]
The series $\underset{l = 0}{\overset{\infty}\sum} \la^j y_l$ converges in the Banach space $E_{B_n}$ and in that space, the norm of the sum is at most equal to $2$. So the sum  $\underset{l = 0}{\overset{\infty}\sum} \la^j y_l$ belongs to $\la^{-1} B_n$. In the Banach space $E_B$, $\la^{ -j} x_k \to 0$. Thus
$x_0 \in \la^{-1}B_n$ and $D(\a) \subset \la^{-1} B_n$.
\end{proof}

\begin{thm} (Buchwalter’s theorem)
Let $E$ be a complete bornological vector space whose bornology has a countable basis and $F$ be a complete bornological vector space. Let $f: E \to F$ be a surjective bounded linear map. Then $f$ is a strict epimorphism.
\end{thm}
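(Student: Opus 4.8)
The plan is first to make precise what a strict epimorphism of bornological spaces is, and then to reduce the statement to a Banach-space assertion amenable to the two lemmata just proved. A bounded surjection $f$ is a strict epimorphism exactly when the bornology of $F$ agrees with the quotient (image) bornology through $f$; since $f$ is bounded the image bornology is automatically the coarser of the two, so the whole content is that every bounded disk of $F$ is \emph{absorbed} by $f(B)$ for some bounded disk $B$ of $E$. As $F$ is complete its bounded Banach disks are cofinal, so it suffices to treat an arbitrary bounded Banach disk $C\subseteq F$; and as $E$ is complete with a countable basis we may fix an increasing cofinal sequence $\{B_n\}_{n\in\N}$ of bounded Banach disks with $E=\bigcup_n E_{B_n}$. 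The goal thus becomes: produce $n\in\N$ and $\lambda\in K^\times$ with $C\subseteq f(\lambda B_n)$.

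Next I would pass to the Banach space $F_C$ and look at the traces $f(B_n)\cap F_C$. Because $f$ is surjective and $E=\bigcup_n E_{B_n}$, every vector of $F_C$ lies in some $f(E_{B_n})$, so the closed disks $\ol{f(B_n)\cap F_C}$ (closures taken in $F_C$) form an increasing sequence of Banach disks whose suitable dilations cover $C$. Invoking Grothendieck's lemma --- together with Lemma \ref{lemma:banach_disks}, which guarantees that the intersections of $C$ with these dilated disks are again Banach disks --- I would conclude that $C$ is absorbed by $\ol{f(B_{n_0})\cap F_C}$ for some index $n_0$; equivalently, this closed disk is a neighbourhood of $0$ in $F_C$. (One could equally reach this point by a direct Baire-category argument inside $F_C$, writing $F_C$ as a countable union of these closed dilated disks.)

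The decisive step is then to remove the closure, i.e. to pass from $\ol{f(B_{n_0})\cap F_C}$ to the honest image $f(B_{n_0})\cap F_C$. Here I would run the classical open-mapping iteration: given $y\in C$, approximate it by some $f(x_1)$ with $x_1\in \mu B_{n_0}\cap f^{-1}(F_C)$, so that the residual $y-f(x_1)$ lies in a smaller ball of $F_C$; approximate that by $f(x_2)$, and continue. The geometric decay of the residuals makes $\sum_k x_k$ converge in the Banach space $E_{B_{n_0}}$ to some $x\in \lambda B_{n_0}$, while $\sum_k f(x_k)$ converges to $y$ in $F_C$, hence bornologically in $F$. Since $f$ has bornologically closed graph and $F$ is separated, $f(x)=y$, whence $y\in f(\lambda B_{n_0})$ and therefore $C\subseteq f(\lambda B_{n_0})$, as required. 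As in the proofs of Lemma \ref{lemma:net} and Theorem \ref{thm:web}, the only point where the Archimedean and non-Archimedean cases differ is in the constants governing the ball manipulations (a factor $2$ in the Archimedean case, $1$ in the non-Archimedean one).

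The main obstacle is precisely this last upgrade from the closure to the image, and the bookkeeping that accompanies it: $f$ need not map $E_{B_n}$ into $F_C$, so one must work throughout with the traces $f(B_n)\cap F_C$ and check that the residuals, the approximants, and the summed series all stay inside $F_C$ while simultaneously controlling convergence in the two distinct Banach norms (that of $E_{B_{n_0}}$ and that of $F_C$). Making the geometric iteration converge in $E_{B_{n_0}}$ uses the completeness of $E$, and identifying its limit's image with $y$ uses both the separatedness of $F$ and the bornological closedness of the graph; this is the step where completeness of both spaces is genuinely indispensable.
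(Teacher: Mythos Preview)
Your approach is correct, but it is considerably more elaborate than the paper's. The decisive observation you are missing is that $f(B_n)$ is \emph{already} a Banach disk, so no closure and no subsequent open-mapping iteration is needed. Indeed, since $f$ is bounded and $F$ is separated, $\ker f\cap E_{B_n}$ is closed in the Banach space $E_{B_n}$, and one checks that $F_{f(B_n)}$ is isometric to the quotient $E_{B_n}/(\ker f\cap E_{B_n})$, hence Banach. With this in hand, Lemma~\ref{lemma:banach_disks} applies directly to the pair $f(B_n)$ and $C$ (their sum is bounded in the separated space $F$, so contains no non-zero subspace), giving that each $f(B_n)\cap C$ is a Banach disk. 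Since $C=\bigcup_{n}(f(B_n)\cap C)$, Grothendieck's lemma yields at once that $C$ is absorbed by some $f(B_n)$, and the proof ends there.

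Your route---take closures in $F_C$, apply Grothendieck (or Baire) to the closures, then run an open-mapping iteration to strip the closure and identify the limit via the closed graph of $f$---does go through, and it is essentially the argument one would need if $f(B_n)$ were merely a bounded disk. But here the two preparatory lemmata were set up precisely so that one can work with the honest images $f(B_n)$ from the start; the iteration, the simultaneous control in two Banach norms, and the appeal to graph-closedness are all superfluous once you notice that $f(B_n)$ is complete for its gauge.
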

\begin{proof}
	Let $\{ B_n \}_{n \in \N}$ be a basis of the bornology of $E$. We assume that the $B_n$ are Banach disks and that for all $n \in \N$, $B_n \subset B_{n+1}$. Let $C$ be a bounded Banach disk in $F$. Since $f$ is bounded, the subsets $f(B_n)$ are bounded Banach disks and $F = \underset{n = 0}{\overset{\infty}\bigcup} f(B_n)$. Lemma \ref{lemma:banach_disks} implies that for all $n \in \N$, the subset $f(B_n) \cap C$ is a Banach disk as $f(B_n) + C$, being bounded in $F$, does not contain any non-zero subspace. Moreover $C = \underset{n = 0}{\overset{\infty}\bigcup} (f(B_n ) \cap C)$. Then
Grothendieck’s lemma shows that $C$ is absorbed by one of the sets $f(B_n)$. It follows that there exist $\la \in K^\times$ and $n \in \N$ such that $C \subset \la f(B_n )$, which yields $C \subset f(\la B_n )$. Thus, we showed that the map $f$ is a strict epimorphism.
\end{proof}

\subsection*{De Wilde's Theorem}

Before proving our generalization of the de Wilde's Theorem, let's see how Theorem \ref{thm:web} generalizes Theorem \ref{thm:net}.

\begin{prop} \label{prop:net_web}
	Let $(E, \cB)$ be a separated convex bornological vector space and $\cN$ a
net on $E$ which is compatible with $\cB$. Then, for every $b : \N^\N \to |K^\times|^\N$ satisfying $(1)$ of Definition \ref{defn:net_compatible}, the couple  $(\cN, b)$ is a bornological web on $E$ such that 
\[ \cB_{(\cN,b)} \subset \cB \]
\end{prop}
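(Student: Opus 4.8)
The plan is to verify, in turn, the four axioms of Definition \ref{defn:born_web} for the pair $(\cN, b)$, then to compute the auxiliary sets $\wtilde{\cN}_{s,n}$ explicitly, and finally to observe that this computation collapses the generating family of $\cB_{(\cN,b)}$ into exactly the intersections controlled by condition (2) of Definition \ref{defn:net_compatible}. The whole argument is essentially an unwinding of definitions, once one notices that the tail-membership clause of net-compatibility makes the ``web correction'' trivial.

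First I would check that $(\cN, b)$ is a bornological web. Axioms (1) and (2) of Definition \ref{defn:born_web} coincide verbatim with axioms (1) and (2) of Definition \ref{defn:net}, so they are immediate. For axiom (3) I would note that axiom (3) of Definition \ref{defn:net} gives the \emph{equality} $\cN(n_0, \dots, n_k) = \bigcup_{n \in \N} \cN(n_0, \dots, n_k, n)$, and an equality trivially entails absorption (take the scalar to be $1$). For axiom (4) I would invoke the hypothesis that $b$ satisfies condition (1) of Definition \ref{defn:net_compatible}: choosing $\la(s)_k$ with $|\la(s)_k| = b(s)_k \le b(s)_k$, the series $\sum_{k} \la(s)_k x_k$ converges bornologically for every choice $x_k \in \cN_{s,k}$. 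Hence $(\cN, b)$ is a web.

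The heart of the proof is the identity $\wtilde{\cN}_{s,n} = \cN_{s,n}$. Here I would use the tail-membership part of condition (1) of Definition \ref{defn:net_compatible}: for $x_k \in \cN_{s,k}$ with $|\la(s)_k| = b(s)_k$ one has $\sum_{k \ge n+1} \la(s)_k x_k \in \cN_{s,n+1}$. Since axiom (3) of Definition \ref{defn:net} yields the inclusions $\cN_{s,n+1} \subset \cN_{s,n}$, the entire set over which the absolutely convex hull is taken in the definition of $\wtilde{\cN}_{s,n}$ is already contained in the disk $\cN_{s,n}$. As $\cN_{s,n}$ is absolutely convex, its hull is itself, so $\wtilde{\cN}_{s,n} = \Gamma(\cN_{s,n}) = \cN_{s,n}$.

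Finally I would conclude the inclusion of bornologies. With the identification above, every generator of $\cB_{(\cN,b)}$ has the form $\bigcap_{k \in \N} \mu_k \cN_{s,k}$, and condition (2) of Definition \ref{defn:net_compatible} asserts precisely that all such intersections are bounded in $(E, \cB)$. Since $\cB$ is itself a convex vector bornology containing every generator of $\cB_{(\cN,b)}$, minimality of the generated bornology forces $\cB_{(\cN,b)} \subset \cB$. The one step demanding genuine care, and the only place where net-compatibility (as opposed to the weaker web conditions) is really used, is the tail-membership inclusion powering $\wtilde{\cN}_{s,n} = \cN_{s,n}$; everything else is formal.
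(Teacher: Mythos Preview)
Your proof is correct and follows the same approach as the paper's; you have simply spelled out in detail what the paper asserts in two sentences, in particular making explicit the identity $\wtilde{\cN}_{s,n} = \cN_{s,n}$ that the paper leaves implicit when it says the inclusion $\cB_{(\cN,b)} \subset \cB$ ``follows directly from condition (2)''.
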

\begin{proof}
	The first three conditions of the definition of bornological web are direct consequences of the definition of net. The last one is imposed by hypothesis and by condition $(1)$ of Definition \ref{defn:net_compatible}. So, we need only to check that $\cB_{(\cN,b)} \subset \cB$. And this follows directly from condition $(2)$ of  Definition \ref{defn:net_compatible}.
\end{proof}

\begin{cor}
	Theorem \ref{thm:net} is consequence of \ref{thm:web}.
\end{cor}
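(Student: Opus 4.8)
The plan is to feed the data of Theorem \ref{thm:net} into Theorem \ref{thm:web}, using Proposition \ref{prop:net_web} as the bridge. So let $E$, $F$ be separated convex bornological vector spaces with $E$ complete, let $\cN$ be a net on $F$ compatible with its bornology $\cB$, and let $f \colon E \to F$ be a linear map with bornologically closed graph. The first step is to manufacture a function $b \colon \N^\N \to |K^\times|^\N$ satisfying condition $(1)$ of Definition \ref{defn:net_compatible}. Compatibility of $\cN$ already supplies, for each $s \colon \N \to \N$, a sequence $b(s) \colon \N \to \R_{>0}$ with the required convergence property; to land inside $|K^\times|$ I would simply replace each $b(s)_k$ by the absolute value of some $\la \in K^\times$ with $|\la| \le b(s)_k$, which exists because $K$ is non-trivially valued. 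Shrinking the bounds only weakens condition $(1)$ (it restricts to fewer admissible coefficients), so the new $b$ still satisfies it.

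With such a $b$ fixed, Proposition \ref{prop:net_web} tells us that $(\cN, b)$ is a bornological web on $F$ and that $\cB_{(\cN,b)} \subset \cB$. Crucially, the ambient bornology with respect to which the web is defined — and with respect to which the convergence in condition $(4)$ of Definition \ref{defn:born_web} takes place — is exactly the original bornology $\cB$ of $F$, which is also the bornology relative to which the graph of $f$ is assumed closed. Hence $(F, \cB)$ together with the web $(\cN, b)$ satisfies all the hypotheses of Theorem \ref{thm:web}, with precisely the same closed-graph assumption. Applying Theorem \ref{thm:web} we conclude that $f$ is bounded for the bornology $\cB_{(\cN,b)}$.

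It remains to transport this boundedness back to $\cB$. Since $\cB_{(\cN,b)} \subset \cB$, every subset of $F$ which is bounded for $\cB_{(\cN,b)}$ is a fortiori bounded for $\cB$; thus $f(B) \in \cB$ for each bounded $B \subset E$, which is exactly the boundedness asserted in Theorem \ref{thm:net}. I expect no serious obstacle here: the whole argument is a bookkeeping of which bornology is in play, and the only genuinely nontrivial inputs — that $(\cN,b)$ is a web and that $\cB_{(\cN,b)} \subset \cB$ — are already packaged in Proposition \ref{prop:net_web}. The single point warranting care is the verification that the closed-graph hypotheses of the two theorems coincide, \ie that both refer to Mackey closedness in $E \times F$ for the original bornology of $F$; once this is observed, the corollary is immediate.
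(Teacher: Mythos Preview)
Your proposal is correct and follows exactly the route the paper takes: the paper's proof is the single line ``Direct consequence of Proposition \ref{prop:net_web}'', and you have simply unpacked what that entails, including the minor technical point of adjusting $b(s)$ to take values in $|K^\times|$ and the transport of boundedness from $\cB_{(\cN,b)}$ back to $\cB$ via the inclusion.
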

\begin{proof}
Direct consequence of Proposition \ref{prop:net_web}.
\end{proof}

Then, we need to introduce the topological version of the notion of web.

\begin{defn} \label{defn:top_webs}
Let $E$ be a Hausdorff locally convex space.
A map $\cW: \underset{k \in \N}\bigcup \N^k \to \sP(E)$ is called a \emph{topological web} if 
\begin{enumerate}
\item the image of $\cW$ consists of absolutely convex sets;
\item $\cW(\void) = E$;
\item Given a finite sequence $(n_0, \ldots, n_k)$, then $\cW(n_0, \ldots , n_k)$ is absorbed by
\[ \bigcup_{ n \in \N} \cW(n_0, \ldots, n_k, n). \]
\item  for every finite sequence $(n_0, \ldots , n_k, n_{k+1})$ one has
\[ \cW(n_0, \ldots , n_k, n_{k+1}) + \cW(n_0, \ldots , n_k, n_{k+1}) \subset \cW(n_0, \ldots , n_k). \]
\end{enumerate}
A separated locally convex space $E$ that carries a topological web is called
\emph{webbed locally convex space}.
Moreover, we say that $\cW$ is \emph{completing} if the following condition is satisfied: For every $s : \N \to \N$ and for every choice of $y_k \in \cW(s(1), \ldots, s(k))$ the series
\[ \sum_{k \in \N} y_k \]
converges topologically in $E$.
\end{defn}

\begin{rmk} \label{rmk:la}
Notice that condition (4) of the last definition when $K$ is non-Archimedean reduces to
\[ \cW(n_0, \ldots , n_k, n_{k+1}) \subset \cW(n_0, \ldots , n_k) \]
because in this case $\cW(n_0, \ldots , n_k, n_{k+1}) + \cW(n_0, \ldots , n_k, n_{k+1}) = \cW(n_0, \ldots , n_k, n_{k+1})$, and reduces to the condition
\[ 2 \cW(n_0, \ldots , n_k, n_{k+1}) \subset \cW(n_0, \ldots , n_k) \]
when $K$ is Archimedean. Hence, from here on we define the constant 
\[ \la = \begin{cases}  2 & \text{ if $K$ is Archimedean } \\ 1 & \text{ if $K$ is non-Archimedean }
\end{cases}. \]
\end{rmk}

Also for topological webs we will use the notation introduced in previous sections:

\[ \cW_{s,k} \doteq \cW(s(0), \ldots , s(k)), \text{ where } s: \N \to \N. \]

We need some technical lemmata. The following is the generalization of Proposition 5.2.1 of \cite{J}.

\begin{lemma} \label{lemma:top_equiv}
Let $E$ be a locally convex space and $\cW$ a topological web on $E$, then $\cW$ is completing if and only if for any open $0$-neighborhood $U \subset E$ and any $s: \N \to \N$ there exists a $k \in \N$ such that $\cW_{s, k} \subset U$.
\end{lemma}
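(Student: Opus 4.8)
The plan is to prove the two implications separately. The forward direction---that a completing web satisfies the shrinking condition---is best handled by contraposition. Suppose that for some open $0$-neighborhood $U$ and some $s : \N \to \N$ no $\cW_{s,k}$ is contained in $U$. Then for every $k$ I can choose $y_k \in \cW_{s,k}$ with $y_k \notin U$. Since $\cW$ is completing, the series $\sum_{k \in \N} y_k$ converges topologically, so its partial sums converge and hence the general term satisfies $y_k \to 0$; in particular $y_k \in U$ for $k$ large, contradicting $y_k \notin U$. This direction uses only the definition of completing and the fact that a convergent series has vanishing terms, and requires no further hypotheses.

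For the reverse direction I would extract the additive structure of the web from condition $(4)$ of Definition \ref{defn:top_webs}. First note that each $\cW_{s,k}$ is absolutely convex, hence contains $0$, so $(4)$ gives both the decrease $\cW_{s,k+1} \subset \cW_{s,k+1} + \cW_{s,k+1} \subset \cW_{s,k}$ along a fixed branch, and the telescoping estimate
\[ \sum_{k=n}^{m} y_k \in \cW_{s,n-1} \qquad (m \ge n \ge 1, \ y_k \in \cW_{s,k}). \]
I would prove this by induction on $m-n$: the base case $m=n$ is exactly the decrease inclusion just noted, and the inductive step writes $\sum_{k=n}^m y_k = y_n + \sum_{k=n+1}^m y_k \in \cW_{s,n} + \cW_{s,n} \subset \cW_{s,n-1}$, the middle membership being the inductive hypothesis applied to the pair $(n+1,m)$. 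Now, given any $0$-neighborhood $U$, the shrinking hypothesis yields an $N$ with $\cW_{s,N} \subset U$; then for $m \ge n \ge N+1$ the tail $\sum_{k=n}^m y_k$ lies in $\cW_{s,n-1} \subset \cW_{s,N} \subset U$, so the partial sums of $\sum_k y_k$ form a Cauchy sequence and the series converges. This shows that $\cW$ is completing.

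The main point of the argument is the telescoping estimate, and the only subtlety is that condition $(4)$ should be invoked in its additive form $\cW_{s,k+1} + \cW_{s,k+1} \subset \cW_{s,k}$ rather than through the rescaled versions recorded in Remark \ref{rmk:la}: phrasing it additively makes the constant $\la$ irrelevant and lets the Archimedean and non-Archimedean cases be treated uniformly, since in both the only properties used are $0 \in \cW_{s,k}$ and the sum inclusion of $(4)$. The expected obstacle is essentially bookkeeping, namely keeping the index shift by one in $\sum_{k=n}^m y_k \in \cW_{s,n-1}$ consistent with the indexing convention $\cW_{s,k} = \cW(s(0), \ldots, s(k))$ appearing in the completing condition, together with ensuring that the passage from the Cauchy property of the partial sums to genuine topological convergence in $E$ is legitimate in the ambient space.
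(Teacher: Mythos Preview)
Your argument matches the paper's essentially line for line: the forward direction is handled by contraposition via the observation that convergence of $\sum_k y_k$ forces $y_k \to 0$, and the reverse direction by iterating condition~(4) to obtain the telescoping containment $\sum_{n=1}^p y_{k+n} \in \cW_{s,k}$ for all $p$, which places every tail in a prescribed neighborhood $U$ once $k$ is large. The concern you flag about passing from the Cauchy property to actual convergence is not resolved in the paper's proof either; the paper simply records that the tails lie in $U$ and stops there, so on this point you are no less complete than the source.
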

\begin{proof}
Suppose that $\cW$ is completing. Then every sequence of elements $y_k \in \cW_{s, k}$ must be a zero sequence. Suppose that there exists a $0$-neighborhood such that for every $k \in \N$ one has $\cW_{s, k} \not\subset U$. In this way we can construct a sequence $y_k \in \cW_{s, k} - U$ which cannot be a zero sequence, thus $\cW$ cannot be completing. 

For the reverse implication, consider $s: \N \to \N$ and $y_k \in \cW_{s, k}$ for each $k \in \N$. So, for each $0$-neighborhood $U \subset E$ we can find a $k_0 \in \N$ such that $\cW_{s, k_0} \subset U$, therefore $\cW_{s, k} \subset U$ for all $k \ge k_0$. Applying inductively (4) of Definition \ref{defn:top_webs} we get that for any $m, k \in \N$, with $k \ge k_0$
\[ \sum_{n = 1}^p y_{k + n} \in \cW_{s, k + 1} + \ldots + \cW_{s, k + p} \subset \cW_{s, k} \subset U. \]
This shows that the sequence of partial sums $\underset{n = 1}{\overset{p}\sum} y_{k + n}$ for $p \to \infty$, is a zero sequence for the topology of $E$.
\end{proof}

\begin{lemma} \label{lemma:new_web}
Let $E$ be a Hausdorff locally convex space which is endowed with
a topological web $\cW$. Then the map $\cV: \bigcup_{k \in \N} \N^k \to \sP(E)$, defined by
\[ \cV (n_0 , \ldots , n_k ) := \frac{1}{\la^k} \cW (n_0 , \ldots , n_k ), \]
is again a topological web on $E$, where $\la \in K$ is as in Remark \ref{rmk:la}. Moreover, if $\cW$ is completing then also $\cV$ is.
\end{lemma}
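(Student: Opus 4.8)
Lemma \ref{lemma:new_web}.

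The goal is to verify that the rescaled map $\cV(n_0,\ldots,n_k) = \frac{1}{\la^k}\cW(n_0,\ldots,n_k)$ satisfies the four conditions of Definition \ref{defn:top_webs}, plus the preservation of the completing property. The plan is to check each condition directly, since the rescaling factors $\frac{1}{\la^k}$ are chosen precisely to make condition (4) hold with the cleaner inclusion. Conditions (1) and (2) are immediate: each $\cV(n_0,\ldots,n_k)$ is a scalar multiple of a disk, hence a disk (disks are stable under scaling by $\la^{-1}$ since $\la \in K^\times$), and $\cV(\void) = \frac{1}{\la^0}\cW(\void) = E$.

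For condition (3), I would note that $\cW(n_0,\ldots,n_k)$ is absorbed by $\bigcup_n \cW(n_0,\ldots,n_k,n)$; multiplying both sides by the fixed scalar $\la^{-k}$ (respectively $\la^{-(k+1)}$ on the right) preserves absorption, so $\cV(n_0,\ldots,n_k)$ is absorbed by $\bigcup_n \cV(n_0,\ldots,n_k,n)$. The key computation is condition (4): starting from the topological-web inequality $\cW(n_0,\ldots,n_{k+1}) + \cW(n_0,\ldots,n_{k+1}) \subset \cW(n_0,\ldots,n_k)$, which in the unified notation of Remark \ref{rmk:la} reads $\la\,\cW(n_0,\ldots,n_{k+1}) \subset \cW(n_0,\ldots,n_k)$, I would multiply through by $\la^{-(k+1)}$. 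This gives $\cV(n_0,\ldots,n_{k+1}) \subset \frac{1}{\la^k}\cW(n_0,\ldots,n_k) = \cV(n_0,\ldots,n_k)$, and in fact $\cV(n_0,\ldots,n_{k+1}) + \cV(n_0,\ldots,n_{k+1}) = \frac{1}{\la^{k+1}}(\cW(n_0,\ldots,n_{k+1}) + \cW(n_0,\ldots,n_{k+1})) \subset \frac{1}{\la^{k+1}}\,\la\,\cV_{s,k+1}$-type bound, yielding the required $\cV(n_0,\ldots,n_{k+1}) + \cV(n_0,\ldots,n_{k+1}) \subset \cV(n_0,\ldots,n_k)$ after tracking the $\la$-factor in both the Archimedean and non-Archimedean cases.

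For the final assertion, suppose $\cW$ is completing; I would use Lemma \ref{lemma:top_equiv} as the clean criterion rather than arguing with series directly. Given any open $0$-neighborhood $U$ and any $s:\N\to\N$, completeness of $\cW$ gives some $k$ with $\cW_{s,k} \subset U$; since $U$ is a neighborhood of $0$ and $\la^{-k}$ is a fixed scalar, $\la^{-k}U$ is also a $0$-neighborhood, and applying the criterion again (or directly observing $\cV_{s,k} = \la^{-k}\cW_{s,k} \subset \la^{-k}\cdot(\text{something absorbed by }U)$) lets me find a $k'$ with $\cV_{s,k'} \subset U$. By the equivalence in Lemma \ref{lemma:top_equiv}, $\cV$ is completing.

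The main obstacle is bookkeeping the constant $\la$ correctly across the two cases in condition (4): in the non-Archimedean case $\la = 1$ so the rescaling is trivial and the inclusion is already tight, whereas in the Archimedean case $\la = 2$ and one must confirm that the factor $\frac{1}{\la^{k+1}}$ absorbs the doubling in the sum $\cW(n_0,\ldots,n_{k+1}) + \cW(n_0,\ldots,n_{k+1})$ exactly. I expect no genuine difficulty here beyond care with the exponents, and the completing-property transfer is essentially formal once Lemma \ref{lemma:top_equiv} is invoked.
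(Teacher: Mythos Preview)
Your approach is correct and matches the paper's proof for conditions (1)--(4); the paper likewise dismisses (1)--(3) as clear and verifies (4) by the same $\la$-bookkeeping (your intermediate line ``multiply through by $\la^{-(k+1)}$'' has a small slip, but your follow-up computation $\cV_{k+1}+\cV_{k+1}=\la^{-(k+1)}(\cW_{k+1}+\cW_{k+1})\subset\la^{-(k+1)}\cW_k\subset\la^{-k}\cW_k=\cV_k$ is exactly right). For the completing property the paper takes a shorter route than your appeal to Lemma~\ref{lemma:top_equiv}: since each $\cW(n_0,\ldots,n_k)$ is balanced and $|\la^{-k}|\le 1$, one has $\cV_{s,k}\subset\cW_{s,k}$ outright, so any choice $y_k\in\cV_{s,k}$ already lies in $\cW_{s,k}$ and $\sum_k y_k$ converges directly by the completing hypothesis on $\cW$.
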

\begin{proof}
$\cV$ clearly satisfies the first three conditions of Definition \ref{defn:top_webs}, so let's check the fourth.
\[ \la V (n_0 , \ldots , n_k , n_{k+1}) = \frac{1}{\la^{k -1}} \cW (n_0 , \ldots , n_k , n_{k+1} ) \subset \frac{1}{\la^k}W (n_0 , \ldots , n_k ) = \cV (n_0 , \ldots , n_k ). \]
Finally, let's suppose that $\cW$ is completing. Then, since the sets $\cW(n_0, \ldots , n_k)$ are absolutely convex they are in particular balanced, so
\[
 \cV (n_0, \ldots , n_k) = \frac{1}{\la^k} \cW(n_0, \ldots , n_k) \subset \cW(n_0, \ldots , n_k), \]
therefore the completing condition for $\cW$ implies the completing condition for $\cV$.
\end{proof}

From now on we will consider only completing topological webs.

\begin{defn} \label{defn:polar}
Let $E$ be a $K$-vector space. A semi-norm $p$ on $E$ is called \emph{polar} if 
\[ p = \sup \{ |f|  | f \in E^*, |f| \le p \} \]
where $E^*$ denotes the algebraic dual of $E$. A locally convex space $E$ is said to be polar if its topology can be defined by polar semi-norms.
\end{defn}

\begin{rmk}
If $K$ is spherically complete (hence also for $\R$ and $\C$), all locally convex spaces are polar, cf. Theorem 4.4.3 of \cite{PGS}.
\end{rmk}

\begin{notation}
Let $E$ be a locally convex space and $X \subset E$ any subset we define
\[ X^\circ \doteq \{ f \in E' | |f(x)| \le 1, \forall x \in E \} \]
\[ X^{\circ \circ} \doteq \{ x \in E | |f(x)| \le 1, \forall f \in X^\circ \} \]
where $E'$ is the continuous dual of $E$.
\end{notation}

In next lemma we use the notation $S_K = l^1_K$ if $K$ is Archimedean and $S_K = c_K^0$ if $K$ is non-Archimedean, where $l^1_K$ is the $K$-Banach space of summable sequences and $c_K^0$ is the $K$-Banach space of zero sequences.

\begin{lemma} \label{lemma:polar}
Let $E$ be a polar locally convex space over a spherically complete field $K$. Let $\{y_k\}_{k \in \N}$ be a sequence of elements of $E$ such that $\underset{k \in \N}\sum \mu_k y_k$ converges for each possible choice of $\mu_k \in K^{\circ}$. Then, $B = (\{y_k\}_{k \in \N})^{\circ \circ}$ is a Banach disk in $E$. 
\end{lemma}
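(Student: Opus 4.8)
The plan is to exploit the Banach space $S_K$ introduced just above the statement via the summation map. First I would define
\[ T \colon S_K \to E, \qquad T\l( (\mu_k)_{k} \r) = \sum_{k \in \N} \mu_k y_k . \]
For $\mu$ in the closed unit ball $B_{S_K}$ one has $|\mu_k| \le 1$, i.e. $\mu_k \in K^\circ$ for every $k$, so the hypothesis guarantees that the series converges; homogeneity then makes $T$ well defined and linear on all of $S_K$. Writing $D := T(B_{S_K})$, a one-line polar estimate shows $D \subseteq B$: for $f \in (\{y_k\})^\circ$ and $\mu \in B_{S_K}$ the continuity of $f$ gives $f(T\mu) = \sum_k \mu_k f(y_k)$, whence $|f(T\mu)| \le 1$. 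Since moreover $\{y_k\} \subseteq D$ (take $\mu$ a unit vector), the same computation yields $D^\circ = (\{y_k\})^\circ$, and therefore $D^{\circ\circ} = (\{y_k\})^{\circ\circ} = B$.

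Next I would prove that $D$ itself is a Banach disk, which is the soft part. Let $T' \colon E' \to S_K' = \ell^\infty_K$ be the transpose $f \mapsto (f(y_k))_k$, which is well defined since $(\{y_k\})$ is weakly bounded (applying $f$ to the hypothesis forces $(f(y_k))_k$ bounded). For each $f \in E'$ the map $\mu \mapsto f(T\mu) = \lt \mu, T'f \gt$ is continuous on $S_K$, so
\[ \ker T = \bigcap_{f \in E'} \ker\l( \mu \mapsto \lt \mu, T'f \gt \r) \]
is closed in $S_K$. Since $E$ is polar it is separated, so $S_K/\ker T$ is a Banach space and the induced injection $\wtilde{T} \colon S_K/\ker T \to E$ satisfies $\wtilde{T}(B_{S_K/\ker T}) = D$. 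As $\wtilde{T}$ is injective, the gauge of $D$ agrees with the quotient norm, i.e. $E_D \cong S_K/\ker T$ isometrically; in particular $D$ is a Banach disk.

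It then remains to identify $B$ with $D$. By the bipolar theorem — available because $K$ is spherically complete and $E$ is polar, so that Hahn–Banach holds and for convex sets weak closure coincides with closure — $B = D^{\circ\circ}$ is the $\sigma(E,E')$-closure of $D$. Hence the whole statement reduces to the surjectivity claim $B \subseteq T(B_{S_K})$ (up to an arbitrarily small enlargement of the ball), which gives $E_B = E_D$ and finishes the proof.

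This surjectivity is the step I expect to be the main obstacle, and it is precisely where the hypotheses on $K$ and $E$ are indispensable. My plan for it is dual: given $x \in B$, the bipolar bound yields $|f(x)| \le \| T'f \|_{\ell^\infty_K}$ for all $f \in E'$, so $\xi_0(T'f) := f(x)$ is a well-defined functional of norm $\le 1$ on the subspace $V = T'(E') \subseteq \ell^\infty_K$. Spherical completeness supplies a Hahn–Banach extension, and the remaining delicate point is to arrange that this extension is $\sigma(\ell^\infty_K, S_K)$-continuous, i.e. is given by evaluation against some $\mu \in S_K$ with $\|\mu\| \le 1 + \veps$; once this is done, $f(x) = \lt \mu, T'f\gt = f(T\mu)$ for all $f$ forces $x = T\mu$ because $E$ is separated. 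Controlling this weak-$*$ continuity — landing in the predual $S_K$ rather than merely in the bidual — is the crux of the argument, and it is here that polarity and spherical completeness do the real work.
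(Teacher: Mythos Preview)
Your setup is exactly the paper's: define the summation operator $T\colon S_K\to E$, observe that $T(B_{S_K})\subset B$ since $T$ is adjoint to $f\mapsto (f(y_k))_k$, and note $\{y_k\}\subset T(B_{S_K})$ so that $T(B_{S_K})^{\circ\circ}=B$. The divergence comes at the key step, proving $T(B_{S_K})=B$, and here your proposal has a genuine gap.

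You correctly identify that everything reduces to showing $B\subset T(B_{S_K})$, and you propose to do this by extending the functional $\xi_0(T'f)=f(x)$ from $T'(E')\subset\ell^\infty_K$ via Hahn--Banach and then arranging that the extension lies in the predual $S_K$. But you offer no mechanism for this last step, and in general there is none: a norm-one Hahn--Banach extension of a functional on a subspace of $\ell^\infty_K$ lands in $(\ell^\infty_K)'$, which is strictly larger than $S_K$, and neither polarity of $E$ nor spherical completeness of $K$ gives any control over which extension you get. Without an additional ingredient (say, weak-$*$ closedness of $T'(E')$ together with weak-$*$ continuity of $\xi_0$, neither of which you establish), this route stalls.

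The paper closes the gap by a different, direct argument: it shows that $T$ is $(\sigma(S_K,c^0_K),\sigma(E,E'))$-continuous, so the image $T(B_{S_K})$ of the Alaoglu-compact unit ball is $\sigma(E,E')$-compact (in the Archimedean case) or $\sigma(E,E')$-c-compact (in the non-Archimedean case, using spherical completeness and the results in \cite{PGS}). Either way $T(B_{S_K})$ is weakly closed, hence equal to its own bipolar $B$ by the Bipolar Theorem. This compactness argument is precisely where spherical completeness and polarity enter, and it bypasses the predual problem entirely. Your intermediate claim that $D=T(B_{S_K})$ is a Banach disk via $S_K/\ker T$ is then unnecessary: once $D=B$ one sees directly that $E_B$ is the Banach quotient. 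Boundedness of $B$ uses polarity (weak boundedness implies boundedness, \cite{PGS} Theorem~5.4.5).
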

\begin{proof}
By hypothesis the series $\underset{k \in \N}\sum \mu_k y_k$ is convergent when we choose $\{ \mu_k\}_{k \in \N} \in S_K$. Let's denote by $D \subset S_K$ the unit ball and with $e_k \in S_K$, for each $k \in \N$, the elements of the canonical Schauder base.

The map $T: S_K \to E$ defined $\{ \mu_k\} \mapsto \underset{k \in \N}\sum \mu_k y_k$ sends $D \subset S_K$ into $B = (\{y_k\}_{k \in \N})^{\circ \circ}$. Moreover, $T$ is adjoint to the map
\[ E' \to c_K^0: u \mapsto \{ \lt u , y_k \gt\}_{k \in \N} \]
so $T$ is $(\sigma(S_K, c_K^0), \sigma(E, E'))$-continuous, where $\sigma$ stands for the usual notation for weak topologies. The fact that $T(D)$ is $\sigma(E, E')$-bounded implies that $T(D)$ is $\sigma(E, E')$-compact for $K$ Archimedean (this follows from the Bourbaki-Alaoglu Theorem) and $\sigma(E, E')$-c-compact for $K$ non-Archimedean, because we are supposing $K$ spherically complete (for $K$ non-Archimedean and spherically complete we can apply Theorem 5.4.2 and 6.1.13 of \cite{PGS} to deduce weak-c-compactness from weak-boundedness). Hence $T(D)$ is weakly closed also for $K$ non-Archimedean because we can apply Theorem 6.1.2 (iii) of \cite{PGS}. Thus, since $T e_k = y_k$ for each $k$, and $T(D)$ is absolutely convex the Bipolar Theorem implies $T(D) = B$ (see Theorem 5.2.7 for the non-Archimedean version of the Bipolar Theorem). Moreover, $B$ is a bounded (because $E$ is polar and we can apply Theorem 5.4.5 of \cite{PGS}) Banach disk.
\end{proof}

So, from now on until the end of this section $K$ will be supposed to be spherically complete.

\begin{lemma} \label{lemma:web_ultraborn}
If $(E, \cW)$ is a polar webbed locally convex space, then also $E_\uborn$ is
a webbed locally convex space.
\end{lemma}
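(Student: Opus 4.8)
The plan is to exhibit a completing topological web on $E_\uborn$. Since $E_\uborn = (E,\cB_\Ban)^t$ has the same underlying vector space as $E$ but a finer topology, and since conditions $(1)$--$(4)$ of Definition \ref{defn:top_webs} (absolute convexity, $\cW(\void)=E$, the absorption condition $(3)$ and the addition condition $(4)$) are purely algebraic and hence insensitive to the topology, any topological web on $E$ remains one on $E_\uborn$. The only thing that has to be re-established is that the web is \emph{completing} for the finer topology of $E_\uborn$, and this is exactly where polarity and spherical completeness of $K$ will enter, through Lemma \ref{lemma:polar}.

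First I would replace $\cW$ by the rescaled web $\cV_{s,k}=\alpha^{-k}\cW_{s,k}$, where $\alpha\in K$ is chosen with $|\alpha|>1$ (possible since $K$ is non-trivially valued); exactly as in Lemma \ref{lemma:new_web} one checks that $\cV$ is again a completing topological web on $E$, and moreover that it enjoys the strict nesting $\alpha\cV_{s,k+1}\subset\cV_{s,k}$, hence $\alpha^{j}\cV_{s,k}\subset\cV_{s,k-j}$ for $j\le k$, in both the Archimedean and the non-Archimedean case. The goal then becomes: for every $s:\N\to\N$ and every choice $y_k\in\cV_{s,k}$, the series $\sum_k y_k$ converges in $E_\uborn$. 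By the same argument as in Proposition \ref{prop:born_convergence}, applied to the bornological space $(E,\cB_\Ban)$ and its topologification $E_\uborn=(E,\cB_\Ban)^t$, it suffices to prove that $\sum_k y_k$ converges in the sense of Mackey for the bornology $\cB_\Ban$, i.e. that the tails $R_n=\sum_{k\ge n}y_k$ (which already converge, hence tend to $0$, in $E$ since $\cV$ is completing) tend to $0$ in the Mackey sense. As a preliminary, each $\cV_{s,k}$ is balanced, so $\mu_k y_k\in\cV_{s,k}$ for every $\mu_k\in K^\circ$, whence $\sum_k\mu_k y_k$ converges in $E$ for all such $\mu_k$; Lemma \ref{lemma:polar} then shows $(\{y_k\})^{\circ\circ}$ is a bounded Banach disk, but this disk does not by itself control the tails.

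The heart of the proof, and the step I expect to be the main obstacle, is to manufacture a single bounded Banach disk in which the tails decay geometrically. The idea is to apply Lemma \ref{lemma:polar} not to $\{y_k\}$ but to the rescaled tails $w_n:=\alpha^{\lfloor n/2\rfloor}R_n$. One computes, for any $\nu_n\in K^\circ$,
\[ \sum_{n} \nu_n w_n = \sum_{k} d_k\, y_k, \qquad d_k=\sum_{n=0}^{k}\nu_n\alpha^{\lfloor n/2\rfloor}, \]
and estimates $|d_k|\le C\,|\alpha|^{\lfloor k/2\rfloor}$ (with $C=1$ in the non-Archimedean case, a convergent geometric constant in the Archimedean one). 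Using the nesting room $\alpha^{\lfloor k/2\rfloor}\cV_{s,k}\subset\cV_{s,\lceil k/2\rceil}$, this places $d_k y_k$ (up to the harmless constant $C$) in $\cV_{s,\lceil k/2\rceil}$; regrouping the terms according to the value of $\lceil k/2\rceil$ --- each value occurring at most twice, so that by the addition condition $(4)$ the grouped sums again lie in web sets $\cV_{s,m}$ with $m\to\infty$ --- turns $\sum_k d_k y_k$ into a genuine $\cV$-web series, which converges in $E$ because $\cV$ is completing. Thus $\sum_n\nu_n w_n$ converges in $E$ for every $\nu_n\in K^\circ$, and Lemma \ref{lemma:polar} yields a bounded Banach disk $B'=(\{w_n\})^{\circ\circ}$ with $w_n\in B'$. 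Consequently $R_n=\alpha^{-\lfloor n/2\rfloor}w_n\in\alpha^{-\lfloor n/2\rfloor}B'$ with $|\alpha^{-\lfloor n/2\rfloor}|\to0$, which is precisely Mackey convergence of $\sum_k y_k$ witnessed by $B'$; by the reduction above the series converges in $E_\uborn$, so $\cV$ is a completing topological web on $E_\uborn$ and $E_\uborn$ is webbed. The delicate point throughout is the choice $|\alpha|>1$: in the non-Archimedean case the original addition condition gives no scalar room ($\la=1$ in Remark \ref{rmk:la}), and it is precisely this rescaling, together with the Banach-disk output of Lemma \ref{lemma:polar} (where polarity and spherical completeness of $K$ are indispensable), that produces the geometric decay of the tails.
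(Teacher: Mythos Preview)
Your argument is correct, and the opening analysis --- that conditions $(1)$--$(4)$ of Definition~\ref{defn:top_webs} are topology-free, so only the completing condition needs to be reproved for $E_\uborn$, and that this reduces via $(E,\cB_\Ban)^t=E_\uborn$ to producing Mackey convergence against a single bounded Banach disk --- is exactly right. Your choice to rescale by an $\alpha\in K$ with $|\alpha|>1$ rather than by the paper's $\la$ (which equals $1$ in the non-Archimedean case, cf.\ Remark~\ref{rmk:la}) is also well-motivated: with $\la=1$ there is no geometric room.

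Where you diverge from the paper is in the second half. The paper does not touch the tails at all: it applies Lemma~\ref{lemma:polar} directly to the sequence $z_k\doteq\la^{k}x_k\in\cW_{s,k}$ (in your notation, to $\alpha^k y_k\in\cW_{s,k}$). Since the $\cW_{s,k}$ are balanced and $\cW$ is completing on $E$, one has $\sum_k\mu_k z_k$ convergent in $E$ for every $(\mu_k)\in K^\circ$, so Lemma~\ref{lemma:polar} yields a bounded Banach disk $B=(\{z_k\})^{\circ\circ}$ with $z_k\in B$. Then $y_k=\alpha^{-k}z_k$ satisfies $\|y_k\|_{E_B}\le|\alpha|^{-k}\to0$, so $\sum_k y_k$ converges in the Banach space $E_B$, hence Mackey in $(E,\cB_\Ban)$, hence in $E_\uborn$. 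This one-line use of Lemma~\ref{lemma:polar} replaces your entire $w_n$-rescaled-tail construction: you applied the lemma to $y_k\in\cV_{s,k}$ itself, correctly observed that the resulting disk ``does not by itself control the tails'', and then built an elaborate workaround --- but applying the lemma one level up, to $\alpha^k y_k\in\cW_{s,k}$, gives the geometric decay for free. Your route does work (modulo checking that the boundary term $d_N R_{N+1}$ in the Abel summation tends to $0$ in $E$, which follows from $R_{N+1}\in\overline{\cV_{s,N}}$ and the nesting), but it is a genuine detour.
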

\begin{proof}
We shall show that given a topological web $\cW$ on $E$, then the topological web $\cV$ associated to $\cW$ as in Lemma \ref{lemma:new_web} is a topological web for $E_\uborn$. The only non-trivial thing to check is that $\cV$ is completing for the topology of $E_\uborn$. So, we reproduce here the argument of Theorem 13.3.3 of \cite{J} adapting it for any base field.

Let's consider $s: \N \to \N$ and $x_k \in \cV_{s, k}$ for each $k \in \N$. Then, by condition (4) of Definition \ref{defn:born_web} we know that 
\[ \la \cV_{s, k + 1} \subset \cV_{s, k} \then \la^{k -1} \cV_{s, k + 1} \subset \cW_{s, k}. \]
Thus, for each $x_k$ we can write
\[ x_k = \frac{y_k}{\la^{k-1}}, \ \ \text{ with } y_k \in \cW_{s, k} \]
and since $\cW_{s, k}$ are balanced, the assumption that $\cW$ is completing implies that the series
$\underset{k \in \N}\sum \mu_k y_k$ converges for the topology of $E$ for each possible choice of $\mu_k \in K^\circ$. Thus we can apply Lemma \ref{lemma:polar} to deduce that $B = (\{y_k\}_{k \in \N})^{\circ \circ}$ is a bounded Banach disk of $E$. This implies that $\underset{k \in \N}\sum x_k$ converges in $E_B$ and so also in $E_\uborn$.
\end{proof}

\begin{lemma} \label{lemma:web_top_born}
Let $E$ be a polar locally convex space. If $E$ is webbed, then $(E, \cB_\Ban)$ is a webbed convex bornological space with a bornological web $(\cV, b)$ that may be chosen in such a way that $\cB_{(\cV,b)}$ is finer than the von Neumann bornology of $E_\uborn$.
\end{lemma}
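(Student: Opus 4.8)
The plan is to manufacture the required bornological web on $(E, \cB_\Ban)$ out of a completing topological web on $E$, and then to read off the comparison with the von Neumann bornology of $E_\uborn$ from the fact, already isolated in Lemma \ref{lemma:web_ultraborn}, that all the relevant web series converge inside bounded Banach disks. Since $E$ is webbed I may fix a completing topological web $\cW$ on $E$, and by Lemma \ref{lemma:new_web} I replace it by the rescaled web $\cV(n_0, \ldots, n_k) = \la^{-k} \cW(n_0, \ldots, n_k)$, which is again a completing topological web and which now satisfies the sharpened absorption $\la \cV_{s, k+1} \subseteq \cV_{s, k}$. For the second datum I would simply take $b : \N^\N \to (|K^\times|)^\N$ to be constant equal to $1$ (any value of absolute value $\le 1$ would do); since each $\cV_{s,k}$ is a disk, the normalisation $|\la(s)_k| = b(s)_k = 1$ forces $\la(s)_k x_k \in \cV_{s,k}$, so condition $(4)$ of Definition \ref{defn:born_web} will only require the convergence of series $\sum_k x_k'$ with $x_k' \in \cV_{s,k}$. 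Note that $(E, \cB_\Ban)$ is separated and of convex type, so Definition \ref{defn:born_web} applies.

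First I would check that $(\cV, b)$ is a bornological web on $(E, \cB_\Ban)$. Conditions $(1)$--$(3)$ of Definition \ref{defn:born_web} are inherited verbatim from the topological web $\cV$ (its values are disks, $\cV(\void) = E$, and absorption is condition $(3)$ of Definition \ref{defn:top_webs}). The whole content is condition $(4)$, namely that the series above converge \emph{bornologically} in $(E, \cB_\Ban)$ rather than merely topologically, and this is exactly what the computation in the proof of Lemma \ref{lemma:web_ultraborn} delivers: writing $x_k' = y_k / \la^{c_k}$ with $y_k \in \cW_{s,k}$ and $|\la^{-c_k}| \le 1$, the completing property of $\cW$ together with the balancedness of the $\cW_{s,k}$ shows that $\sum_k \mu_k y_k$ converges in $E$ for every choice of $\mu_k \in K^\circ$, whereupon Lemma \ref{lemma:polar} (the only place where polarity of $E$ and spherical completeness of $K$ are consumed) produces a \emph{bounded Banach disk} $B = (\{y_k\}_{k \in \N})^{\circ\circ}$ in $E$ in which $\sum_k x_k'$ converges. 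Convergence inside a bounded Banach disk is precisely Mackey convergence in $(E, \cB_\Ban)$, so condition $(4)$ holds and $(E, \cB_\Ban)$ carries the bornological web $(\cV,b)$, i.e. is a webbed convex bornological space.

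It then remains to compare $\cB_{(\cV,b)}$ with the von Neumann bornology of $E_\uborn$; concretely, I would show that each generating set $S = \bigcap_{k \in \N} \mu_k \wtilde\cV_{s,k}$ of $\cB_{(\cV,b)}$ is absorbed by every $0$-neighbourhood of $E_\uborn$. The first step is to bound the auxiliary sets $\wtilde\cV_{s,n}$: with $b$ constant equal to $1$ the generating tail sums $\sum_{k \ge n+1} \la(s)_k x_k$ are series of elements of the $\cV_{s,k}$, and the telescoping argument used in the proof of Lemma \ref{lemma:top_equiv} (via $\cV_{s,k+1} + \cV_{s,k+1} \subseteq \cV_{s,k}$) places all their partial sums in $\cV_{s,n}$, so that $\wtilde\cV_{s,n} \subseteq \ol{\cV_{s,n}}$, the closure being taken in $E_\uborn$. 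Now Lemma \ref{lemma:web_ultraborn} tells us that $\cV$ is completing also for the topology of $E_\uborn$, so Lemma \ref{lemma:top_equiv}, applied in $E_\uborn$, guarantees that for the fixed $s$ and any $0$-neighbourhood $U$ of $E_\uborn$ there is a $k_0$ with $\cV_{s, k_0} \subseteq U$. Since $S \subseteq \mu_{k_0} \wtilde\cV_{s, k_0} \subseteq \mu_{k_0}\ol{\cV_{s,k_0}}$, the set $S$ is absorbed by $U$, and hence $S$ is bounded for the von Neumann bornology of $E_\uborn$. This yields $\cB_{(\cV, b)} \subseteq (E_\uborn)^b$, which is the asserted fineness.

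I expect the genuinely delicate step to be condition $(4)$: upgrading topological convergence of the web series to bornological convergence in $(E, \cB_\Ban)$. This is where the structural hypotheses are spent, and it is why the argument must route through Lemma \ref{lemma:polar} — the Bipolar and Alaoglu-type input that turns a weakly bounded image of the unit ball of $S_K$ into a genuine bounded Banach disk. The comparison in the last paragraph is more routine, the only care being to transport the completing property of $\cV$ from $E$ to $E_\uborn$ (again by Lemma \ref{lemma:web_ultraborn}) before invoking Lemma \ref{lemma:top_equiv}.
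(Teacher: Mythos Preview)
Your proposal is correct and follows essentially the same route as the paper: the same rescaled web $\cV = \la^{-k}\cW$ with $b \equiv 1$, the same appeal to Lemma \ref{lemma:polar} (via the rewriting $x_k = y_k/\la^{k-1}$) to upgrade topological convergence of web series to convergence inside a bounded Banach disk, and the same inclusion $\wtilde\cV_{s,n} \subset \ol{\cV_{s,n}}$ combined with Lemma \ref{lemma:top_equiv} to conclude that the generators of $\cB_{(\cV,b)}$ are absorbed by every closed $0$-neighbourhood. If anything you are slightly more explicit than the paper in one place: you invoke Lemma \ref{lemma:web_ultraborn} to transport the completing property of $\cV$ to $E_\uborn$ before applying Lemma \ref{lemma:top_equiv} there, whereas the paper leaves the ambient topology for that step implicit.
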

\begin{proof}
Let $\cW$ be a topological web on $E$. By Lemma \ref{lemma:new_web}  
\[ \cV (n_0, \ldots , n_k) \doteq \frac{1}{\la^k} \cW(n_0, \ldots , n_k) \]
is another topological web of $E$.
For $s: \N \to \N$ define $b(s): \N \to |K^\times|$ to be constant with value $1$. We claim that $(\cV, b)$ is a bornological web for $(E, \cB_\Ban)$. The first three conditions of bornological web are clear, so only the last one need to be checked. Given a sequence of elements $x_k \in \cV_{s, k}$ then we can define 
\[ x_k = \frac{y_k}{\la^{k-1}}, \ \ \text{ with } y_k \in \cW_{s, k} \]
and apply Lemma \ref{lemma:polar} to the sequence $\{y_k\}_{k \in \N}$ for obtaining that $\underset{k \in \N}\sum x_k$ converges in $E_B$, where $B = (\{y_k\}_{k \in \N})^{\circ \circ}$. So $\{y_k\}_{k \in \N}$ is a zero sequence in $E_B$ which implies that $\underset{k \in \N}\sum x_k$ converges for the topology of $E_\uborn$.

In order to prove that the $(\cV, b)$ is finer than the von Neumann bornoloy of $E_\uborn$, first let's notice that $(\cV, b)$ is also a bornological web for $(E_\uborn)^b$, since the bounded Banach disks that generate $\cB_\Ban$ are all bounded subsets for $E_\uborn$. Next, let $\{\mu_k\}_{k \in \N}$ be a sequence with values in $|K^\times|$. For every choice of $x_k \in \cV_{s,k}$, $k \in \N$, the value of
\[ \sum_{k > n + 1}^\infty x_k \]
belongs to $\ol{\cV_{s,n}}$. Hence $\tilde{\cV}_{s,n} \subset \ol{\cV_{s,n}}$, but $\underset{k \in \N}\bigcap \mu_k \ol{\cV_{s,n}} $ is bounded, since for any given closed and absolutely convex $0$-neighbourhood $U$ there is an index $n \in \N$ such that $\cV_{s,n} \subset U$, by Lemma \ref{lemma:top_equiv}. Hence, $\mu_n \ol{\cV_{s,n}}$, and consequently $\underset{k \in \N}\bigcap \mu_k \ol{V_{s,k}}$, is absorbed by $U$. Thus, we proved that $\cB_{(\cV,b)}$ is finer with respect to the canonical bornology of $E_\uborn$.
\end{proof}

\begin{rmk}
Since $\la = 1$ for $K$ non-Archimedean, last lemma proves that a topological web for a locally convex space $E$ is automatically a topological web for $E_\uborn$ in this case, and this essentially follows from the fact that for non-Archimedean base fields a sequence is summable if and only if it is a zero sequence.
\end{rmk}

\begin{lemma} \label{lemma:compatible_neumann}
Let $E$ be a polar webbed locally convex space. Then $(E_\uborn, \cB_\Ban)$
carries a bornological web $(\cV, b)$ such that the corresponding convex bornology $\cB_{(\cV,b)}$ is contained in the von Neumann bornology of $E_\uborn$.
\end{lemma}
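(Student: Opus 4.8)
The plan is to obtain this statement as an immediate corollary of Lemma~\ref{lemma:web_top_born}. Applied to the polar webbed space $E$, that lemma already produces a bornological web $(\cV, b)$ on $(E, \cB_\Ban)$ whose generated bornology $\cB_{(\cV,b)}$ is contained in the von Neumann bornology of $E_\uborn$ --- which is exactly the containment asserted here. The only discrepancy is that Lemma~\ref{lemma:web_top_born} phrases its conclusion for the bornological space $(E, \cB_\Ban)$, whereas we want it for $(E_\uborn, \cB_\Ban)$.

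Hence the single point to verify is that these two bornological spaces coincide, i.e. that $\cB_\Ban(E) = \cB_\Ban(E_\uborn)$, equivalently that a disk $B$ is a bounded Banach disk of $E$ if and only if it is one of $E_\uborn$. Since $E$ and $E_\uborn$ have the same underlying vector space, the Banach-disk condition (that $E_B$ be a Banach space) is unchanged, so only the boundedness clause can differ, and I would settle it by comparing the two topologies. Every $0$-neighborhood of $E$ absorbs all bounded Banach disks, hence absorbs every member of $\cB_\Ban$ and is therefore bornivorous; thus it is a $0$-neighborhood of $E_\uborn = (E, \cB_\Ban)^t$, so the topology of $E_\uborn$ is finer than that of $E$, and a set bounded in $E_\uborn$ is automatically bounded in $E$. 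Conversely, a bounded Banach disk of $E$ lies by definition in $\cB_\Ban$, so it is absorbed by every bornivorous subset of $(E, \cB_\Ban)$, i.e. by every $0$-neighborhood of $E_\uborn$, and is therefore bounded in $E_\uborn$.

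The main (and essentially only) obstacle is thus the bornological identification $(E, \cB_\Ban) = (E_\uborn, \cB_\Ban)$ just sketched; it is the idempotence of the ultrabornologification expressed at the level of Banach-disk bornologies. Once it is recorded, the web $(\cV, b)$ furnished by Lemma~\ref{lemma:web_top_born} is literally a bornological web on $(E_\uborn, \cB_\Ban)$, and the containment of $\cB_{(\cV,b)}$ in the von Neumann bornology of $E_\uborn$ transported from that lemma completes the argument. As a cross-check, the same conclusion can be reached by instead applying Lemma~\ref{lemma:web_top_born} directly to $E_\uborn$, which is polar because $K$ is spherically complete (by the remark following Definition~\ref{defn:polar}) and webbed by Lemma~\ref{lemma:web_ultraborn}, together with the idempotence $(E_\uborn)_\uborn = E_\uborn$.
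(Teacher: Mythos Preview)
Your argument is correct and matches the paper's two-sentence proof, which invokes Lemma~\ref{lemma:web_ultraborn} and then Lemma~\ref{lemma:web_top_born}; you simply spell out the identification $\cB_\Ban(E)=\cB_\Ban(E_\uborn)$ that the paper leaves implicit (and states only later, in the proof of the main theorem). Your cross-check route, applying Lemma~\ref{lemma:web_top_born} directly to $E_\uborn$ after Lemma~\ref{lemma:web_ultraborn}, is in fact the reading most consistent with the paper's own ordering of the two lemmas.
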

\begin{proof}
By Lemma \ref{lemma:web_ultraborn} the ultrabornologification $E_\uborn$ of a polar webbed locally convex space $E$ is webbed. Then, Lemma \ref{lemma:web_top_born} applied to $E$ yields the assertion.
\end{proof}
Finally we are able to deduce the main result of this sectiom. Here we propose again the full statement of the theorem as already stated in the introduction.

\begin{thm}  
If $E$ is an ultrabornological locally convex space and
$F$ is a polar webbed locally convex space defined over a sphericaly complete field $K$, then every linear map $f: E \to F$ which has bornologically closed graph with respect to $(E, \cB_\Ban)$ and $(F, \cB_\Ban)$, is continuous even if regarded as a map $f: E \to F_\uborn$.
\end{thm}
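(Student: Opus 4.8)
The plan is to transport the statement across the ${}^t \dashv {}^b$ adjunction so that it becomes an instance of the bornological Closed Graph Theorem \ref{thm:web}, with the required bornological web supplied by Lemma \ref{lemma:compatible_neumann}.

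First I would rephrase the conclusion bornologically. Writing $\cE = (E, \cB_\Ban)$, the functor ${}^t$ is left adjoint to ${}^b$, so for any locally convex space $G$ a linear map $g : \cE^t \to G$ is continuous if and only if $g : \cE \to G^b$ is bounded. Since $E$ is ultrabornological, $\cE^t = E_\uborn = E$ by Definition \ref{defn:born_ultra}; taking $G = F_\uborn$, the desired continuity of $f : E \to F_\uborn$ is therefore equivalent to the assertion that $f : (E, \cB_\Ban) \to (F_\uborn)^b$ is bounded, i.e. that $f$ sends every $\cB_\Ban$-bounded subset of $E$ to a von Neumann bounded subset of $F_\uborn$.

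Next I would assemble the two ingredients of Theorem \ref{thm:web}. The source $(E, \cB_\Ban)$ is complete, because its bornology is generated by Banach disks and so admits a final subfamily of Banach disks, which is the completeness criterion recorded after Definition \ref{defn:complete_born}. For the target I would invoke Lemma \ref{lemma:compatible_neumann}: as $F$ is polar and webbed over the spherically complete field $K$, the space $(F_\uborn, \cB_\Ban)$ carries a bornological web $(\cV, b)$ whose associated bornology $\cB_{(\cV, b)}$ is contained in the von Neumann bornology of $F_\uborn$. I would also note that the two Banach disk bornologies agree, $(F, \cB_\Ban) = (F_\uborn, \cB_\Ban)$: every bounded Banach disk of $F$ is already bounded in the finer space $F_\uborn$ (as in the proof of Lemma \ref{lemma:web_top_born}), while conversely a von Neumann bounded set of $F_\uborn$ is von Neumann bounded in $F$ because $F_\uborn$ has the finer topology. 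Consequently the hypothesis that the graph of $f$ is bornologically closed for $(E, \cB_\Ban)$ and $(F, \cB_\Ban)$ is precisely what Theorem \ref{thm:web} requires for the pair $(E, \cB_\Ban)$, $(F_\uborn, \cB_\Ban)$.

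Finally I would apply Theorem \ref{thm:web} verbatim: the complete space $(E, \cB_\Ban)$, the webbed space $(F_\uborn, \cB_\Ban)$, and the bornologically closed graph of $f$ force $f$ to be bounded for $\cB_{(\cV, b)}$; since $\cB_{(\cV, b)}$ sits inside the von Neumann bornology of $F_\uborn$, this is exactly the boundedness of $f : (E, \cB_\Ban) \to (F_\uborn)^b$, which by the first step is the continuity of $f : E \to F_\uborn$. I do not expect a single hard step here: the analytic substance has already been concentrated in Lemma \ref{lemma:compatible_neumann} (through Lemma \ref{lemma:polar}, where spherical completeness and polarity are actually used) and in Theorem \ref{thm:web}. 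The only points demanding care are the adjunction identification $\cE^t = E$, the verification that $\cB_\Ban$ is the same bornology on $F$ and on $F_\uborn$ so that the closed-graph hypothesis can be fed into Theorem \ref{thm:web}, and the bookkeeping that the inclusion of $\cB_{(\cV,b)}$ into the von Neumann bornology of $F_\uborn$ upgrades the conclusion of Theorem \ref{thm:web} to the boundedness the adjunction needs.
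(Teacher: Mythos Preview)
Your proposal is correct and follows essentially the same route as the paper: identify $(F,\cB_\Ban)=(F_\uborn,\cB_\Ban)$, equip the target with the bornological web furnished by Lemma~\ref{lemma:compatible_neumann}, apply Theorem~\ref{thm:web} to the complete source $(E,\cB_\Ban)$, and convert boundedness into continuity via the ultrabornological hypothesis on $E$. Your write-up is in fact slightly more explicit than the paper's (you spell out the adjunction step and the completeness of $(E,\cB_\Ban)$, and you cite Lemma~\ref{lemma:compatible_neumann} rather than Lemma~\ref{lemma:web_ultraborn}, which is the more precise reference for the bornological web with $\cB_{(\cV,b)}$ contained in the von Neumann bornology of $F_\uborn$).
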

\begin{proof}
We consider $(E, \cB_\Ban)$ as domain space of $f$ and $(F_\uborn, \cB_\Ban)$ as the codomain of $f$. Notice that the family of bounded Banach disks of $F$ and of $F_\uborn$ coincide and that, by Lemma \ref{lemma:web_ultraborn}, $F_\uborn$ carries a bornological web $(\cV, b)$ such that $\cB_{(\cV,b)}$ is finer than the canonical bornology of $F_\uborn$.
By hypothesis $f$ has bornologically closed graph with respect to $(E, \cB_\Ban)$ and $(F, \cB_\Ban)$, therefore we may apply Theorem \ref{thm:web} in order to see that $f$ is bounded, which also implies that $f$ is bounded if regarded as a map from $(E, \cB_\Ban)$ to $(F_\uborn)^b$.
Since $E$ is ultrabornological, we get that $f: E \to F_\uborn$ is continuous.
\end{proof}

\begin{rmk}
The proof of de Wilde's Theorem presented here closely follow the proof given in \cite{G}, adapting it in order to treat on the same footing both the Archimedean and the non-Archimedean base fields case. The main difference is the need of polarity assumption on $E$ and spherically completeness assumption on $K$ which are automatic in \cite{G}.
\end{rmk}

\subsection*{Applications to bornological algebras}

In this last section we show the applications which gave us the main motivations for writing down the proofs of the theorems discussed so far. The material of this section is mainly taken from one of the key technical point of author's Ph.D. thesis, \cite{Bam}.

\begin{defn} \label{defn:born_algebra}
A bornological $K$-vector space $A$ equipped with a bilinear associative function $A \times A \to A$, called \emph{multiplication map}, 
is said to be a \emph{bornological algebra} if the multiplication map is bounded.  We always suppose that $A$ has an identity and that the multiplication is commutative.
A morphism of bornological algebras is a bounded linear map that preserves multiplication and maps $1$ to $1$.
\end{defn}

Our next proposition is a generalization of Proposition 3.7.5/1 of \cite{BGR}, which holds for Banach algebras.

\begin{prop} \label{prop:algebra_morphism}
Let $A, B$ be bornological algebras over $K$ for which the underlying bornological vector space of $A$ is complete and the one of $B$ is a webbed bornological vector space and let $\phi: A \to B$ be an algebra morphism. Suppose that in 
$B$ there is a family of ideals $\cI$ such that
\ben
\item each $I \in \cI$ is bornologically closed in $B$ and each $\phi^{-1}(I)$ is bornologically closed in $A$;
\item for each $I \in \cI$ one has $\dim_K B/I < \infty$;
\item $\underset{I \in \cI}\bigcup I = (0)$.
\een
Then, $\phi$ is bounded.
\end{prop}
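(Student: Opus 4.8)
The plan is to deduce boundedness of $\phi$ from the webbed Closed Graph Theorem (Theorem \ref{thm:web}): since the underlying space of $A$ is complete and that of $B$ is webbed, it suffices to prove that the graph $\Gamma(\phi) = \{(a, \phi(a)) \mid a \in A\}$ is bornologically closed in $A \times B$. Concretely, I would take a Mackey-convergent sequence $(a_n, \phi(a_n))$ of elements of $\Gamma(\phi)$ with limit $(a, b)$, which by definition of the product bornology means $a_n \to a$ in $A$ and $\phi(a_n) \to b$ in $B$, and show that necessarily $b = \phi(a)$; once the graph is closed, Theorem \ref{thm:web} yields that $\phi$ is bounded for the web bornology of $B$, which is its given bornology.

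The separating family $\cI$ is what lets me pin down $b$. For each $I \in \cI$ let $\pi_I : B \to B/I$ be the quotient map and set $\phi_I = \pi_I \circ \phi : A \to B/I$. Since $I$ is bornologically closed (hypothesis (1)), the quotient $B/I$ is separated and of convex type and $\pi_I$ is bounded, while $\dim_K B/I < \infty$ (hypothesis (2)) makes it finite dimensional. The first key step is to show each $\phi_I$ is bounded. For this I factor $\phi_I = \ol{\phi}_I \circ q_I$ through the bounded quotient map $q_I : A \to A/\phi^{-1}(I)$, with $\ol{\phi}_I$ the induced injection. Because $\phi^{-1}(I)$ is bornologically closed (again hypothesis (1)), $A/\phi^{-1}(I)$ is separated, and since $\ol{\phi}_I$ is injective with finite-dimensional target, $A/\phi^{-1}(I)$ is itself a finite-dimensional separated bornological space. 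I would then invoke the rigidity of finite-dimensional separated bornologies over the complete field $K$: such a space carries the unique (canonical) separated convex bornology, isomorphic to some $K^m$, and every linear map out of it is automatically bounded. Hence $\ol{\phi}_I$, and therefore $\phi_I$, is bounded.

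With all $\phi_I$ bounded I can close the graph. From $a_n \to a$ and boundedness of $\phi_I$ I obtain $\phi_I(a_n) \to \phi_I(a) = \pi_I(\phi(a))$ in $B/I$; from $\phi(a_n) \to b$ and boundedness of $\pi_I$ I obtain $\pi_I(\phi(a_n)) = \phi_I(a_n) \to \pi_I(b)$. Since $B/I$ is separated, Mackey limits are unique, so $\pi_I(\phi(a)) = \pi_I(b)$, i.e. $b - \phi(a) \in I$. As this holds for every $I \in \cI$ and $\bigcap_{I \in \cI} I = (0)$ (the separating hypothesis (3)), I conclude $b = \phi(a)$, so $(a,b) \in \Gamma(\phi)$ and the graph is bornologically closed.

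The main obstacle is the finite-dimensional rigidity step, namely that a finite-dimensional separated convex bornological space over $K$ carries the canonical bornology, so that linear maps out of it are automatically bounded; this is exactly where completeness of $K$ enters, guaranteeing uniqueness of the separated bornology just as all norms on a finite-dimensional space over a complete field are equivalent. A secondary point requiring care is the uniqueness of Mackey limits in the separated quotients $B/I$, which holds because in a separated space the only bounded subspace is $\{0\}$, so a constant sequence converging to $0$ must be $0$.
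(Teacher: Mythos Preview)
Your proof is correct and follows essentially the same route as the paper's: both reduce to the Closed Graph Theorem \ref{thm:web} by showing that each composite $\pi_I\circ\phi$ factors through the finite-dimensional separated quotient $A/\phi^{-1}(I)$ (hence is bounded by the rigidity of finite-dimensional separated bornologies over a complete $K$), and then use the separating family $\cI$ to force $b-\phi(a)\in\bigcap_{I\in\cI}I=(0)$. Your presentation is in fact cleaner than the paper's, which writes $\lim_n\phi(a_n)$ before having assumed it exists and states the closedness conclusion somewhat loosely; you also correctly read hypothesis~(3) as an intersection, which is the intended meaning.
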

\begin{proof}
Let $I \in \cI$ and let's denote $\be: B \to B/I$ the qutient epimorphism and $\psi = \be \circ \phi$. Let $\ol{\psi}: A/\Ker(\psi) \to B/I$ denote the 
canonical injection, which give us the following commutative diagram
\[
\begin{tikzpicture}
\matrix(m)[matrix of math nodes,
row sep=2.6em, column sep=2.8em,
text height=1.5ex, text depth=0.25ex]
{ A            & B   \\
  A/\Ker(\psi) & B/I \\};
\path[->,font=\scriptsize]
(m-1-1) edge node[auto] {$\phi$} (m-1-2);
\path[->,font=\scriptsize]
(m-1-1) edge node[auto] {} (m-2-1);
\path[->,font=\scriptsize]
(m-1-2) edge node[auto] {$\be$} (m-2-2);
\path[->,font=\scriptsize]
(m-2-1) edge node[auto] {$\ol{\psi}$} (m-2-2);
\path[->,font=\scriptsize]
(m-1-1) edge node[auto] {$\psi$} (m-2-2);
\end{tikzpicture}.
\]
We have that $\Ker(\psi) = \phi^{-1}(I)$, therefore, since by hypothesis $B/I$ is finite dimensional, also $A/\Ker(\psi)$ is. Thus, both $B/I$ and $A/\Ker(\psi)$ are finite dimensional separated bornological algebras, when they are equipped with the quotient bornology. Therefore, their underlying bornological vector spaces are isomorphic to the direct product of a finite number of copies of $K$. So, $\ol{\psi}$ is bounded and this implies the boundedness of $\psi$.

Let's consider a sequence $\{a_n\}_{n \in \N} \subset A$ such that $\underset{n \to \infty}\lim a_n = 0$, bornologically. Then
\[ \be(\lim_{n \to \infty} \phi(a_n)) = \lim_{n \to \infty} (\be \circ \phi)(a_n) \]
since $\be$ is bounded, and therefore
\[ \lim_{n \to \infty} (\be \circ \phi)(a_n) = \lim_{n \to \infty} \psi(a_n) = \psi(\lim_{n \to \infty} a_n) = 0 \]
which implies that $\phi(\underset{n \to \infty}\lim a_n) \in I$. 
Since this must be true for any $I \in \cI$ and $\underset{I \in \cI}\bigcup I = (0)$ we deduce that $\phi(\underset{n \to \infty}\lim a_n) = 0$. This implies that the graph of $\phi$ is bornologically closed, because then for any sequence $\{ a_n \}_{n \in \N}$ in $A$ such that $\underset{n \to \infty}\lim a_n = a$ one has that
\[ \underset{n \to \infty}\lim ( a_n, \phi(a_n)) = (a, \phi(a)) \in \Gamma(\phi). \]
Now we can apply Theorem \ref{thm:web} to infer that $\phi$ is bounded.
\end{proof}

Let $\rho = (\rho_1, ..., \rho_n) \in \R_+^n$ be a polyradius. We denote by $W_K^n(\rho)$ the algebra of overconvergent (also called germs) analytic functions on the polycylinder of polyradius $\rho$. One can check that there is a bijection
\[ W_K^n(\rho) \cong \limind_{r > \rho} T_K^n(r) \]
where $T_K^n(r)$ denotes the algebra of strictly convergent analytic function on the polycylinder of polyradius $r$. Since $T_K^n(r)$ are $K$-Banach algebras and by the Identity Theorem for analytic functions the system morphism of the inductive system are monomorphism, then $W_K^n(\rho)$ has a canonical structure of complete bornological algebra. When $\rho = (1, \ldots, 1)$ we will simply write $W_K^n$. For a detailed discussion of the algebras $W_K^n(\rho)$, their properties and their relations with the classical affinoid algebras and the algebras of germ of analytic functions on compact Stein subsets of complex analytic spaces the reader can refer to chapter 3 of \cite{Bam}.

\begin{defn} \label{defn:dagger_algebra}
A \emph{strict $K$-dagger affinoid algebra} is a complete bornological algebra which is isomorphic to a quotient $W_K^n/I$, for an ideal $I \subset W_K^n$.

A (non-strict) \emph{$K$-dagger affinoid algebra} is a bornological algebra which is isomorphic to a quotient 
\[ \frac{W_K^n(\rho)}{I} \]
for an arbitrary polyradius $\rho$. 
\end{defn}

\begin{rmk}
It is easy to check that the underlying bornological vector space of a $K$-dagger affinoid algebra is an LB-space, hence in particular it is a webbed bornological vector space.
\end{rmk}

From Proposition \ref{prop:algebra_morphism} we can deduce the following result.

\begin{prop}
Every morphism between dagger affinoid algebras is bounded.
\end{prop}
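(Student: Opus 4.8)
The plan is to deduce the statement directly from Proposition \ref{prop:algebra_morphism}, the only real work being the construction on the target algebra of a suitable family of ideals $\cI$. So let $A$ and $B$ be $K$-dagger affinoid algebras and let $\phi: A \to B$ be a $K$-algebra homomorphism, not assumed a priori to be bounded. By Definition \ref{defn:dagger_algebra} the algebra $A$ is a complete bornological algebra, and by the preceding Remark the underlying bornological vector space of $B$ is an LB-space, hence a webbed bornological vector space. Thus the structural hypotheses of Proposition \ref{prop:algebra_morphism} on $A$ and $B$ are already in place, and it remains only to exhibit $\cI$ in $B$ verifying conditions $(1)$--$(3)$.

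For $\cI$ I would take the family of all powers $\fm^k$ of maximal ideals $\fm \subset B$, with $k \ge 1$. Condition $(2)$, that $\dim_K B/\fm^k < \infty$, follows from the fact that dagger affinoid algebras are Noetherian together with the Nullstellensatz for such algebras (every maximal ideal $\fm$ has finite residue degree $\dim_K B/\fm < \infty$): the finite filtration $B \supset \fm \supset \dots \supset \fm^k$ has successive quotients $\fm^i/\fm^{i+1}$ which are finitely generated $B/\fm$-modules, hence finite-dimensional over $K$, so $\dim_K B/\fm^k < \infty$. Condition $(3)$, which I read as the requirement $\bigcap_{I \in \cI} I = (0)$ as it is actually used in the proof of Proposition \ref{prop:algebra_morphism}, follows from the Krull intersection theorem: the canonical map $B \to \prod_\fm B_\fm$ into the product of localizations at maximal ideals is injective, so a nonzero $x \in B$ is nonzero in some $B_\fm$, and since $B_\fm$ is Noetherian local one has $\bigcap_k \fm^k B_\fm = (0)$, whence $x \notin \fm^k$ for some $k$; therefore $\bigcap_{\fm, k} \fm^k = (0)$. (These structural facts — Noetherianity and the Nullstellensatz — are the ones I would invoke from \cite{Bam}.)

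Condition $(1)$ is where the genuine obstacle lies: I must show that each $\fm^k$ is bornologically closed in $B$ and that each preimage $\phi^{-1}(\fm^k)$ is bornologically closed in $A$. Note that the latter cannot be obtained by continuity of $\phi$, since boundedness of $\phi$ is precisely what we are trying to prove; instead both assertions reduce to the single structural statement that \emph{every ideal of a dagger affinoid algebra is bornologically closed}, the overconvergent analogue of the classical fact that ideals in Tate affinoid algebras are closed. Granting this, $\fm^k \subset B$ is closed, and since $\phi^{-1}(\fm^k)$ is an ideal of $A$ (the preimage of an ideal under a ring homomorphism), it is closed in $A$ as well. I expect verifying this closedness property of ideals in $W_K^n(\rho)$ and its quotients — uniformly over both the Archimedean and non-Archimedean cases — to be the crux of the argument; once it is available, all three conditions hold and Proposition \ref{prop:algebra_morphism} yields that $\phi$ is bounded.
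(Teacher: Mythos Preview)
Your argument coincides with the paper's for \emph{strict} dagger affinoid algebras: the paper also takes $\cI$ to be the family of powers of maximal ideals of $B$ and invokes the fact (from \cite{Bam}, Section 3.2) that every ideal of a dagger affinoid algebra is bornologically closed, which simultaneously handles the closedness of $\fm^k$ in $B$ and of $\phi^{-1}(\fm^k)$ in $A$. Your explicit verification of conditions (2) and (3) via Noetherianity, the Nullstellensatz, and Krull intersection is more detailed than the paper, but the strategy is the same.

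The divergence is in the non-strict case. The paper does \emph{not} run Proposition~\ref{prop:algebra_morphism} directly on a general $W_K^n(\rho)/I$; instead it reduces to the strict case by writing any non-strict dagger affinoid algebra as a filtered direct limit of strict ones and any algebra morphism as a morphism of such direct systems (again citing \cite{Bam}, Section 3.2), so that $\phi$ is a colimit of bounded maps and hence bounded. Your uniform treatment requires, in particular, that $\dim_K B/\fm < \infty$ for every maximal ideal of a non-strict dagger affinoid algebra; over a non-Archimedean $K$ with $\rho \notin \sqrt{|K^\times|}^{\,n}$ this Nullstellensatz-type statement is exactly the sort of thing that can fail for non-strict affinoid algebras, and the structural results you want to cite from \cite{Bam} are established there for the strict case. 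So either you must supply the Nullstellensatz and closedness-of-ideals arguments directly for arbitrary polyradii, or adopt the paper's two-step structure and handle the non-strict case by the direct-limit reduction.
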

\begin{proof}
	If $\phi: A \to B$ is an algebra morphism between strict $K$-dagger affinoid algebras then we can apply Proposition \ref{prop:algebra_morphism} choosing as family $\cI$ the family of all powers of maximal ideals of $B$. The only non-trivial fact to check for applying Proposition \ref{prop:algebra_morphism} is the requirement that all the elements of $\cI$ must be bornologically closed in $B$ and their preimages bornologically closed in $A$. But in Section 3.2 of \cite{Bam} is proved that all ideals of dagger affinoid algebras are bornologically closed, hence it follows that $\phi$ is bounded.

The non-strict case can be reduced to the strict case noticing that any non-strict dagger affinoid algebra can be written as a direct limit of strict ones and that every algebra morphism can be written as a morphism of direct systems of algebras, as explained in Section 3.2 of \cite{Bam}. Therefore, every morphism between non-strict dagger affinoid algebras can be written as a direct limit of bounded ones, hence it is bounded.
\end{proof}

We conclude this overview of applications of the bornological Closed Graph Theorem by saying that the last proposition can be generalized to encompass a more general class of bornological algebras used in analytic geometry: Stein algebras and (at least a big subclass of) quasi-Stein algebras, both dagger and non-dagger. The arguments for showing the boundedness of algebra morphisms for this class of bornological algebras become more involved and do not fit in this discussion. The reader can refer to \cite{BaBeKr} for such a study.


\begin{thebibliography}{A}

\bibitem{Bam} Bambozzi, F., "On a generalization of affinoid varieties", Ph.D. thesis, University of Padova, 2013, available at http://arxiv.org/pdf/1401.5702.pdf.
	\bibitem{BaBe} Bambozzi, F., Ben-Bassat, O., "Dagger Geometry as Banach Algebraic Geometry", preprint.
	\bibitem{BaBeKr} Bambozzi, F., Ben-Bassat, O., Kremnizer, K.,  "Stein Domains in Banach Algebraic Geometry", preprint.
\bibitem{BGR} Bosch, S., G\"{u}ntzer, U., Remmert, R., {Non-Archimedean analysis. A systematic approach to rigid analytic geometry,} Springer, 1984.
\bibitem{CAM} C\'amara, A., "Interaction of Topology and Algebra in Arithmetic Geometry." (2013).
\bibitem{DUR} Durov, N. "New approach to Arakelov geometry.", arXiv preprint arXiv:0704.2030 (2007).
\bibitem{DW} De Wilde, M. "Ultrabornological spaces and the closed graph theorem." Bull. Soc. Roy. Sci. Liege 40 (1971): 116-118.
\bibitem{G} Gach, F., "A note on closed graph theorems.", Acta Math. Univ. Comenianae 75.2 (2006): 209-218.
\bibitem{GJ} Gilsdorf, T., E., and Jerzy K. "On some non-archimedean closed graph theorems." LECTURE NOTES IN PURE AND APPLIED MATHEMATICS (1997): 153-158.
\bibitem{H2} Schiffmann, Jacquet, Ferrier, Gruson, Houzel, "Seminaire Banach", Lecture Notes in Mathematics 277, Edited by C. Houzel, Springer-Verlag, 1972.
\bibitem{H} Hogbe-Nlend, H., "Bornologies and functional analysis: introductory course on the theory of duality topology-bornology and its use in functional analysis". Vol. 26. Elsevier, 1977.
\bibitem{Hog2} Hogbe-Nlend, H., "Th\'eorie des bornologies et applications.", Springer, 1971.
\bibitem{Hog3} Hogbe-Nlend, H., and Moscatelli, V. B., "Nuclear and conuclear spaces.", Elsevier, 2011.
\bibitem{J} Jarchow, H. "Locally convex spaces.", Stuttgart: BG Teubner, 1981.
\bibitem{M} Meyer, R., "Local and Analytic Cyclic Homology", European Mathematical Society, 2007.
\bibitem{M2} Meyer, R., "Smooth group representations on bornological vector spaces.", Bulletin des sciences mathematiques 128.2 (2004): 127-166.
\bibitem{M3} Meyer, R. "Bornological versus topological analysis in metrizable spaces.", Contemporary Mathematics 363 (2004): 249-278.
\bibitem{PGS} Perez-Garcia, C., and Schikhof, W. H., 
	"Locally convex spaces over non-Archimedean valued fields." Cambridge University Press, 2010.
\bibitem{POPA} Popa, N. "Le th\'eoreme du graphe (b)-ferm\'e." CR Acad. Sci. Paris S\'er. A–B 273 (1971): A294-A297.
\bibitem{WAEL} Waelbroeck, L. "Topological vector spaces and algebras.", Berlin: Springer-Verlag, 1971.
\bibitem{WAEL2} Waelbroeck, L., Guy N. "Bornological quotients.", Classe des sciences, Académie Royale de Belgique, 2005.

\end{thebibliography}
\end{document}